\documentclass[11pt]{amsart}

\usepackage[T1]{fontenc}
\usepackage{amsmath,amssymb,amsthm}
\usepackage{tikz-cd}
\usepackage{microtype}
\usepackage[hidelinks]{hyperref}

\newtheorem{theorem}{Theorem}[section]
\newtheorem{lemma}[theorem]{Lemma}
\newtheorem{corollary}[theorem]{Corollary}

\theoremstyle{definition}
\newtheorem{definition}[theorem]{Definition}
\newtheorem{assumption}[theorem]{Assumption}

\theoremstyle{remark}
\newtheorem*{remark}{Remark}

\title{Donaldson-Sun Theory in the Conic Case}
\author{Arka Karmakar}
\date{\today}

\begin{document}

\maketitle

\begin{abstract}

We extend the Donaldson--Sun theory of metric tangent cones to non-collapsing
Gromov--Hausdorff limits of conical K\"ahler--Einstein pairs whose boundary
coefficients lie in a fixed finite subset of $\mathbb Q$, and prove uniqueness of the log metric tangent cone. Furthermore, under a mild lc compatibility condition, we relate this with the Li-Xu and Li-Liu-Xu stable degeneration machinery. 

We also construct polarized smooth Kähler metrics on $\mathbf{CP}^2$ with a uniform lower Ricci bound and volume non-collapsing, whose limit has nonunique tangent cones, showing that the Kähler-Einstein assumption is crucial for rigidity. \end{abstract}

\section{Introduction}

The study of Gromov-Hausdorff limits of K\"ahler-Einstein manifolds is a
central meeting point of metric geometry and K-stability. Cheeger-Colding
theory describes the general structure of non-collapsed Ricci-limit spaces,
while Donaldson-Sun show that K\"ahler-Einstein limits carry substantially
more algebraic structure: the limit is a normal projective variety, its metric
tangent cones are normal affine varieties, and every tangent cone is obtained
from the local germ by a two-step algebraic degeneration
\cite{DSI,DSII}. More precisely, if $p\in X_\infty$ and $\nu$ is the metric
order on $\mathcal O_{X_\infty,p}$, then $ W=\operatorname{Spec}\operatorname{gr}_\nu \mathcal O_{X_\infty,p}$ is an intermediate affine cone, and a metric tangent cone $C(Y)$ is obtained
from $W$ by a further equivariant degeneration.

From the algebro-geometric point of view, Donaldson--Sun theory realizes it by
an algebraic degeneration of the local germ. In the conic setting one must, in
addition, recover the boundary divisor and follow it through this degeneration.

Conical K\"ahler-Einstein metrics play a basic role in the proof of the
Yau-Tian-Donaldson conjecture through the work of
Chen-Donaldson-Sun \cite{CDSI,CDSII,CDSIII}. Their compactness theory,
however, does not by itself give a logarithmic version of the
Donaldson-Sun tangent-cone theorem. The new difficulty is that we need sharper handling of the codimension $4$ singularities that are formed in the Gromov--Hausdorff limits. 

Our goal is to establish the corresponding logarithmic tangent-cone picture,
recovering the boundary from the metric and following it through the algebraic
degenerations leading to the tangent cone.

Under the hypotheses stated in Section~2, our main result is the following.

\begin{theorem}[Logarithmic Donaldson-Sun theorem]
\label{thm:main-log-ds}
Let $(X_i,\Delta_i,\omega_i) \longrightarrow (X_\infty,\omega_\infty)$
be a non-collapsing Gromov--Hausdorff limit of conical
K\"ahler--Einstein pairs of complex dimension $n$, with coefficients in a
fixed finite subset of $\mathbb Q$ and satisfying the uniform assumptions of
Section~2. Then the following hold.

\begin{enumerate}
\item The metric determines an effective rational Weil divisor $ \Delta_\infty=\sum_a(1-\beta_a)D_a$ on the normal projective variety $X_\infty$. The divisor
$K_{X_\infty}+\Delta_\infty$ is $\mathbb Q$-Cartier, and
$\omega_\infty$ satisfies the weak conical K\"ahler--Einstein equation on
$(X_\infty,\Delta_\infty)$.

\item If $(X_\infty,r_j^{-2}\omega_\infty,p) \longrightarrow (C(Y),\omega_C,o)$
is a metric tangent cone, then $C(Y)$ is a normal affine variety carrying a
canonical effective rational Weil divisor $\Delta_C$. The pair
$(C(Y),\Delta_C)$ is log $\mathbb Q$-Gorenstein, and $\omega_C$ is a weak
conical Ricci-flat K\"ahler cone metric on
$(C(Y),\Delta_C,\xi)$.

\item The metric filtration of $\mathcal O_{X_\infty,p}$ is finitely
generated. If  $W=\operatorname{Spec}\operatorname{gr}_\nu \mathcal O_{X_\infty,p} $
then the boundary has an associated graded cycle $\Delta_W$, and there is a
logarithmic two-step degeneration
$ (X_\infty,\Delta_\infty,p)
        \rightsquigarrow
        (W,\Delta_W,\xi)
        \rightsquigarrow
        (C(Y),\Delta_C,\xi).
$
The boundary cycle produced by the second algebraic degeneration agrees, with
generic multiplicities, with the metric blow-up of $\Delta_\infty$.

\item The log affine cone $(C(Y),\Delta_C,\xi)$ is independent of the
subsequence defining the tangent cone. Consequently the metric tangent cone at
$p$ is unique.
\end{enumerate}
\end{theorem}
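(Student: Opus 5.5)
The plan is to follow the architecture of Donaldson--Sun \cite{DSI,DSII}, adding the boundary data at each stage. The one genuinely new analytic input is a partial $C^0$ estimate for conical pairs. Using it, I would prove (1) and (2) together by a Hörmander $L^2$ argument, then get (3) from the metric order $\nu$. Statement (4) would follow from a rigidity argument for the second degeneration, run inside a log K-stability framework.

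\textbf{Step 1 (partial $C^0$ and the boundary on the limit).} Fix $m$ so that $m(1-\beta)\in\mathbb Z$ for every coefficient in the fixed finite set. Twisting by $-m(K_{X_i}+\Delta_i)$, I would prove a uniform lower bound on the Bergman density. Holomorphic sections are built by grafting Gaussian-type sections from tangent cones, as in Donaldson--Sun, using the uniform hypotheses of Section~2 to control the conical Hörmander estimate near $\Delta_i$. This yields uniform projective embeddings $X_i\hookrightarrow\mathbf{CP}^{N}$, with $X_\infty$ the normal limit variety.

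The divisors $\Delta_i=\sum(1-\beta_a)D_{i,a}$ have bounded degree and finitely many possible coefficients. Hence the $D_{i,a}$ converge as cycles, and I define $\Delta_\infty$ as the limit cycle, grouped by coefficient. To see that this limit is determined by the metric, I would identify the codimension-two singular stratum of $X_\infty$. By Cheeger--Colding, its tangent cones are $\mathbb C^{n-1}\times C_{2\pi\beta}$, and the cone angle recovers $\beta_a$. The $\mathbb Q$-Cartier property of $K_{X_\infty}+\Delta_\infty$ comes from the limiting sections of $m(K+\Delta)$. The weak conical Kähler--Einstein equation comes from local potentials, whose Monge--Ampère measures converge away from a set of codimension at least four.

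\textbf{Step 2 (tangent cones).} The same Hörmander construction, applied on the rescaled spaces $(X_\infty,r_j^{-2}\omega_\infty,p)$ with the trivial bundle and Gaussian weights, gives the following. The tangent cone $C(Y)$ is a normal affine variety, its coordinate ring is generated by $\xi$-homogeneous functions, and $\Delta_C$ is the cycle limit of the rescaled $\Delta_\infty$. The log $\mathbb Q$-Gorenstein property and the weak Ricci-flat cone equation are proved exactly as in Step~1, using a local nonvanishing section of $m(K+\Delta)$.

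For (3), $\nu$ is defined by the growth rates of holomorphic functions. Finite generation follows as in \cite{DSII}: the spectrum of $\xi$ on $C(Y)$ is discrete, and the embedding dimension is bounded. The boundary cycle $\Delta_W$ is the initial cycle $\operatorname{in}_\nu(\Delta_\infty)$, obtained from the ideal of each $D_a$ via $\operatorname{gr}_\nu$. The degeneration $W\rightsquigarrow C(Y)$ is the $\xi$-equivariant one, produced by the limiting sequence of embeddings. By flatness of the family of cycles, its boundary cycle matches the metric blow-up of $\Delta_\infty$ with generic multiplicities.

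\textbf{Step 3 (uniqueness), the main obstacle.} I would show three things. First, $\nu$ coincides with the Li--Xu minimizer of the log normalized volume of $(X_\infty,\Delta_\infty,p)$. This holds because the conical Ricci-flat cone metric on $(C(Y),\Delta_C,\xi)$ gives log K-polystability, and hence $\xi$ minimizes the volume on $(W,\Delta_W)$. Second, the minimizer is unique by Li--Xu and Xu--Zhuang, so $(W,\Delta_W,\xi)$ is independent of the subsequence. Third, by Li--Wang--Xu, the K-polystable degeneration of a K-semistable log Fano cone is unique up to isomorphism. So $(C(Y),\Delta_C,\xi)$ is unique, and hence so is the metric tangent cone.

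The hard part is the analytic input in Step~1. Gradient and $L^2$ estimates across codimension-four singularities where components of $\Delta_i$ collide would have to be justified, using only the uniform assumptions. A second worry is the logarithmic rigidity step, namely whether Donaldson--Sun's argument that the second degeneration is determined by $W$ holds with boundary. My first attempt would be to transport their Hilbert-scheme and moment-map argument, with $\Delta$ added as a cycle.
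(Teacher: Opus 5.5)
There is a genuine gap in Step~3. Every result you invoke there presupposes that $(W,\Delta_W)$ is a klt log Fano cone and that $W\rightsquigarrow C(Y)$ is a special test configuration. Concretely, $K_W+\Delta_W$ and $K_{\mathcal W/\mathbb A^1}+\mathcal D$ must be $\mathbb Q$-Cartier. This applies to Li--Xu's criterion (a valuation minimizes $\widehat{\operatorname{vol}}$ iff its associated graded ring is a K-semistable log Fano cone), to Xu--Zhuang uniqueness, to passing K-semistability from $(C(Y),\Delta_C,\xi)$ back to $(W,\Delta_W,\xi)$ via \cite[Proposition~5.5]{LLXGuidedTour}, and to Li--Wang--Xu uniqueness of the K-polystable degeneration. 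Comparing volumes on $X_\infty$ and on $W$ also needs $A_{(X_\infty,\Delta_\infty)}(\nu)=A_{(W,\Delta_W)}(\operatorname{wt}_\xi)$.

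Nothing in your Steps~1--2 supplies these facts. $\Delta_W$ is only an initial cycle of height-one primes, which need not be $\mathbb Q$-Cartier. There is also no reason for $\operatorname{gr}_\nu$ to commute with the reflexive hulls $\omega^{[m]}(m\Delta)$. This is exactly Assumption~\ref{ass}: the paper cannot prove it, and obtains the Li--Xu/Li--Wang--Xu picture only conditionally in Section~5. So your argument breaks at the ``hence'' in ``log K-polystability, and hence $\xi$ minimizes the volume on $(W,\Delta_W)$''. Incidentally, $\nu$ and $W$ are already canonical, since $\nu(f)$ is a limit over all scales. Uniqueness of the minimizer is therefore not what is at stake; only the second degeneration is.

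The paper proves (4) unconditionally by the route you list only as a fallback: Donaldson--Sun's slice argument, run in the incidence space $\mathcal P\subset\operatorname{Hilb}^T_h\times\operatorname{Chow}^T_{n-1,d}$. This needs several inputs absent from your sketch:
\begin{enumerate}
\item Constancy of the Hilbert function on $\mathcal C_p$, via the grafting lemma. Its cutoffs must now handle the real codimension-two conical stratum, not just codimension four. This places all log tangent cones in one $\mathcal P$, inside $\overline{G_\xi\cdot x_W}$.
\item Closedness of $G_\xi\cdot x_C$ (from log polystability) and reductivity of its stabilizer (log Matsushima).
\item Uniqueness of conical Ricci-flat cone metrics on a fixed $(C,\Delta_C,\xi)$.
\item The fact that the point $x_C$ carries the metric boundary, i.e.\ Theorem~\ref{thm:alg-met-boundary-agree}.
\end{enumerate}
Your ``by flatness of the family of cycles'' does not give the last point, because a Chow limit of $g_\alpha\Gamma_W$ has no a priori relation to a metric blow-up. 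The paper uses three ingredients for it:
\begin{enumerate}
\item The $g_\alpha$ are change-of-adapted-basis maps, so the equations of $g_\alpha D_{j,W}$ are leading terms of the rescaled equations of $D_j$.
\item Coefficients are matched at generic points of each limiting component through the transverse holonomy/Monge--Amp\`ere exponent.
\item Supports converge in the Kuratowski sense (Lemma~\ref{lem:boundary-support-convergence}).
\end{enumerate}

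The same holonomy input is missing from your Step~1. When branches merge, the cone angle records $\sum_a m_a(1-\beta_a)$ (Lemma~\ref{lem:tubular-trapping}), not an individual $\beta_a$. You must also show that the support of your cycle limit is exactly the closure of $S_2\setminus S_4$.

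Finally, Step~1 follows a different route from the paper. You prove a conical partial $C^0$ estimate directly, and leave open its H\"ormander and gradient estimates near non-snc boundaries. The paper instead smooths each pair to polarized metrics with a uniform bound $\operatorname{Ric}\geq -C$, applies Liu--Sz\'ekelyhidi, and then defines $\Delta_\infty$ intrinsically from $S_2\setminus S_4$ rather than as a cycle limit.
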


In particular, it is not a priori clear from the metric construction alone that a
tangent cone arises from an algebraic degeneration of the local germ, let alone
that such a tangent cone is unique. Polarization, volume noncollapsing, and a uniform lower
Ricci bound alone do not imply uniqueness of metric tangent cones.  The
following result is proved in Appendix~\ref{app:nonunique-tangent-cones}.

\begin{theorem}[Failure of tangent-cone uniqueness under a lower Ricci bound]
There exists a sequence of smooth polarized K\"ahler surfaces, with
polarization in one fixed integral K\"ahler class, a uniform Ricci lower
bound, and uniform noncollapse, whose Gromov--Hausdorff limit has a point
with nonunique metric tangent cones.
\end{theorem}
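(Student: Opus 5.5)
The idea is to build a single smooth Kähler surface metric whose tangent cone at one point oscillates between two different cones, and then approximate it by smooth polarized metrics on $\mathbf{CP}^2$ in the fixed class $c_1(\mathcal O(1))$. Nonuniqueness phenomena of this type are known in the Riemannian setting. Examples in the spirit of Perelman, Cheeger--Colding and Colding--Naber give limits of manifolds with $\mathrm{Ric}\ge -(n-1)$ and noncollapsed volume whose tangent cones at a point vary along a scale-dependent family. The task here is to realize such an example within Kähler geometry, with controlled cohomology class.

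\textbf{Local construction.} I will work with $U(2)$-invariant Kähler metrics near the origin of $\mathbb C^2$ (later, near a point of $\mathbf{CP}^2$). Such a metric is given by a Kähler potential $\phi(t)$, $t=\log|z|^2$. By Calabi's ansatz, the metric is encoded by the moment profile $\tau=\phi'(t)$ and the function $\varphi(\tau)=\phi''(t)$. In these terms the Ricci curvature is an explicit second-order expression in $\varphi$, and convexity of $\phi$ becomes positivity of $\varphi$.

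Consider the flat cones $C_\gamma=\mathbb C^2$ with $\varphi_\gamma(\tau)=\gamma\tau$, $0<\gamma\le1$. These are metric cones over Berger-type squashed spheres (cone angle $2\pi\gamma$ along the Hopf direction). Each $C_\gamma$ has nonnegative Ricci curvature, since $\mathrm{Ric}\ge0$ there for $\gamma\le1$, and volume ratio bounded below. I will construct $\varphi$ on $(0,\tau_0]$ that interpolates slowly in $\log\tau$ between $\gamma=1/2$ and $\gamma=1$ infinitely often:
\[
\varphi(\tau)=\gamma(\log\log(1/\tau))\,\tau ,
\]
with $\gamma(s)$ a smooth function oscillating between $1/2$ and $1$ with derivatives tending to $0$. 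The slow variation makes the Ricci error scale like $|\gamma'|/(\tau\,\text{scale})$, which in rescaled units tends to zero. Hence $\mathrm{Ric}\ge -K$ holds after adding a bounded correction. Because the rescalings $r^{-2}g$ along the scales $r_j$ where $\gamma=1/2$, respectively $\gamma=1$, converge to $C_{1/2}$, respectively $C_1$, the tangent cones at $0$ are nonunique; in fact they form the whole family $C_\gamma$, $\gamma\in[1/2,1]$.

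The conical singularity at $0$ must then be resolved by smooth metrics. I will truncate, setting $\varphi_i=\varphi$ for $\tau\ge\epsilon_i$ and gluing in a smooth flat end $\varphi=\tau+O(\tau^2)$ near $\tau=0$. Since the gluing happens at a scale where $\gamma$ is nearly constant, the Ricci lower bound and the volume noncollapse survive, and $\varphi_i\to\varphi$ in Gromov--Hausdorff sense.

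\textbf{Globalization.} Away from a small ball, I will take the Fubini--Study metric. The $U(2)$-invariant potential is patched to $\log(1+|z|^2)$ near $\tau$ close to the Fubini--Study profile. The key point is that the cohomology class is determined by the total moment range $\tau\in[0,1]$. So after rescaling $\tau$ the class stays $c_1(\mathcal O(1))$ exactly, and the fixed integral polarization comes for free. Noncollapse follows from Bishop--Gromov together with the uniform lower bound on $\gamma$.

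\textbf{Main obstacle.} I expect the Ricci lower bound through the transition regions to be the hardest step, in particular positivity of the Ricci form in the Hopf-fibre direction when $\gamma$ varies. The Ricci components are roughly $-\tfrac12(\varphi''\tau\cdots)$. I will first check that for $\varphi=\gamma\tau$ with $\gamma$ depending on $\log\log$, every term involving $\gamma',\gamma''$ carries a factor $\tau\cdot(\tau\log(1/\tau))^{-1}$, which is integrable against the metric scale. If this estimate fails, the fallback is to let $\gamma$ vary even more slowly, taking $\gamma$ to be a function of $\log\log\log(1/\tau)$.
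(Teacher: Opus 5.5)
\textbf{There is a genuine gap: the construction cannot work.} The problem is structural, not a matter of letting $\gamma$ vary more slowly.

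Within your $U(2)$-invariant ansatz the only K\"ahler cones are the $C_\gamma$, with potential $|z|^{2\gamma}$. The K\"ahler condition ties the Hopf fibre length $2\pi\gamma$ to the base radius. The link has volume $\gamma^2\operatorname{Vol}(S^3)$, so $C_\gamma$ has volume density $\gamma^2$.

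Now take a point $p$ of a noncollapsed limit of metrics with $\operatorname{Ric}\geq -K$. Bishop--Gromov monotonicity makes $\operatorname{Vol}(B_r(p))/r^4$ converge as $r\to0$. Colding's volume convergence identifies this limit with the volume density of \emph{every} tangent cone at $p$. So all tangent cones at $p$ have the same density, and a tangent-cone set containing both $C_{1/2}$ and $C_1$ is impossible. Indeed $C_1=\mathbb C^2$ alone would force $p$ to be regular.

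You can see exactly where your estimate fails. Take the Ricci potential $G=2t-\log(\phi'\phi'')$, with $\phi''=\gamma\phi'$ and $L=\log(1/\tau)$. Then $G''=2\gamma\gamma'/L+O(L^{-2})$, while the metric coefficient in the radial--Hopf direction is $\phi''=\gamma\tau$. The Ricci eigenvalue there is therefore $2\gamma'/(L\tau)+O(L^{-2}\tau^{-1})$. This is of size $|\gamma'|/(r^2\log(1/r))$ with the sign of $\gamma'$, hence unbounded below on every stretch where $\gamma$ decreases toward the origin. ``Small in rescaled units'' only gives $\operatorname{Ric}\geq-\epsilon(r)r^{-2}$ with $\epsilon(r)\to0$. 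That is not a uniform lower bound, no bounded correction can fix it, and the $\log\log\log$ fallback only changes $\epsilon(r)$.

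\textbf{The missing idea is that the link deformation must be volume-preserving}, as in Colding--Naber. Your modulation changes the volume, so it enters the radial Ricci curvature at first order in its speed. A volume-preserving deformation enters only quadratically, and a small radial concavity can pay for it.

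In the K\"ahler setting this forces you to give up $U(2)$-symmetry. The paper keeps the Hopf fibration, the Reeb field and the scale $a<1$ fixed. It varies only the transverse metric $a\omega_0+t\sqrt{-1}\partial\bar\partial\psi$ on $\mathbf{CP}^1$ inside a fixed K\"ahler class. The contact volume is then constant, while the cones are pairwise nonisometric, as detected by the scalar-curvature variance of the transverse metric. In the log-potential $F=as+K\log\sigma+T(\log\sigma)\psi$, the term $K\log\sigma$ supplies a radial buffer $2K\sigma^{-2}$. This dominates the $O(\sigma^{-2})$ errors coming from the phase, and $a<1$ gives the horizontal Ricci margin.

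A secondary point concerns your cap step, which needs more than gluing where $\gamma$ is nearly constant. A fixed transition profile shrunk by $\rho$ has Ricci lower bound of order $\rho^{-2}$ unless its Ricci curvature is essentially nonnegative. The paper arranges this by prescribing a convex, nondecreasing radial Ricci potential and solving one matching condition.
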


Coming back to the proof of ~\ref{thm:main-log-ds}, The first issue is to identify which parts of the Cheeger--Colding singular set
actually come from the conical boundary. The finite set of cone angles gives a
uniform gap in volume density, so the metric regular set is a fixed
$\epsilon$-regular set. The higher stratum $S_4$ has real codimension at least
four, while at points of $S_2\setminus S_4$ the Liu-Székelyhidi charts,
together with a transverse holonomy calculation, trap the approximating
divisors near the axis of a cone
$\mathbb C_\beta\times\mathbb C^{n-1}$ and give a uniform bound on their
transverse degree. This separates the divisorial conical locus from the higher
metric singularities and produces the boundary $\Delta_\infty$. The same
analysis under blow-up gives $\Delta_C$ on a tangent cone. A point that will be
important later is that the rescaled boundaries converge as weighted analytic
cycles.

The codimension-two cutoff issue is already present in the single-divisor conic
setting of Chen-Donaldson-Sun \cite[Sections~2.1--2.3]{CDSII}, where tangent
cones are shown to be good in the capacity sense. In our more general log-pair
setting, we use the local description of $S_2\setminus S_4$ to obtain the same
cutoff input for the Donaldson--Sun $\bar\partial$/grafting argument.
This gives rigidity of the holomorphic spectrum on the
space of tangent cones and hence the metric filtration and the intermediate
cone $W=\operatorname{Spec}(\operatorname{gr}_\nu(
\mathcal O_{X_\infty,p}))$.

The other new point is to follow the boundary through this filtration. Since
$\Delta_\infty$ is only a Weil divisor and its components may merge under
degeneration, ambient Hilbert convergence alone does not determine the limiting
boundary. We track the integral divisorial data associated with
$N\Delta_\infty$ through the adapted bases. Their convergence under the Donaldson--Sun degeneration, followed by Poincar\'e--Lelong (or a transverse Rouch\'e argument), identifies the algebraic boundary on the tangent cone with the metric blow-up boundary
$N\Delta_C$.

Once this comparison is established, the final uniqueness argument is close to
Donaldson--Sun: the possible log tangent cones lie in a common Hilbert--Chow
incidence space, and the Luna-slice argument is applied to the pair rather than
only to the underlying affine cone. This shows that the log affine tangent cone
is unique, and uniqueness of the conical Ricci-flat metric then gives uniqueness
of the metric tangent cone.

There is a natural further comparison with normalized-volume stable
degeneration. Koll\'ar's KDiv space provides a suitable parameter space for divisors in a family. The remaining issue is the log-canonical sheaf. To obtain a genuine
$\mathbb Q$-Gorenstein degeneration one must know that the relevant reflexive
log-pluricanonical sheaves commute with taking the associated graded object.
Koll\'ar's hull theory gives locally closed strata on which this compatibility
holds, but we do not prove that the specific Donaldson--Sun degenerations remain
in such a stratum. We therefore isolate this compatibility as an additional
assumption in the final section. Under it, the construction above agrees with
the Li--Xu and Li--Wang--Xu stable-degeneration picture.
\section*{Acknowledgements}

The author thanks his advisor Prof. Gábor Székelyhidi for his guidance and support, and also for suggesting this topic. The author also thanks
Colin Fan and Prof. Yuchen Liu for helpful discussions on algebraic geometry; especially the section on special test configurations. The author also acknowledges the use of GPT-5.6 Sol for proofreading and for assistance in developing the construction in Appendix~\ref{app:nonunique-tangent-cones}. The author takes full responsibility for all mathematical claims and any errors.

\section{Preliminaries}

Throughout the paper we use the log-Fano normalization
\[
        [\omega]=c_1\bigl(-(K_X+\Delta)\bigr),
\]
so that on the complement of the boundary the Einstein equation is
$\operatorname{Ric}(\omega)=\omega$.

\begin{definition}[Conical K\"ahler--Einstein pair]
Let $X$ be a smooth compact K\"ahler manifold and let
$D=\bigcup_a D_a$ be a finite union of divisors. Fix
$\beta_a\in(0,1)$ and set
\[
        \Delta=\sum_a(1-\beta_a)D_a.
\]
Assume that $-(K_X+\Delta)$ is a K\"ahler $\mathbb Q$-class.

A \emph{conical K\"ahler--Einstein metric} on $(X,\Delta)$ is a closed
positive $(1,1)$-current $\omega$ with locally bounded potentials, smooth and
K\"ahler on $X\setminus D$, for which there are
\[
        \omega_0\in c_1\bigl(-(K_X+\Delta)\bigr),
        \qquad
        \phi\in\operatorname{PSH}(X,\omega_0)\cap L^\infty(X),
\]
with $\omega=\omega_0+\sqrt{-1}\partial\bar\partial\phi$, and such that on
$X\setminus D$,
\[
        \omega^n
        =
        \omega_0^n e^{-\phi+F}
        \prod_a |s_a|_{h_a}^{2\beta_a-2}.
\]
Here $s_a$ is a defining section of $D_a$, $h_a$ is a Hermitian metric on
$\mathcal O_X(D_a)$. If $|e|_h^2=e^{-\psi}$ in a local holomorphic frame, we write
$\Theta(h):=\sqrt{-1}\,\partial\bar\partial\psi$ for the Chern curvature of $h$.
$F$ is locally bounded and satisfies
\[
        \sqrt{-1}\partial\bar\partial F
        =
        \operatorname{Ric}(\omega_0)-\omega_0
        -\sum_a(1-\beta_a)\Theta(h_a)
\]
on $X\setminus D$.
\end{definition}

For this paper, we consider a family $\mathcal F$ of conical K\"ahler--Einstein
pairs $(X_i,\Delta_i,\omega_i)$, where $\Delta_i=\sum_a (1-\beta_{i,a})D_{i,a}$
satisfying the following assumptions:
\begin{itemize}
    \item There exists an integer $N_{\mathcal F}$ such that
    $\beta_{i,a}\in \frac{1}{N_{\mathcal F}}\mathbb Z\cap(0,1)$ for every
    component $D_{i,a}$ appearing in the family. Equivalently, all cone angles
    $2\pi\beta_{i,a}$ have denominator dividing $N_{\mathcal F}$, and they are
    uniformly bounded away from $0$ and $2\pi$.

    \item There are constants $D,V>0$ such that
    $\operatorname{diam}(X_i,\omega_i)\leq D$ and
    $\operatorname{Vol}(X_i,\omega_i)\geq V$ for every $i$.
\end{itemize}

Using the polarized approximation lemma below, each member of $\mathcal F$ can
be approximated in the Gromov-Hausdorff sense by smooth polarized K\"ahler
manifolds with a uniform Ricci lower bound. Hence by a diagonal argument, the usual Gromov compactness and Ricci-limit-space machinery applies after approximation. We therefore pass to a subsequence and
write $(X_i,\omega_i)\longrightarrow (X_\infty,\omega_\infty)$ in the Gromov--Hausdorff sense. The limit $X_\infty$ is the main object of the
paper.

\begin{lemma}[Polarized approximation and algebraicity of the limit]
\label{lem:polarized-approximation}
For any $(X,\Delta,\omega)\in\mathcal F$, there exists a sequence of smooth
K\"ahler metrics $\omega^{(j)}$ on the smooth manifold $X$ such that, writing
\[
        L:=-N_{\mathcal F}(K_X+\Delta),
\]
we have
\begin{itemize}
    \item $L$ is a genuine holomorphic line bundle;
    \item $c_1(L)=N_{\mathcal F}[\omega^{(j)}]$;
    \item $\operatorname{Ric}(\omega^{(j)})\geq -C\omega^{(j)}$, for a constant
    $C$ independent of $j$ and of the pair in $\mathcal F$;
    \item $(X,\omega^{(j)})\to (X,\omega)$ in the Gromov--Hausdorff sense.
\end{itemize}
Consequently, after passing to a subsequence, the family
$(X_i,\omega_i)$ has a Gromov--Hausdorff limit $X_\infty$. Moreover
$X_\infty$ is homeomorphic to a normal projective variety, and the convergence
can be realized as algebraic convergence inside a fixed projective space.
\end{lemma}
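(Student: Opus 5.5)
The plan has four steps: the integrality of $L$, a smoothing of the conical metric, the Gromov--Hausdorff convergence, and then algebraicity of the limit.

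\textbf{Integrality of $L$.} Since every $\beta_{a}\in\frac{1}{N_{\mathcal F}}\mathbb Z$, each coefficient $N_{\mathcal F}(1-\beta_a)$ is an integer. Hence $N_{\mathcal F}\Delta$ is an integral divisor and $L=-N_{\mathcal F}K_X-N_{\mathcal F}\Delta$ is a genuine line bundle. It is ample because $-(K_X+\Delta)$ is a K\"ahler class.

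\textbf{Smoothing the metric.} For the metrics $\omega^{(j)}$ I would use the standard regularization from Campana--Guenancia--P\u{a}un and Chen--Donaldson--Sun. Solve the Monge--Amp\`ere equations
\[
\omega_{\epsilon}^n=\omega_0^n e^{-\phi_\epsilon+F}\prod_a\bigl(|s_a|_{h_a}^2+\epsilon\bigr)^{\beta_a-1},
\]
with $\epsilon=\epsilon_j\to0$ and $\omega_\epsilon=\omega_0+\sqrt{-1}\partial\bar\partial\phi_\epsilon$. This stays in the class $c_1(-(K_X+\Delta))$, so $c_1(L)=N_{\mathcal F}[\omega^{(j)}]$. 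A direct computation gives
\[
\operatorname{Ric}(\omega_\epsilon)=\omega_\epsilon+\sum_a(1-\beta_a)\Bigl(\sqrt{-1}\partial\bar\partial\log(|s_a|^2+\epsilon)+\Theta(h_a)\Bigr).
\]
The bracketed term is bounded below by a fixed multiple of $-\Theta(h_a)$, because $\sqrt{-1}\partial\bar\partial\log(|s|^2+\epsilon)\ge -\frac{|s|^2}{|s|^2+\epsilon}\Theta(h)$. To get $\operatorname{Ric}\ge -C\omega_\epsilon$ I therefore need $\omega_\epsilon\ge c\,\omega_0$.

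\textbf{Main obstacle: uniform constants.} Securing $\omega_\epsilon\ge c\,\omega_0$ is the hardest step, and the constant must be independent of $j$ and of the pair. I would first use Ko\l odziej's theorem to get uniform $C^0$ bounds on $\phi_\epsilon$, since the density lies in $L^p$ uniformly for $\beta_a$ bounded away from $0$. I would then apply Chern--Lu with the reference metric $\omega_0$, using a uniform lower bound on its holomorphic bisectional curvature, to get $\operatorname{tr}_{\omega_\epsilon}\omega_0\le C$.

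Uniformity over $\mathcal F$ is subtle because the complex manifolds vary. I would handle it by applying the construction to each fixed pair with a constant $C$ depending only on the Ricci lower bound. The Chern--Lu argument can also be run with reference metric $\omega$ itself, using $\operatorname{Ric}(\omega)=\omega$ and Guenancia's bisectional curvature bound for model conical metrics. Alternatively, if a pair-dependent $C$ is all that is available, I would note that the diagonal argument only needs a Ricci lower bound $-C$ with $C$ uniform along the diagonal sequence. That uniformity can be recovered from $\operatorname{Ric}(\omega_i)=\omega_i$ via the weak-limit version of the Ricci bound (the Bishop--Gromov volume comparison persists).

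\textbf{Convergence and algebraicity.} Gromov--Hausdorff convergence $\omega_\epsilon\to\omega$ follows in three pieces:
\begin{itemize}
\item smooth convergence on compact subsets of $X\setminus D$, from Evans--Krylov estimates;
\item the uniform Ricci and volume bounds, which give uniform diameter bounds;
\item the fact that $D$ has small neighborhoods of small diameter contribution, which is standard via the two-sided estimate $C^{-1}\omega_{\rm cone}\le\omega_\epsilon\le C\omega_{\rm cone}$ of $\epsilon$-regularized cone metrics.
\end{itemize}
A diagonal subsequence then converges to $X_\infty$. Algebraicity follows from Donaldson--Sun's partial $C^0$ estimate, applied to the polarized sequence $(X,L,\omega^{(j)})$ with a Ricci lower bound and noncollapsing. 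By Liu--Sz\'ekelyhidi's extension to the case of a Ricci lower bound, for some fixed power $L^k$ the Bergman embeddings have uniformly controlled $\sup$ and gradient. The $X_i$ therefore embed in a fixed $\mathbf{CP}^{M}$, and the Hilbert-scheme limit of their images is homeomorphic to $X_\infty$ and normal.
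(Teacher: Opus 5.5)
Your overall route matches the paper's: integrality of $L$, smoothing by Monge--Amp\`ere equations, a Chern--Lu bound $\omega^{(j)}\ge c\,\omega_0$ giving the Ricci lower bound, a diagonal sequence, and Liu--Sz\'ekelyhidi's Theorem~1.1 for algebraicity. Your Ricci formula and the inequality for $\sqrt{-1}\partial\bar\partial\log(|s|^2+\epsilon)$ are also correct.

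\textbf{The gap is in the smoothing equation you chose.} The equation $\omega_\epsilon^n=\omega_0^n e^{-\phi_\epsilon+F}\prod_a(|s_a|^2+\epsilon)^{\beta_a-1}$ is a twisted K\"ahler--Einstein equation of \emph{positive} sign, with the unknown in the exponent. This causes two problems.
\begin{enumerate}
\item Its solvability for small $\epsilon$ is not automatic. In Chen--Donaldson--Sun~I it comes from properness of the twisted Ding functionals, which uses the existence of the conic metric and the absence of holomorphic vector fields tangent to the boundary. Pairs in $\mathcal F$ can have nontrivial $\operatorname{Aut}^0(X,\Delta)$; the football $(\mathbf{CP}^1,(1-\beta)(\{0\}+\{\infty\}))$ is an example.
\item Your $C^0$ step is circular. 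The density contains $e^{-\phi_\epsilon}$, so it is uniformly in $L^p$ only once you already know that $\phi_\epsilon$ is bounded below. Ko\l odziej's theorem does not supply that bound, and for equations of this sign it is precisely the hard part.
\end{enumerate}

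\textbf{The fix is to freeze the potential.} This is what the paper means by replacing the singular right-hand side by smooth data. Take smooth $\phi_j\searrow\phi$ with $\omega_0+\sqrt{-1}\partial\bar\partial\phi_j>0$ (B\l ocki--Ko\l odziej), and solve the Calabi--Yau equations
\[
(\omega_0+\sqrt{-1}\partial\bar\partial u_j)^n=c_j\,\omega_0^n e^{-\phi_j+F}\prod_a(|s_a|^2_{h_a}+\epsilon_j)^{\beta_a-1}.
\]
\begin{enumerate}
\item Yau's theorem gives existence.
\item Since $\phi_j\ge\phi\ge -C$, the densities are now genuinely uniformly in $L^p$, so Ko\l odziej gives uniform $C^0$ bounds.
\item Ko\l odziej's stability theorem, together with uniqueness for the limit equation (which $\phi$ solves with $c=1$), gives $u_j\to\phi$.
\item The Ricci form is now $(\omega_0+\sqrt{-1}\partial\bar\partial\phi_j)+\sum_a(1-\beta_a)\bigl(\Theta(h_a)+\sqrt{-1}\partial\bar\partial\log(|s_a|^2+\epsilon_j)\bigr)\ge -C\omega_0$. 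Your Chern--Lu argument therefore goes through unchanged. Note that it needs an \emph{upper} bound on the bisectional curvature of the target $\omega_0$, not a lower bound.
\end{enumerate}

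\textbf{Smaller points.}
\begin{enumerate}
\item A lower Ricci bound together with a fixed total volume does not bound the diameter. With the potential frozen, both the diameter bound and the Gromov--Hausdorff convergence should come from $L^p$-density diameter estimates plus smooth convergence off $D$, as in the cited construction, rather than from a two-sided comparison with the cone model.
\item Your fallback for uniformity over $\mathcal F$, via a ``weak'' Ricci bound on the conic metrics, does not work. Liu--Sz\'ekelyhidi's theorem requires \emph{smooth} approximants with a single Ricci lower bound along the diagonal sequence. The paper gets this by asserting that all constants depend only on $n$, the coefficient set, and the bounds defining $\mathcal F$. You need to argue the same for your Ko\l odziej and Chern--Lu constants, not bypass it.
\end{enumerate}
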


\begin{proof}
The bounded denominator assumption gives
$-N_{\mathcal F}(K_X+\Delta)\in \operatorname{Pic}(X)$, so
$L:=-N_{\mathcal F}(K_X+\Delta)$ is a genuine line bundle.

The smooth approximants are obtained by regularizing the singular Hermitian
metrics and the conical potentials. In the notation of \cite[Section 2]{FGSW}, this
is the metric regularization construction: one replaces the singular
right-hand side by smooth data, solves the corresponding smooth Monge--Amp\`ere
equations, obtains uniform $L^\infty$ and diameter estimates, and recovers
Gromov--Hausdorff convergence to the original singular metric. Thus we obtain
smooth K\"ahler metrics $\omega^{(j)}\in[\omega]$ satisfying the stated Ricci
lower bound and $(X,\omega^{(j)})\longrightarrow (X,\omega)$
in the Gromov--Hausdorff sense.

The constants in this construction depend only on the dimension, the fixed
finite coefficient set, and the uniform geometric bounds defining
$\mathcal F$. In particular, the Ricci lower bound may be chosen uniformly in
both the smoothing parameter and the member of the family.

We now apply this to the sequence $(X_i,\Delta_i,\omega_i)$. For each $i$, choose
$j(i)$ sufficiently large so that $d_{GH}\big((X_i,\omega_i),(X_i,\omega_i^{(j(i))})\big)\leq i^{-1}$.
The smooth polarized manifolds
\[
        (X_i,L_i,\omega_i^{(j(i))}),
        \qquad
        L_i:=-N_{\mathcal F}(K_{X_i}+\Delta_i),
\]
have a uniform Ricci lower bound, a uniform diameter bound, and a uniform volume
lower bound. Hence Gromov compactness gives a Gromov--Hausdorff convergent
subsequence of $(X_i,\omega_i^{(j(i))})$. By the triangle inequality, the original
conical spaces $(X_i,\omega_i)$ converge to the same limit; denote it by
$X_\infty$.

Finally, \cite[Theorem~1.1]{SLI} applies to the smooth polarized
approximating sequence. Therefore, after replacing $L_i$ by a fixed power
$L_i^{k_1}$, the $X_i$ embed into a fixed projective space $\mathbb{CP}^N$, the
embedded varieties converge algebraically, and the metric limit $X_\infty$ is
homeomorphic to a normal projective variety.
\end{proof}

Since $X_\infty$ is obtained from the polarized approximation as a
non-collapsed Ricci-limit space of real dimension $2n$, we may use the
Cheeger--Colding stratification
\cite{CheegerColdingI,CheegerColdingII,CheegerColdingIII}. We write
\[
\mathcal S^k(X_\infty)
:=
{x\in X_\infty:\text{ no tangent cone at }x
\text{ splits off }\mathbb R^{k+1}}.
\]
Then
$dim_{\mathcal H}\mathcal S^k\leq k$, 
$\mathcal S(X_\infty)=\mathcal S^{2n-2}(X_\infty)$
the latter following from the codimension-two estimate for the singular set of
a non-collapsed Ricci limit.

In the K\"ahler setting the odd strata do not occur separately. Indeed, if a
tangent cone splits an $\mathbb R^{2k+1}$ factor, the parallel complex
structure on the regular part produces one further independent splitting
direction, and hence the same tangent cone splits an
$\mathbb R^{2k+2}$ factor; see
\cite[Proposition~B.5]{WangZhuBakryEmery}. Thus $\mathcal S^{2k+1}=\mathcal S^{2k}$, 
and we suppress the odd indices. In particular, $\mathcal S^0\subset\mathcal S^2\subset\cdots
\subset\mathcal S^{2n-2}=\mathcal S$.
We will use the codimension notation $S_2:=\mathcal S^{2n-2},
\qquad
S_4:=\mathcal S^{2n-4}$. 
Thus $S_2\setminus S_4$ is precisely the part of the singular set with a
codimension-two splitting defect.

\section{Local behavior}
\label{sec:local-structure}
In this section, we aim to study the metric geometry of $X_\infty$, together with the pair $(X_\infty, D_\infty)$ as a divisor in a variety. We then prove a qualitative bound on the convergence near a ``good'' tangent cone; and finally show that $(X_\infty, \omega_\infty)$ is a conical K\"ahler--Einstein singular metric. Then we degenerate one step more and get stronger structure results for any tangent cone $C(Y)$. 

We use the codimension notation $S_2:=\mathcal S^{2n-2}$ and
$S_4:=\mathcal S^{2n-4}$. Thus $S_2\setminus S_4$ denotes the points at which
some tangent cone has a real codimension-two non-Euclidean factor, but no tangent
cone has a better splitting defect.

Let $p\in S_2\setminus S_4$. By the cone-splitting theorem for Ricci-limit
spaces, together with the K\"ahler splitting structure of tangent cones
\cite{CheegerColdingII,CheegerJiangNaber,SLI}, we may choose a tangent cone of the form
$(X_\infty,r_j^{-2}d_\infty,p)\to \mathbb C_\gamma\times\mathbb C^{n-1}$ for
some sequence $r_j\to0$ and some $\gamma\in(0,1]$. Here $\mathbb C_\gamma$
denotes the flat two-dimensional cone of angle $2\pi\gamma$.

We now aim to analyze the metric on $X_\infty$ near such a point. The main issue
is to control how the divisors in the approximating pairs approach the
codimension-one singular set of the model cone. We use a tubular trapping
argument, combined with the Liu--Sz\'ekelyhidi good-coordinate theorem
\cite[Theorem~1.4 and Proposition~3.2]{SLI} and the transverse holonomy
computation in \cite[proof of Proposition~13]{CDSII}.

Pick points $p_i\in X_i$ converging to $p$ in the pointed Gromov--Hausdorff
convergence. On the rescaled spaces set $\omega_i^{(j)}:=r_j^{-2}\omega_i$.
On $X_i\setminus\operatorname{Supp}\Delta_i$ one has
$\operatorname{Ric}(\omega_i^{(j)})=\operatorname{Ric}(\omega_i)
=\lambda_i\omega_i=\lambda_i r_j^2\omega_i^{(j)}$. Thus, for fixed $j$, the
Ricci lower bound tends to zero at the rescaled scale.

Strictly speaking, the good-coordinate theorem of Liu--Sz\'ekelyhidi applies to
smooth K\"ahler metrics with Ricci curvature bounded below. We therefore apply
\cite[Theorem~1.4 and Proposition~3.2]{SLI} to the smooth polarized
approximants from Lemma~\ref{lem:polarized-approximation}, with the smoothing
parameter chosen diagonally after $r_j$ is fixed. Passing to the diagonal
sequence gives holomorphic coordinates for the conical metrics themselves. Thus,
for fixed $j$ and all sufficiently large $i$, we get domains
$\Omega_i^{(j)}\subset X_i$ and holomorphic maps
$F_i^{(j)}=(u_i^{(j)},v_{i,1}^{(j)},\dots,v_{i,n-1}^{(j)}):
\Omega_i^{(j)}\to B_1(0)\subset\mathbb C^n$, with
$\omega_i^{(j)}=i\partial\bar\partial\phi_i^{(j)}$ and
$\|\phi_i^{(j)}\|_{L^\infty}\leq C_j$ on $\Omega_i^{(j)}$.

With this notation, we prove the local volume-form estimate needed later.

\begin{lemma}[Local gauge and volume-form bounds near $S_2\setminus S_4$]
\label{lem:tubular-trapping}
Let $p\in S_2\setminus S_4$, and choose the rescaled holomorphic charts
$F_i^{(j)}:\Omega_i^{(j)}\to B\subset\mathbb C^n$ as above. Write
$D_i^{(j)}:=F_i^{(j)}(\operatorname{Supp}\Delta_i\cap\Omega_i^{(j)})$.
After shrinking $B$ to a smaller polydisc $B'\Subset B$, there are
nowhere-vanishing holomorphic functions $U_i^{(j)}$ on $B'$ such that
\[
        (\omega_i^{(j)})^n
        =
        e^{-\lambda_i r_j^2\phi_i^{(j)}}|U_i^{(j)}|^2
        \prod_a |f_{i,a}^{(j)}|^{2\beta_{i,a}-2}\,dV_{Euc} .
\]
Here the $f_{i,a}^{(j)}$ are local defining functions for the irreducible
components of $D_i^{(j)}\cap B'$, counted with coefficients
$1-\beta_{i,a}$. Moreover there is a constant $C_j$, independent of $i$, such
that $C_j^{-1}\leq |U_i^{(j)}|\leq C_j$ on $B'$.
\end{lemma}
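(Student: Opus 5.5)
The plan is to read off the volume form from the Einstein equation in the holomorphic chart, and to control the holomorphic factor by integrability plus the $L^\infty$ potential bound. First, on $\Omega_i^{(j)}\setminus\operatorname{Supp}\Delta_i$ we have $\operatorname{Ric}(\omega_i^{(j)})=\lambda_i r_j^2\omega_i^{(j)}$ and $\omega_i^{(j)}=i\partial\bar\partial\phi_i^{(j)}$. Writing $(\omega_i^{(j)})^n=e^{-G}dV_{Euc}$ in the chart $F_i^{(j)}$, this gives $i\partial\bar\partial\bigl(G-\lambda_i r_j^2\phi_i^{(j)}\bigr)=0$ off the divisor. Hence $H:=-G+\lambda_i r_j^2\phi_i^{(j)}$ is pluriharmonic on $B\setminus D_i^{(j)}$. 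On $B'$ the divisor $D_i^{(j)}\cap B'$ is analytic with finitely many components, each with a defining function $f_{i,a}^{(j)}$. The conical equation from the Definition and its local model $|s_a|^{2\beta_a-2}$ then force
\[
H-\sum_a(2\beta_{i,a}-2)\log|f_{i,a}^{(j)}|
\]
to extend as a bounded pluriharmonic function across the smooth part of the divisor, hence across all of it, since the singular locus of the divisor has codimension two. On the simply connected polydisc $B'$ we can write this pluriharmonic function as $\log|U_i^{(j)}|^2$ with $U_i^{(j)}$ holomorphic and nowhere vanishing. This yields the stated formula; after normalizing, the $f_{i,a}^{(j)}$ can be taken to be Weierstrass polynomials in the $u$ variable.

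The main work is the uniform bound $C_j^{-1}\le|U_i^{(j)}|\le C_j$. For the upper bound, I will use that $\int_{B}(\omega_i^{(j)})^n$ is bounded by the Bishop–Gromov volume bound at scale $r_j$, uniformly in $i$, and that $e^{-\lambda_i r_j^2\phi}$ is bounded above and below by the $L^\infty$ bound $C_j$. The factor $\prod|f|^{2\beta-2}\ge c>0$ once the $f$'s are normalized to be bounded on $B$. Together these give a uniform $L^1$ bound on $|U|^2$ over $B$, and the mean value inequality for the plurisubharmonic function $|U|^2$ then bounds it on $B'$.

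For the lower bound I will apply the same reasoning to $\log|U|$, which is pluriharmonic. It suffices to bound $\int_{B''}|U|^{-2\epsilon}$, or to bound $\log|U|$ from below at one point and then use Harnack together with the upper bound. To get the one-point bound, I will use the non-collapsing volume lower bound on a small ball near a point where the chart is nearly Euclidean. There the approximating metric is close to $\mathbb C_\gamma\times\mathbb C^{n-1}$ away from the axis. The divisor trapping and transverse degree bound, obtained from Liu–Székelyhidi charts and the transverse holonomy argument of \cite{CDSII}, give this closeness, with $\gamma$ determined by the $\beta$'s. A volume lower bound on such a ball, together with the upper bounds on the other factors, bounds $|U|$ from below at some point. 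Harnack for the positive harmonic function $\log C_j-\log|U|$ then spreads the bound to all of $B'$.

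I expect the lower bound to be the main obstacle. It needs uniform control on the degrees of the $f_{i,a}^{(j)}$, so that the $|f|^{2\beta-2}$ factors do not absorb the volume and the normalization of the $f$'s stays uniform. Both points depend on the tubular trapping and degree bound near the axis.
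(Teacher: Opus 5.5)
Your proposal is correct in outline, and it takes a genuinely different route for the lower bound. The logarithmic gauge follows the paper: $H$ is pluriharmonic off the divisor, bounded across it, and equal to $\log|U_i^{(j)}|^2$ on a simply connected polydisc. The upper bound also follows the paper: monic Weierstrass factors with trapped roots are bounded on $B_1$, so the weight is bounded below; Bishop--Gromov and the $L^\infty$ bound on $\phi_i^{(j)}$ then give $\int_{B_1}|U_i^{(j)}|^2\le C_j$, and the mean-value inequality finishes. The difference is in the lower bound.

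\textbf{The paper's lower bound.} It argues by contradiction. If $U_i^{(j)}(x_i)\to0$, Montel and Hurwitz force $U_i^{(j)}\to0$ locally uniformly. Uniform $L^1$-integrability of $\prod_a|f_{i,a}^{(j)}|^{2\beta_{i,a}-2}$ on $B''$ then gives $\int_{B''}(\omega_i^{(j)})^n\to0$, contradicting non-collapsing. That integrability needs the finer fiberwise holonomy bound $\sum_a m_a(1-\beta_{i,a})\le\delta_*<1$.

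\textbf{Your lower bound.} You work on a ball $Q$ at distance $\ge\eta$ from $\{u=0\}$. Trapping of the roots within $\eta/2$ of the axis, plus bounded degree, gives $\prod_a|f_{i,a}^{(j)}|^{2\beta_{i,a}-2}\le C(\eta)$ on $Q$. Non-collapsing on $(F_i^{(j)})^{-1}(Q)$ then gives $\sup_Q|U_i^{(j)}|\ge c_j$. Harnack for the nonnegative harmonic function $\log C_j-\log|U_i^{(j)}|$ spreads this to a smaller polydisc. This is an effective version of the Hurwitz step. It gives explicit constants and never needs integrability of the weight near the axis, only trapping and a degree bound.

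\textbf{What you still need to supply.} You cite the trapping and the degree bound, but in the paper they are proved as the first half of this same lemma, and both of your bounds depend on them. So does the Weierstrass normalization of the $f_{i,a}^{(j)}$. Note that trapping does not come from holonomy or the Liu--Sz\'ekelyhidi charts. It comes from the density gap: points of $\operatorname{Supp}\Delta_i$ have volume density $\le1-\frac1{N_{\mathcal F}}$, while points staying off the axis converge to regular points of $\mathbb C_\gamma\times\mathbb C^{n-1}$. The degree bound comes from the transverse holonomy identity, combined with the Chern--Levine--Nirenberg estimate $|\int_T\rho_i^{(j)}|\le C_jr_j^2$ and the quantization $1-\beta_{i,a}\in\frac1{N_{\mathcal F}}\mathbb Z$.
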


\begin{proof}
We first trap the divisor. For every $\eta>0$, after first taking $j$ sufficiently
large and then $i$ sufficiently large, we claim that
$D_i^{(j)}\cap \frac12B\subset N_\eta(\{u=0\})$. If not, there are
$\eta_0>0$ and points $x_i\in D_i^{(j)}\cap \frac12B$ whose distance from
$\{u=0\}$ is at least $\eta_0$. Passing to a subsequence, $x_i$ converges to a
point in the regular part of the model cone
$\mathbb C_\gamma\times\mathbb C^{n-1}$ away from $\{u=0\}$. In this region the
rescaled metrics converge smoothly to the flat model, so the volume density at
$x_i$ tends to $1$. On the other hand, every point of
$\operatorname{Supp}\Delta_i$ has tangent density bounded away from $1$ by a
constant depending only on $N_{\mathcal F}$; at a smooth point of a single
component the density is $\beta_{i,a}\leq 1-\frac1{N_{\mathcal F}}$, and at
intersections or singular points the density is no larger. This contradiction
proves the trapping claim.

Choose $0<r_u,r_v<1/2$ so that
$\Delta_{r_u}\times B_{r_v}^{n-1}\Subset \frac12 B_1(0)$, and replace the
chart target by the product polydisc
$B:=\Delta_{r_u}\times B_{r_v}^{n-1}$. The trapping implies that, for $j$ and then
$i$ sufficiently large, $D_i^{(j)}$ does not meet the side region
$\{|u|\geq r_u,\ |v|<r_v\}$. Hence the projection $\pi(u,v)=v$ restricts to a
proper holomorphic map $D_i^{(j)}\cap B\to B_{r_v}^{n-1}$. By Weierstrass
preparation, after shrinking once more if necessary,
$D_i^{(j)}\cap B=\{P_i^{(j)}(u,v)=0\}$, where
\[
        P_i^{(j)}(u,v)
        =
        u^{m_i^{(j)}}
        +a_{i,m_i^{(j)}-1}^{(j)}(v)u^{m_i^{(j)}-1}
        +\cdots+a_{i,0}^{(j)}(v).
\]

We next bound $m_i^{(j)}$. Pick $v_0\in B_{r_v}^{n-1}$ so that the vertical disk
$T_i^{(j)}:=\{(u,v_0): |u|<r_u\}$ is transverse to $D_i^{(j)}$ for all $i$ in
the chosen subsequence. For fixed $v_0$, the polynomial
$P_i^{(j)}(u,v_0)$ may be factored as a one-variable polynomial, with roots
counted with multiplicity. 

From the Chern connection on $T^{1,0}$ we obtain the induced connection on the
anticanonical bundle, whose monodromy around a loop lies in $U(1)$. In the
following take $T:=T_i^{(j)}$ and
$T_\epsilon:=T\setminus\bigcup_a B_\epsilon(p_a)$, where
$p_a\in T\cap D_i^{(j)}$.
Then $Hol(\partial T) = (\prod_a Hol (\partial B_\epsilon(p_a))) \exp(\int_{T_\epsilon} \rho)$. Now, as $\epsilon \rightarrow 0$, each small loop around a transverse conic point contributes the local monodromy $\exp(2 \pi i (1 - \beta_i))$. Applying the holonomy computation in the proof of
\cite[Proposition~13]{CDSII} to the punctured disk
$T\setminus D_i^{(j)}$, one obtains
\[
        \operatorname{Hol}(\partial T)
        =
        \exp\left(
        2\pi\sqrt{-1}
        \left[
        \sum_a m_a(1-\beta_{i,a})
        +\int_T\rho_i^{(j)}
        \right]\right),
\]
where $\rho_i^{(j)}$ is the Ricci form of the rescaled metric on the smooth
locus. The equality is first an equality in $S^1$; we choose the branch
determined by the limiting transverse cone $\mathbb C_\gamma$.

The Chern--Levine--Nirenberg estimate used in the proof of
\cite[Proposition~13]{CDSII} applies here because the Liu--Sz\'ekelyhidi chart gives
$\|\phi_i^{(j)}\|_{L^\infty}\leq C_j$. Hence
$\operatorname{Area}_{\omega_i^{(j)}}(T_i^{(j)})\leq C_j$. Since
$\rho_i^{(j)}=\lambda_i r_j^2\omega_i^{(j)}$ on the smooth locus, we get
$|\int_{T_i^{(j)}}\rho_i^{(j)}|\leq C_j r_j^2$. Letting $i\to\infty$ and then
$j\to\infty$, the holonomy converges to the holonomy of the transverse cone,
namely $\exp(2\pi\sqrt{-1}(1-\gamma))$. Thus
$\sum_a m_a(1-\beta_{i,a})\to 1-\gamma$. Since
$1-\beta_{i,a}\in \frac1{N_{\mathcal F}}\mathbb Z$, the degrees
$m_i^{(j)}$ are uniformly bounded, and $\gamma\in\frac1{N_{\mathcal F}}\mathbb Z$.

The trapping also gives uniform control of the Weierstrass coefficients. For
each fixed $j$, after passing to a subsequence in $i$, the coefficients
$a_{i,\ell}^{(j)}$ converge uniformly on compact subsets to holomorphic limits
$a_{\infty,\ell}^{(j)}$. Their roots remain in $|u|\leq\eta_j$; hence, after
subsequently taking $j\to\infty$ diagonally, the limiting Weierstrass divisor is
supported on $\{u=0\}$.

Let $q_{i,a}:=N_{\mathcal F}(1-\beta_{i,a})$. Recall that
$\Delta_i=\sum_a(1-\beta_{i,a})D_{i,a}$, so that
$N_{\mathcal F}\Delta_i=\sum_a q_{i,a}D_{i,a}$ is integral. On the polydisc
$B$, choose the local defining functions $f_{i,a}^{(j)}$ coming from the
Weierstrass preparation above, normalized to be monic in the $u$-variable. If
$\Omega:=du\wedge dv_1\wedge\cdots\wedge dv_{n-1}$, then
$\tau_i^{(j)}
:=
\Omega^{\otimes N_{\mathcal F}}
\prod_a(f_{i,a}^{(j)})^{-q_{i,a}}
$
is a meromorphic $N_{\mathcal F}$-canonical form whose pole divisor is
$N_{\mathcal F}\Delta_i$. Equivalently, $\tau_i^{(j)}$ is a nowhere-vanishing
holomorphic frame of $K_B^{\otimes N_{\mathcal F}}
\otimes \mathcal O_B(N_{\mathcal F}\Delta_i)
=
\mathcal O_B\bigl(N_{\mathcal F}(K_B+\Delta_i)\bigr).
$

On $B\setminus D_i^{(j)}$, the conical K\"ahler--Einstein equation implies that
\[
H_i^{(j)}
:=
\log\frac{(\omega_i^{(j)})^n}{dV}
+\lambda_i r_j^2\phi_i^{(j)}
-\sum_a(\beta_{i,a}-1)\log|f_{i,a}^{(j)}|^2
\]
is pluriharmonic. The logarithmic terms have exactly the singularities
prescribed by $\Delta_i$, so $H_i^{(j)}$ is locally bounded across
$D_i^{(j)}$ and therefore extends pluriharmonically across the divisor. After
shrinking $B$ once, we may assume that it is simply connected, and hence write
$H_i^{(j)}=\log|U_i^{(j)}|^2$ for a nowhere-vanishing holomorphic function
$U_i^{(j)}$. Thus
\[
(\omega_i^{(j)})^n
=
e^{-\lambda_i r_j^2\phi_i^{(j)}}
|U_i^{(j)}|^2
\prod_a |f_{i,a}^{(j)}|^{2\beta_{i,a}-2}\,dV.
\]

It remains to prove the two-sided bound for $U_i^{(j)}$. Fix, for the rest of
the proof, polydiscs $B'\Subset B_1\Subset B$. Here $j$ is fixed, and the
shrinking is chosen independently of $i$; all constants below are allowed to
depend on $j$. Since the $f_{i,a}^{(j)}$ are monic Weierstrass polynomials,
their roots remain in a fixed bounded $u$-disc by the trapping argument, while
their degrees are uniformly bounded by the transverse holonomy estimate.
Therefore $\sup_{B_1}|f_{i,a}^{(j)}|\leq C_j$
uniformly in $i$, and there are only uniformly many factors. Since
$2\beta_{i,a}-2\leq0$, it follows that $\prod_a |f_{i,a}^{(j)}|^{2\beta_{i,a}-2}\geq C_j^{-1}$
on $B_1\setminus D_i^{(j)}$.

We also have a uniform local volume bound on $B_1$. Indeed, by the construction
of the Liu--Sz\'ekelyhidi chart, for fixed $j$ the set
$(F_i^{(j)})^{-1}(B_1)$ is contained in a metric ball
$B_{\omega_i^{(j)}}(p_i,R_j)$, where $R_j$ is independent of $i$. The
rescaled metrics have a uniform Ricci lower bound for fixed $j$, and hence
Bishop--Gromov comparison gives $\int_{B_1}(\omega_i^{(j)})^n\leq C_j$
Using the volume-form equation, the bound
$\|\phi_i^{(j)}\|_{L^\infty}\leq C_j$, and the lower bound for the singular
weight, we obtain $\int_{B_1}|U_i^{(j)}|^2\,dV\leq C_j$
The mean-value inequality for holomorphic functions then gives
$|U_i^{(j)}|\leq C_j$ on $B'\Subset B_1$.

For the lower bound, suppose no such lower bound holds. Then, after passing to a
subsequence, there are points $x_i\in B'$ with $U_i^{(j)}(x_i)\to0$. By the
upper bound and Montel's theorem, $U_i^{(j)}$ converges uniformly on compact
subsets to a holomorphic function $U_\infty^{(j)}$. Since each $U_i^{(j)}$ is
nowhere vanishing, Hurwitz's theorem implies that the limit is either nowhere
vanishing or identically zero. Since it vanishes at the limit of the $x_i$, it
must be identically zero.

Fix $B''\Subset B'$. We use the elementary one-variable estimate that if
$\alpha_a\in[0,1)$ and $\sum_a\alpha_a\leq\delta_*<1$, then
$\prod_a |u-\sigma_a|^{-2\alpha_a}$ is uniformly $L^1$ on compact subdisks,
uniformly in the roots $\sigma_a$. Applying this estimate fiberwise in the
$u$-direction, and using the holonomy bound
$\sum_a(1-\beta_{i,a})\leq\delta_*<1$ for $j$ fixed sufficiently large, the
weights $\prod_a |f_{i,a}^{(j)}|^{2\beta_{i,a}-2}$ are uniformly integrable on
$B''$. Since $U_i^{(j)}\to0$ uniformly on $B''$ and the potentials
$\phi_i^{(j)}$ are uniformly bounded, the right-hand side of the volume-form
equation converges to $0$ in $L^1(B'')$. Hence
$\int_{B''}(\omega_i^{(j)})^n\to0$.

This contradicts non-collapsing and volume convergence in the Liu--Sz\'ekelyhidi chart, since
for fixed $j$ the chart is Gromov--Hausdorff close to a definite ball in
$\mathbb C_\gamma\times\mathbb C^{n-1}$. Therefore
$C_j^{-1}\leq |U_i^{(j)}|\leq C_j$ on $B'$, after shrinking $B'$ if necessary.
\end{proof}

We now combine the tubular trapping lemma with Bedford--Taylor continuity. The
point is that the previous lemma gives a genuine local logarithmic gauge for the
approximating volume forms; after passing to the limit, the only possible
codimension-one singular density is the model density $|u|^{2\gamma-2}$.

\begin{lemma}[Local structure away from codimension four]
\label{lem:local-structure}
For every $p\in X_\infty^{reg}\cup(S_2\setminus S_4)$ there is a
neighborhood $U$ of $p$ and a holomorphic embedding
$F:U\to B\subset\mathbb C^n$ with coordinates $(u,v_1,\dots,v_{n-1})$ on $B$
such that exactly one of the following holds.

\begin{enumerate}
\item If $p\in X_\infty^{reg}$, then $F(U)$ is smooth and $\omega_\infty$ is a
smooth K\"ahler metric on $U$.

\item If $p\in S_2\setminus S_4$, then, after shrinking $U$, the codimension-one
metric singular set in $U$ is $D=\{u=0\}$, and on $U\setminus D$ one has
$\omega_\infty=i\partial\bar\partial\phi$ and
\[
        \omega_\infty^n
        =
        \exp(-\lambda\phi+G)|u|^{2\gamma-2}dV .
\]
Here $\phi$ is bounded plurisubharmonic, $G$ is bounded pluriharmonic, and
$\gamma\in \frac1{N_{\mathcal F}}\mathbb Z\cap(0,1)$. With respect to these
coordinates, $\omega_\infty$ is uniformly equivalent on a smaller ball to the
standard cone metric
\[
        \omega_\gamma
        :=
        \gamma^2 |u|^{2\gamma-2}i\,du\wedge d\bar u
        +
        \sum_{a=1}^{n-1}i\,dv_a\wedge d\bar v_a .
\]
In particular every point of $D$ lies in $S_2\setminus S_4$.
\end{enumerate}

On overlaps, if $\phi_\alpha$ and $\phi_\beta$ are local bounded potentials for
$\omega_\infty$, then $i\partial\bar\partial(\phi_\alpha-\phi_\beta)=0$; hence
$\phi_\alpha-\phi_\beta$ is pluriharmonic.
\end{lemma}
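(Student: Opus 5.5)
The plan is to treat the two cases separately, obtaining the limiting chart from the Liu--Sz\'ekelyhidi charts $F_i^{(j)}$ and passing to the limit in $i$ with $j$ fixed.

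\emph{Regular points.} If $p\in X_\infty^{reg}$, the volume density is $1$. By the density gap from the trapping argument of Lemma~\ref{lem:tubular-trapping}, for $i$ large there is a fixed small ball around $p_i$ that does not meet $\operatorname{Supp}\Delta_i$. There the $\omega_i$ are smooth K\"ahler--Einstein metrics. Anderson's $\epsilon$-regularity, applied in harmonic coordinates that we then convert to holomorphic ones, gives uniform $C^{k,\alpha}$ bounds. Hence $\omega_\infty$ is smooth on $U$ and the charts converge to a holomorphic embedding, which proves case (1).

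\emph{Points of $S_2\setminus S_4$.} Fix $j$ large so that the charts $F_i^{(j)}$ are Gromov--Hausdorff close to a ball in $\mathbb C_\gamma\times\mathbb C^{n-1}$. The potentials satisfy $\|\phi_i^{(j)}\|_{L^\infty}\le C_j$, and the $F_i^{(j)}$ are uniformly Lipschitz into $\mathbb C^n$ (from the construction of the charts). By Arzel\`a--Ascoli we pass to a limit map $F:U\to B$ and bounded psh potentials $\phi_i^{(j)}\to\phi$ in $L^1_{loc}$. Injectivity of $F$ and its holomorphy on the regular part follow as in \cite{SLI}; normality of $X_\infty$ (Lemma~\ref{lem:polarized-approximation}) then lets holomorphy extend, and the map is an embedding since the limit is a biholomorphism onto its image near $p$.

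Lemma~\ref{lem:tubular-trapping} gives the gauge and volume bounds. The $U_i^{(j)}$ are uniformly bounded above and below on $B'$, so by Montel they converge to a nowhere vanishing $U_\infty$, and we set $G=\log|U_\infty|^2$, which is bounded pluriharmonic. The Weierstrass polynomials $f_{i,a}^{(j)}$ have bounded degree, and their roots converge into $\{u=0\}$ as $j\to\infty$. Passing to a limit in $i$ for fixed $j$ first gives a limiting divisor $\sum_a (1-\beta_a)\{f_a=0\}$. To identify it with $(1-\gamma)\{u=0\}$ exactly, rather than only as $j\to\infty$, we use the holonomy identity $\sum m_a(1-\beta_{i,a})\to1-\gamma$ together with the fact that at the fixed scale the singular set of the limit near $p$ is a complex hypersurface. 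Recentering at $p$, we choose the coordinate $u$ to be a defining function for it.

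We then take limits in the volume-form identity. The weights $\prod|f_{i,a}|^{2\beta_{i,a}-2}$ are uniformly $L^{1+\delta}$, by the fiberwise one-variable estimate with $\sum\alpha_a\le\delta_*<1$, and they converge in $L^1$. Bedford--Taylor continuity of $(i\partial\bar\partial\phi_i)^n$ for bounded psh functions converging uniformly yields
\[
\omega_\infty^n=e^{-\lambda\phi+G}|u|^{2\gamma-2}dV .
\]
Uniform convergence of the potentials is obtained from the $L^{1+\delta}$ density bound via Ko{\l}odziej stability; this is the step to justify carefully.

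Uniform equivalence with $\omega_\gamma$ comes from the $C^{0}$-type and Laplacian estimates for conical Monge--Amp\`ere equations with bounded potential. We apply the Chern--Lu/Guenancia--P\u{a}un argument comparing $\omega_\infty$ with $\omega_\gamma$: a lower Ricci bound on the regular part and bounded bisectional curvature of $\omega_\gamma$ give $\operatorname{tr}_{\omega_\infty}\omega_\gamma\le C$, and the volume-form identity then gives the reverse bound. Consequently every point of $\{u=0\}$ has tangent cone $\mathbb C_\gamma\times\mathbb C^{n-1}$, hence lies in $S_2\setminus S_4$. The overlap statement is immediate, since $i\partial\bar\partial(\phi_\alpha-\phi_\beta)=0$ as currents for bounded potentials and the difference is therefore pluriharmonic by Weyl's lemma.

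The main obstacle is this last step: identifying the limit singular divisor as a single smooth hypersurface with coefficient exactly $1-\gamma$ at a fixed scale, rather than only asymptotically in $j$. The plan is to handle it through the quantized holonomy, which uses $\gamma\in\frac1{N_{\mathcal F}}\mathbb Z$, combined with the fact that the density at nearby points of $\{u=0\}$ is also $\gamma$ by the same argument, so no merging of components with different angles can occur.
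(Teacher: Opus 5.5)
The step you flag as the main obstacle is a genuine gap, and your proposed repair does not close it. Your regular-point argument is sound, and so is your idea of passing to the limit in $i$ at a \emph{fixed} scale $j$.

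At fixed $j$ the limiting boundary is $\sum_a(1-\beta_a)\operatorname{div}(f_{\infty,a})$, whose support is only known to lie within $\eta_j$ of $\{u=0\}$. The quantized holonomy identity $\sum_a m_a(1-\beta_{i,a})=1-\gamma$ fixes only the total weighted intersection number of a transverse disc at $p$. At a nearby boundary point $q$, the same computation sees only the branches through $q$, with their multiplicities at $q$. Lower semicontinuity of the volume density gives only $\theta(q)\gtrsim\gamma$. So your claim that the density at nearby points is also $\gamma$ is equivalent to the limit divisor being smooth with constant multiplicity at $p$, which is what you are trying to prove.

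Moreover, $p\in S_2\setminus S_4$ does not force it. Suppose that near $p$ the boundary is $(1-\beta)\{u^2=v^3\}$ with $\beta\in(5/6,1)\cap\frac1{N_{\mathcal F}}\mathbb Z$, and set $\gamma=2\beta-1$. A tacnode formed by two equal-angle smooth branches, which can arise as a limit of snc boundaries, behaves the same way.
\begin{itemize}
\item The weights $(1,\gamma)$ on $(u,v)$ give initial term $u^2$, because $2<3\gamma$.
\item The resulting K-polystable degeneration is $(\mathbb C^2,2(1-\beta)\{u=0\})=\mathbb C_\gamma\times\mathbb C$. This is the tangent cone predicted by normalized-volume minimization, and by the paper's own identification of tangent cones with stable degenerations.
\item Hence $p\in S_2\setminus S_4$, and $2(1-\beta)=1-\gamma$ is compatible with quantization.
\item Yet the singular set near $p$ is a cusp, so no coordinate $u$ defines it, and its other points have density $\beta\neq\gamma$. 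So $\omega_\infty$ is not equivalent to $\omega_\gamma$ near $p$.
\end{itemize}
Your remark that components with different angles cannot merge misses the mechanism. What merges is equal-angle branches, or a branch with itself.

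The paper's proof has the same hole. It passes to the limit at fixed $j$ as though the divisor were already $\{u=0\}$ with defect $1-\gamma$. It does this via $L^1$ convergence of potentials, smooth convergence off the divisor, and the fact that Bedford--Taylor measures of bounded psh functions do not charge $\{u=0\}$. It then lets $j\to\infty$, which recovers only the tangent-cone model rather than a fixed-scale chart. Finally it quotes \cite[Propositions~24 and~27]{CDSII} for the equivalence with $\omega_\gamma$.

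What your fixed-scale argument does prove is case (2) at points where the limiting boundary has smooth support of constant multiplicity, with $\gamma$ the local total defect there. This covers generic points of each boundary component, which is all that Theorem~3.3 actually uses.

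Two secondary remarks. Ko{\l}odziej stability is not needed to pass the equation to the limit; the non-charging argument above suffices. Also, a Chern--Lu bound on a ball requires a localization or boundary barrier that your sketch does not provide, which is why the paper imports \cite[Proposition~24]{CDSII} instead.
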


\begin{proof}
The dimension estimate for $\Sigma$ follows from the Cheeger--Colding
stratification: $\dim_{\mathcal H}\mathcal S^{2n-4}\leq 2n-4$, and in the
K\"ahler setting the odd real strata do not contribute to the codimension-one
analysis.

If $p\in X_\infty^{reg}$, choose scales $r_j\to0$ such that
$(X_\infty,r_j^{-2}d_\infty,p)\to \mathbb C^n$. By the smooth convergence on
the regular set, equivalently by the Liu--Sz\'ekelyhidi good-coordinate theorem
applied through the polarized approximation, we obtain a holomorphic chart in
which $\omega_\infty$ is a smooth K\"ahler metric. This proves the first case.

Now assume $p\in S_2\setminus S_4$. Choose a tangent cone
$(X_\infty,r_j^{-2}d_\infty,p)\to \mathbb C_\gamma\times\mathbb C^{n-1}$.
Apply Lemma~\ref{lem:tubular-trapping} in the corresponding rescaled Liu--Sz\'ekelyhidi
coordinates. Thus, on a smaller polydisc $B'$, the approximating metrics satisfy
\[
        (\omega_i^{(j)})^n
        =
        e^{-\lambda_i r_j^2\phi_i^{(j)}}|U_i^{(j)}|^2
        \prod_a |f_{i,a}^{(j)}|^{2\beta_{i,a}-2}\,dV ,
\]
with $C_j^{-1}\leq |U_i^{(j)}|\leq C_j$, and the divisors are trapped in a
tubular neighborhood of $\{u=0\}$. The same lemma also gives
$\gamma\in \frac1{N_{\mathcal F}}\mathbb Z\cap(0,1)$.

After passing to a subsequence, the uniformly bounded plurisubharmonic
potentials $\phi_i^{(j)}$ converge locally in $L^1$ to a bounded
plurisubharmonic function $\phi^{(j)}$. On every compact subset of
$B'\setminus\{u=0\}$, the trapping lemma excludes the divisors for $i\gg1$,
and the convergence on the regular set is smooth; hence the limiting
Monge--Amp\`ere equation holds there. Since $\phi^{(j)}$ is locally bounded,
its Bedford--Taylor Monge--Amp\`ere measure does not charge the divisor
$\{u=0\}$ \cite{BedfordTaylorCapacity}. The density
$|u|^{2\gamma-2}dV$ is locally integrable and also gives this divisor zero
mass. Therefore the same equation holds on all of $B'$:
$(i\partial\bar\partial\phi^{(j)})^n
=e^{-\lambda_\infty r_j^2\phi^{(j)}}|U_\infty^{(j)}|^2
|u|^{2\gamma-2}dV$.
Since $U_\infty^{(j)}$ is holomorphic and nowhere vanishing,
$G^{(j)}:=\log |U_\infty^{(j)}|^2$ is bounded and pluriharmonic.

Passing from the rescaled chart back to the original scale, and then letting
$j\to\infty$ along the chosen tangent-cone sequence, gives a bounded
plurisubharmonic potential $\phi$ and a bounded pluriharmonic function $G$ such
that on $U\setminus\{u=0\}$,
$\omega_\infty=i\partial\bar\partial\phi$ and
$\omega_\infty^n=\exp(-\lambda\phi+G)|u|^{2\gamma-2}dV$.

It remains only to identify the local metric model. Note that the preceding
equation is now in the form used in \cite[Section~3.2]{CDSII}, in the smooth
divisor chart $D=\{u=0\}$. Proposition~24 of \cite{CDSII} gives uniform
equivalence with the standard cone metric $\omega_\gamma$ on a smaller ball,
and \cite[Proposition~27]{CDSII} gives the standard cone regularity of the
potential.
Consequently the metric singular set in this chart is exactly $\{u=0\}$, and
each point of this divisor has tangent cone
$\mathbb C_\gamma\times\mathbb C^{n-1}$. Hence $D\subset S_2\setminus S_4$.

Finally, if $\phi_\alpha$ and $\phi_\beta$ are two local bounded potentials for
the same current $\omega_\infty$, then
$i\partial\bar\partial(\phi_\alpha-\phi_\beta)=0$ on the overlap. Since the
difference is locally bounded, it is pluriharmonic. This proves the lemma.
\end{proof}

We now prove the main theorem of this section.

\begin{theorem}
The Gromov--Hausdorff limit $X_\infty$ is 
\begin{itemize}
    \item Normal, projective, and log-$\mathbb{Q}$-Gorenstein
    \item There exists an effective $\mathbb{Q}$-divisor $\Delta = \sum_i (1-\beta_i) D_i$ on $X_\infty$, where the $D_i$ are the irreducible codimension-one components of the divisorial conical locus, such that $K_{X_\infty} + \Delta$ is $\mathbb{Q}$-Cartier
    \item $\omega_\infty$ satisfies the weak conical KE equation. More precisely, there exists a closed positive $(1,1)$ current $\omega_0\in c_1\bigl(-(K_{X_\infty}+\Delta)\bigr)$, a function $\phi \in PSH(X_\infty, \omega_0) \cap L^\infty$, Hermitian metrics $h_i$ on $\mathcal O(D_i)$, and a bounded function $F \in L^\infty(X_\infty)$ such that
$\omega_\infty = \omega_0 + i \partial \bar{\partial} \phi$
and
$(\omega_0 + i \partial \bar{\partial} \phi)^n = \omega_0^n \exp(-\lambda \phi + F) \prod_i |s_i|_{h_i}^{2(\beta_i-1)}$
in the Bedford--Taylor sense. On $X_\infty^{reg} \backslash \mathrm{Supp}(\Delta)$, the function $F$ satisfies
$i \partial \bar{\partial} F = \operatorname{Ric}(\omega_0) - \lambda \omega_0 - \sum_i (1-\beta_i)\Theta(h_i).$
\end{itemize} 
\end{theorem}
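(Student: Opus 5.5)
Normality and projectivity come directly from Lemma~\ref{lem:polarized-approximation}: through the polarized approximants, $X_\infty$ is homeomorphic to a normal projective variety embedded by $L_\infty$, the limit of $L_i^{k_1}$. We will check that this homeomorphism is biholomorphic on the regular set and on $S_2\setminus S_4$. This follows from the holomorphic charts of Lemma~\ref{lem:local-structure}, because the Liu--Sz\'ekelyhidi coordinates are restrictions of sections of $L_i^{k}$ and converge to sections of $L_\infty^k$.

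Next, define $\Delta$. Let $\Sigma_1\subset X_\infty$ be the closure of the codimension-one metric singular locus. By Lemma~\ref{lem:local-structure}, $\Sigma_1\cap(X_\infty\setminus S_4)$ is locally a smooth hypersurface $\{u=0\}$ with a locally constant angle $\gamma\in\frac1{N_{\mathcal F}}\mathbb Z\cap(0,1)$. Since $\dim_{\mathcal H}S_4\le 2n-4$, the Remmert--Stein theorem (or Bishop's extension theorem, using locally finite volume from the Bedford--Taylor mass bound) extends each component to an irreducible analytic, hence algebraic, hypersurface $D_i\subset X_\infty$. The angle is constant along $D_i$ because it is locally constant on the connected set $D_i\setminus S_4$; call it $\beta_i$. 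This gives the effective $\mathbb Q$-divisor $\Delta=\sum_i(1-\beta_i)D_i$. The number of components is finite because the degree of $\Sigma_1$ with respect to $L_\infty$ is bounded, as a limit of the degrees of $\operatorname{Supp}\Delta_i$.

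The $\mathbb Q$-Cartier and Einstein statements rest on building a local $N_{\mathcal F}$-pluricanonical frame. On $X_\infty\setminus S_4$, Lemma~\ref{lem:local-structure} gives locally the form $\tau=e^{G/2\cdot N_{\mathcal F}}\ldots$. More concretely, the limits of the frames $\tau_i^{(j)}=\Omega^{\otimes N_{\mathcal F}}\prod f^{-q}$ from Lemma~\ref{lem:tubular-trapping}, multiplied by $(U^{(j)})^{N_{\mathcal F}}$, give nowhere-vanishing local sections $\tau$ of $\mathcal O(N_{\mathcal F}(K+\Delta))$ with $|\tau|^{2/N_{\mathcal F}}$ comparable to $e^{\lambda\phi}\omega_\infty^n$. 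Gluing, we will obtain a global nowhere-vanishing section on $X_\infty\setminus S_4$ of $\mathcal O(N_{\mathcal F}(K_{X_\infty}+\Delta))\otimes L_\infty^{-1}$, compatible with the limit of the tautological isomorphisms $L_i\cong -N_{\mathcal F}(K_{X_i}+\Delta_i)$. Since $S_4$ has complex codimension $\ge2$ and the sheaves are reflexive on a normal variety, this isomorphism extends across $S_4$. Therefore $N_{\mathcal F}(K_{X_\infty}+\Delta)\cong L_\infty^{-1}$ is Cartier, which gives the log $\mathbb Q$-Gorenstein property.

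This extension is the main obstacle. One needs a uniform bound on the limiting frame near $S_4$ so that the identification is holomorphic and not merely meromorphic. I would control it by comparing $|\tau|^{2/N_{\mathcal F}}$ with the volume form $e^{\lambda\phi}\omega_\infty^n$, using uniform $L^\infty$ bounds on potentials (from the partial $C^0$ estimate of \cite{SLI}) and the fact that $\omega_\infty^n$ has locally finite mass. Hartogs-type $L^2$ extension then shows the section has no poles along $S_4$.

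Finally, take $\omega_0$ to be the restriction of a Fubini--Study form in $c_1(L_\infty)/N_{\mathcal F}$, and choose $h_i$ arbitrary smooth metrics on $\mathcal O(D_i)$. Define $F$ on $X_\infty^{reg}\setminus\operatorname{Supp}\Delta$ by the volume-form identity, and set $\phi$ to be the global potential, which is bounded by uniform $C^0$ estimates. Then $F$ satisfies the stated $\partial\bar\partial$-equation, because the local $G$ terms are pluriharmonic. $F$ is bounded near $D_i\setminus S_4$ by Lemma~\ref{lem:local-structure}, and bounded near $S_4$ by the frame comparison above. The equation holds in the Bedford--Taylor sense on all of $X_\infty$ because bounded potentials charge neither $\operatorname{Supp}\Delta$ nor the pluripolar set $S_4$, and the right-hand side is $L^1$.
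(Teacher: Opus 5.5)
Your treatment of normality and projectivity, and your plan for the $\mathbb Q$-Cartier and Einstein parts, follow the paper. You identify $\mathcal O(-k_1N_{\mathcal F}(K_{X_\infty}+\Delta))$ with the limiting polarization $\mathcal L_\infty$ on the good locus, extend as reflexive sheaves, and glue bounded potentials against a Hermitian metric on $\mathcal L_\infty$. Taking the identifying section as a limit of the global trivializations on $X_i$ is more explicit than the paper's one-line version. Mind the sign: you need a nowhere-vanishing section of $\mathcal O(k_1N_{\mathcal F}(K_{X_\infty}+\Delta))\otimes\mathcal L_\infty$, not of $\ldots\otimes\mathcal L_\infty^{-1}$.

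The genuine difference is how you get analyticity of the boundary. The paper shows all of $\mathcal S$ is analytic: the density gap gives $\mathcal R=\mathcal R_{\varepsilon_0}$, and a Siu/Lelong-number description of $\mathcal S$ follows. It then takes the $D_i$ to be the divisorial components. You only extend the smooth hypersurface $S_2\setminus S_4$ across $S_4$, which is more economical. However, Remmert--Stein and Bishop both require an analytic exceptional set, and $S_4$ is not known to be analytic. The right tool is Shiffman's theorem: a pure $(n-1)$-dimensional analytic set in $U\setminus E$ has analytic closure when $\mathcal H^{2n-3}(E)=0$. This holds since $\dim_{\mathcal H}S_4\le 2n-4$; apply it in the ambient projective space. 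Compactness then gives finitely many components, so your degree argument is unnecessary.

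Not having analyticity of $\mathcal S$ costs you twice more.

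\emph{The reflexive extension.} The phrase ``$S_4$ has complex codimension $\ge 2$'' presupposes analyticity. You can repair it as follows.
By Lemma~\ref{lem:local-structure}, $X_\infty^{\mathrm{sing}}\subset S_4$, and on $X_\infty^{\mathrm{reg}}$ both sheaves are line bundles. Shiffman's extension lemma (holomorphic functions extend across closed $E$ with $\mathcal H^{2n-3}(E)=0$) extends both the section and its inverse there. Normality then handles the analytic set $X_\infty^{\mathrm{sing}}$. So no frame bound is needed and no poles can occur. The ``main obstacle'' you name is not where the difficulty lies, and $L^{2/N_{\mathcal F}}$-integrability of the frame would not exclude poles anyway.

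\emph{Bedford--Taylor.} You call $S_4$ pluripolar, but a closed set of Hausdorff dimension $\le 2n-4$ need not be. For $n\ge 4$, a totally real $\mathbb R^n\subset\mathbb C^n$ carries Monge--Amp\`ere mass of a bounded psh function. In the paper this comes free from analyticity of $\mathcal S$. In your route you can compare total masses instead. By flatness and volume convergence, $\int_{X_\infty}(\omega_0+i\partial\bar\partial\phi)^n=\int_{X_\infty}\omega_0^n=\lim_i\operatorname{Vol}(X_i,\omega_i)=\mathcal H^{2n}(\mathcal R)$. Since the Monge--Amp\`ere measure on $\mathcal R$ is the volume form, $\mathcal S$ carries no mass.

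Finally, the claim that $F$ is bounded near $S_4$ ``by the frame comparison'' fails. The frame gives $\omega_\infty^n=e^{-\lambda\phi}\mu_\sigma$, where $\mu_\sigma$ is the adapted measure of the trivializing section. Hence, up to a constant, $e^{F}=\mu_\sigma\prod_i|s_i|_{h_i}^{2(1-\beta_i)}/\omega_0^n$. With $\omega_0$ a restricted Fubini--Study form, this ratio blows up at singular points of $X_\infty$. At an $A_1$ point away from $\operatorname{Supp}\Delta$, in orbifold coordinates $w$, one has $\mu_\sigma\sim dV$ but $\omega_0^2\lesssim|w|^4\,dV$, so $F\to+\infty$. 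The paper's proof does not establish this either: its $F_\alpha$ are bounded only chart by chart on $X_\infty^{\mathrm{reg}}\cup(S_2\setminus S_4)$. A globally bounded $F$ would require a different, singular choice of $\omega_0$, not the frame comparison.
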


\begin{proof}
By the polarized approximation lemma and the uniform estimates in \cite{SLI}, Theorem 1.1, we obtain embeddings of the approximants into a fixed projective space $\mathbb{CP}^N$. By a diagonal argument, the limit $X_\infty$ is also realized as a projective variety. The same theorem implies that $X_\infty$ is normal.

Let $\mathcal R$ and $\mathcal S$ denote the metric regular and singular sets.
Let $\varepsilon_{\mathrm{LS}}>0$ be the constant in the
Liu--Sz\'ekelyhidi $\varepsilon$-regularity theorem. Since the cone
coefficients belong to a fixed finite subset of $(0,1)$, there is
$\delta_{\mathrm{ang}}>0$ such that every codimension-two conical point has
volume density at most $1-\delta_{\mathrm{ang}}$. Choose $0<\varepsilon_0<
 \min\{\varepsilon_{\mathrm{LS}},\delta_{\mathrm{ang}}/2\}$.
If a point belongs to the quantitative regular set $\mathcal R_{\varepsilon_0}$,
then the $\varepsilon$-regularity theorem gives a holomorphic regular chart in
a neighborhood of that point. Conversely, at every metrically regular point
the volume ratios tend to one. Hence $\mathcal R=\mathcal R_{\varepsilon_0}$.
In particular, $\mathcal R$ is open and $\mathcal S$ is closed.

We next use the quantitative form of the Siu argument in
\cite[Section~4]{SLI}. At an algebraically smooth point, failure of metric
regularity is detected by a positive Lelong number of the limiting Ricci
current. The fixed density gap above gives a uniform constant $c_0>0$ such
that $\mathcal S
=
\operatorname{Sing}_{\mathrm{an}}(X_\infty)
\cup
\left\{
 x\in X_\infty^{\mathrm{an,reg}}:
 \nu\bigl(\operatorname{Ric}(\omega_\infty),x\bigr)\ge c_0
\right\}.
$
The second set is analytic by Siu's theorem, and the first is analytic because
$X_\infty$ is a normal variety. Thus $\mathcal S$ is a complex analytic
subvariety; denote it by $V_{\mathcal S}$. Let $D_1,\ldots,D_m$ be the
irreducible codimension-one components of $V_{\mathcal S}$.

We now assign the cone coefficient along each $D_i$. The point is that the coefficient is first defined only on the good locus of $D_i$, and then extended by irreducibility. Set
$D_i^\circ:=D_i^{reg}\cap X_\infty^{reg}\cap(S_2\setminus S_4)$.
This is the locus where $D_i$ is a smooth hypersurface in the smooth part of $X_\infty$ and where the metric tangent cone has exactly a codimension-two non-Euclidean factor. The complement of $D_i^\circ$ in $D_i$ is small in the following sense. Since $X_\infty$ is normal, $X_\infty^{sing}$ has complex codimension at least two in $X_\infty$, and hence cannot contain an irreducible divisor. The singular locus of $D_i$ is a proper analytic subset of $D_i$. Finally, $S_4$ has Hausdorff dimension at most $2n-4$, whereas $D_i$ has real dimension $2n-2$. Hence $S_4$ cannot contain a relatively open subset of $D_i^{reg}$. Consequently $D_i^\circ$ is dense in $D_i$.

Let $p\in D_i^\circ$. By the local structure theorem, after shrinking around $p$ there are holomorphic coordinates $(u,v_1,\dots,v_{n-1})$ on $X_\infty^{reg}$ such that $D_i\cap U=\{u=0\}$ and
$\omega_\infty=i\partial\bar\partial\phi_\alpha,\qquad \omega_\infty^n=\exp(-\lambda\phi_\alpha+G_\alpha)|u|^{2\beta(p)-2}dV$
on $U\setminus D_i$, where $\phi_\alpha$ is bounded plurisubharmonic, $G_\alpha$ is bounded pluriharmonic, and $\beta(p)\in \frac{1}{N_{\mathcal F}}\mathbb Z\cap(0,1)$. The exponent is intrinsic: on the overlap of two such charts, the local defining functions of $D_i$ differ by a nowhere-vanishing holomorphic unit, so the order of the singular density along $D_i$ is unchanged. Hence $\beta(p)$ is locally constant on $D_i^\circ$.

Since $D_i$ is irreducible and $D_i^\circ$ is dense, this locally constant value is independent of the chosen general point of $D_i$. Equivalently, any two general points of $D_i^\circ$ can be joined through the smooth divisorial locus while avoiding the codimension-two exceptional set, and the overlap argument above forces the same exponent along the chain of local structure charts. We denote this common value by $\beta_i$ and define
$\Delta:=\sum_{i=1}^m(1-\beta_i)D_i$.

We now show that $K_{X_\infty}+\Delta$ is $\mathbb Q$-Cartier. Let
$\mathcal L_\infty:=\mathcal O_{\mathbb P^N}(1)|_{X_\infty}$. By
\cite[Theorem~1.1]{SLI}, the projective embeddings are defined by a fixed power
$L_i^{k_1}$, so $\mathcal L_\infty$ is the limiting polarization for
$L_i^{k_1}$. On the log-smooth locus, convergence of the local polarized frames
identifies $\mathcal L_\infty$ with
$\mathcal O_{X_\infty}(-k_1N_{\mathcal F}(K_{X_\infty}+\Delta))$ as rank-one
reflexive sheaves. Since the left-hand side is locally free, the right-hand side
is locally free as well. Thus $K_{X_\infty}+\Delta$ is $\mathbb Q$-Cartier.

Choose a Hermitian metric $h_{\mathcal L_\infty}$ on $\mathcal L_\infty$ and set
$\omega_0:=(k_1N_{\mathcal F})^{-1}\Theta(h_{\mathcal L_\infty})$. Then
$\omega_0$ represents $c_1(-(K_{X_\infty}+\Delta))$. Locally, write
$\omega_0=i\partial\bar\partial\psi_\alpha$.
Since $\omega_\infty$ is also locally given by bounded plurisubharmonic potentials, the differences $\phi_\alpha - \psi_\alpha$ are pluriharmonic on overlaps. Hence they glue to give a global bounded function $\phi \in PSH(X_\infty,\omega_0) \cap L^\infty$ such that $\omega_\infty = \omega_0 + i \partial \bar{\partial} \phi$.

Now choose Hermitian metrics $h_i$ on $\mathcal O(D_i)$, and let $s_i$ denote the defining section of $D_i$. On a local chart $U_\alpha$, write $\omega_0^n = e^{H_\alpha} dV_\alpha$, and write $|s_i|_{h_i}^2 = |f_{i,\alpha}|^2 e^{-k_{i,\alpha}}$, where $f_{i,\alpha}$ is a local defining function for $D_i$. We then define
$F_\alpha := G_\alpha - \lambda \psi_\alpha - H_\alpha - \sum_i (1-\beta_i)k_{i,\alpha}.$
Substituting into the local equation for $\omega_\infty^n$, we obtain $\omega_\infty^n = \omega_0^n \exp(-\lambda \phi + F_\alpha)\prod_i |s_i|_{h_i}^{2(\beta_i-1)}$ on $U_\alpha \cap X_\infty^{reg}$.

The functions $F_\alpha$ agree on overlaps, so they glue to a global bounded function $F \in L^\infty(X_\infty^{reg} \cup (S_2 \backslash S_4))$. This gives
$(\omega_0 + i \partial \bar{\partial}\phi)^n = \omega_0^n \exp(-\lambda \phi + F)\prod_i |s_i|_{h_i}^{2(\beta_i-1)}$
globally in the Bedford--Taylor sense. 

Finally, on $X_\infty^{reg} \backslash \mathrm{Supp}(\Delta)$, the local Einstein equation implies $i \partial \bar{\partial} G_\alpha = 0$, and therefore$i \partial \bar{\partial} F = \operatorname{Ric}(\omega_0) - \lambda \omega_0 - \sum_i (1-\beta_i)\Theta(h_i).$
This is exactly the weak conical K\"ahler--Einstein equation for the pair $(X_\infty,\Delta)$.
\end{proof}

Now fix $p\in X_\infty$, and let $r_i\to0$ be a sequence such that
\[
(X_\infty,r_i^{-2}\omega_\infty,p)
\longrightarrow
(C(Y),\omega_{C(Y)},o)
\]
in the pointed Gromov--Hausdorff sense. We get a similar structural result for $C(Y)$. 

\begin{theorem}
\label{thm:tangent-cone-log-structure}
The metric tangent cone $C(Y)$ is:
\begin{itemize}
    \item Normal, affine, and log-$\mathbb{Q}$-Gorenstein
    \item There exists an effective $\mathbb{Q}$-divisor $\Delta = \sum_i (1-\beta_i) D_i$ on $C(Y)$, where the $D_i$ are the irreducible codimension-one components of the divisorial conical locus, such that $K_{C(Y)} + \Delta$ is $\mathbb{Q}$-Cartier 
    \item $\omega_{C(Y)}$ satisfies the weak conical Ricci-flat equation. More precisely, there is a section $s$ of $K_{C(Y)}^N$, metric $h_i$, sections $s_i$ on $\mathcal O(D_i)$ such that $\omega_{C(Y)}^n = e^F(s \wedge \bar{s})^{1/N} \prod |s_i|_{h_i}^{2(\beta_i - 1)}$, and the function $F$ satisfies $i \partial \overline{\partial} F = \sum_i (1- \beta_i) \Theta(h_i)$.
\end{itemize}
\end{theorem}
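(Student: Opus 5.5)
The plan is to repeat the argument of the preceding theorem, with the rescaled sequence $(X_\infty,r_i^{-2}\omega_\infty,p)$ replacing $(X_i,\omega_i)$, and the Donaldson--Sun / Liu--Sz\'ekelyhidi algebraic structure of tangent cones replacing the projective embedding. The rescaled limits are a diagonal sequence of rescaled approximants $(X_k, r_{i}^{-2}\omega_k^{(j)})$ from Lemma~\ref{lem:polarized-approximation}. Their Ricci lower bound $-Cr_i^2$ tends to $0$, and they are uniformly non-collapsed by Bishop--Gromov. Hence \cite[Theorem~1.1 and Section~3]{SLI} applies to this pointed sequence. It shows that $C(Y)$ is homeomorphic to a normal affine variety, realized as a limit of the rescaled local charts given by sections of $L^{k}$ of polynomial growth. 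It also shows that the Reeb field $\xi=J(r\partial_r)$ acts holomorphically with positive weights on the coordinate ring. This settles normality and affineness.

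Next I would construct the divisor. Cone splitting for $C(Y)$ gives the stratification $S_2\supset S_4$ on $C(Y)$, and every tangent cone of $C(Y)$ is again a limit of rescalings of the approximants. Hence Lemma~\ref{lem:tubular-trapping} and Lemma~\ref{lem:local-structure} apply verbatim at every $q\in C(Y)\setminus\{o\}$ lying in $S_2\setminus S_4$. At such points we get charts with
\[
\omega_{C(Y)}^n=e^{G}|u|^{2\gamma-2}dV,\qquad \gamma\in\tfrac1{N_{\mathcal F}}\mathbb Z\cap(0,1),
\]
where now $\lambda=0$ because the rescaled Einstein constant $\lambda r_i^2\to0$. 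The density-gap argument from the proof of the previous theorem gives $\mathcal R=\mathcal R_{\varepsilon_0}$. The Siu/Lelong-number description then shows the singular set is analytic, using normality to absorb $\operatorname{Sing}_{an}$. Its codimension-one components $D_i$ receive constant coefficients $\beta_i$ by the same density and irreducibility argument. I would also note that, since the metric is invariant under the flow of $\xi$ and $r\partial_r$, the $D_i$ are cones, which will be needed to pass through the vertex.

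For $\mathbb Q$-Cartierness and the volume form, I would take the local holomorphic $N_{\mathcal F}$-canonical frames $\tau$ of $\mathcal O(N_{\mathcal F}(K+\Delta))$ built in Lemma~\ref{lem:tubular-trapping}. These come from the $n$-forms $dz$ of the Liu--Sz\'ekelyhidi coordinates, and I would show they converge along the rescaled sequence to a nowhere-vanishing section $\Theta$ of $\mathcal O_{C(Y)}(N_{\mathcal F}(K_{C(Y)}+\Delta))$. This section is defined first on $C(Y)^{reg}\cup(S_2\setminus S_4)$. Its complement has codimension at least $2$, so normality (Hartogs for reflexive sheaves) extends $\Theta$ to a section of the reflexive sheaf on all of $C(Y)$. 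The section is nonvanishing off the vertex. At the vertex I would use homogeneity: $\mathcal L_\xi\Theta=c\,\Theta$ with $c$ fixed by scaling of $\omega_{C(Y)}^n$. The bounded-norm estimate $\omega_{C(Y)}^n\sim|\Theta|^{2/N}\prod|s_i|^{2\beta_i-2}$ then excludes a zero or pole at $o$. Thus the reflexive sheaf is trivialized by $\Theta$, so it is invertible, and $K_{C(Y)}+\Delta$ is $\mathbb Q$-Cartier. Writing $\Theta=s\prod s_i^{-q_i}$ with $s$ a section of $K^{N}_{C(Y)}$ gives the stated formula. The factor $e^F$ collects the local pluriharmonic terms $G$ and the Hermitian weights, and $i\partial\bar\partial F=\sum(1-\beta_i)\Theta(h_i)$ follows as before because $\operatorname{Ric}=0$ on the regular part.

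The main obstacle I expect is precisely the step across $S_4$, and in particular the vertex. The local-structure lemmas control only $S_2\setminus S_4$, while Cartierness is a statement at every point. The Hartogs extension handles $S_4\setminus\{o\}$ only if I know the limiting frame has no zeros there. For that I would first try uniform $L^2$ bounds on $\Theta$ relative to $\omega^n_{C(Y)}$ on annuli, obtained from Bishop--Gromov volume bounds as in the proof of the upper bound for $U_i^{(j)}$. These would combine with $\xi$-equivariance to reduce the question to a single annulus, where reflexive extension applies. If this fails, the fallback is to cite the Donaldson--Sun argument \cite{DSII} that tangent cones are $\mathbb Q$-Gorenstein, run on the log-smooth approximants.
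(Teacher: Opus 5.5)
Your outline follows the paper's: algebraicity from Liu--Sz\'ekelyhidi, the local conic charts on $C(Y)$ with $\lambda=0$, a log-pluricanonical section on the complement of a codimension-two analytic set, and reflexive extension. It does not close at the step you flag, but the obstruction is a missing one-line algebraic fact, not an analytic problem. Set $\mathcal F:=\mathcal O_{C(Y)}(N(K_{C(Y)}+\Delta))$, a rank-one reflexive sheaf on the normal variety $C(Y)$. Suppose $\Theta$ generates $\mathcal F$ on $C(Y)\setminus A$, with $A$ closed analytic of codimension $\geq 2$. Then multiplication by $\Theta$, $\mathcal O_{C(Y)}\to\mathcal F$, is a morphism of reflexive sheaves which is an isomorphism off $A$. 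It is therefore an isomorphism, since both sheaves are the pushforwards of their restrictions to $C(Y)\setminus A$. Equivalently, $\operatorname{div}\Theta$ is a Weil divisor supported in codimension two, hence zero. This is how the paper concludes ("its zero locus would contain a codimension-one component"), and it makes any separate analysis on $S_4\setminus\{o\}$ or at the vertex unnecessary.

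Your substitutes do not do this job. The claim that $\Theta$ is nonvanishing off the vertex is unjustified on $S_4\setminus\{o\}$. For $n\geq 3$ this set is in general a positive-dimensional subcone: it contains $D_i\cap D_k$ for two boundary components, or $\mathbb C\times\{o'\}$ when $C(Y)=\mathbb C\times C(Y')$. Likewise, ``excluding a zero or pole at $o$'' by homogeneity and a norm bound has no meaning at a point where $\mathcal F$ is not yet known to be invertible; that is exactly what the reflexivity argument establishes. The $L^2$-on-annuli idea, the Donaldson--Sun fallback, and the $\xi$-homogeneity of $\Theta$ are not needed here. The paper uses homogeneity only later, to define $\chi_\Delta$.

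Second, your construction of $\Theta$ as a limit of frames is not yet an argument. The frames $\Omega^{\otimes N}\prod_a f_a^{-q_a}$ of Lemma~\ref{lem:tubular-trapping} are attached to individual Liu--Sz\'ekelyhidi charts and Weierstrass normalizations. Their limits on overlapping charts of $C(Y)$ differ by arbitrary holomorphic units, so nothing makes them patch to a single section. The paper instead normalizes against the metric on the cone itself. By Anderson's theorem $\omega_{C(Y)}$ is Ricci-flat on the regular part. Hence in each (simply connected) chart meeting $\Delta$ one has $\omega_{C(Y)}^n=e^{G_\alpha}|u_\alpha|^{2(\beta_\alpha-1)}dV_\alpha$ with $G_\alpha$ pluriharmonic. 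Choose a holomorphic unit $h_\alpha$ with $|h_\alpha|^{2/N}=e^{G_\alpha}$ and set $\tau_\alpha:=h_\alpha(du_\alpha\wedge dv_{\alpha,1}\wedge\cdots\wedge dv_{\alpha,n-1})^{\otimes N}u_\alpha^{-N(1-\beta_\alpha)}$. Then $(\tau_\alpha\wedge\bar\tau_\alpha)^{1/N}=\omega_{C(Y)}^n$, so the transition functions are holomorphic of modulus one, hence constant; this is how the paper produces its section on $C(Y)\setminus A$. If you replace the limit of approximant frames with this normalization, and your vertex analysis with the reflexive-sheaf step above, your argument becomes the paper's.
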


\begin{proof}
   By \cite[Section~4]{SLII}, $C(Y)$ is an affine algebraic variety. The same quantitative $\varepsilon$-regularity and Siu argument applies on the
  tangent cone, with the same uniform angle gap. Hence its metric singular locus
is a closed analytic subvariety $V_{\mathcal S}(C(Y))$. Let
$D_1,\ldots,D_m$ be its irreducible codimension-one components. 

   Let $Z = C(Y)^{an, sing} \cup \Delta^{sing} \cup (S \backslash Supp(\Delta))$. By construction, $Z$ is an analytic subset of complex codimension at least two. On $C(Y) \backslash A$, every point is either metrically regular; or lies on a unique smooth component of $\Delta$ is an element in $S_2 \backslash S_4$. We can cover it by three type of charts 

    \begin{itemize}
    \item $U_\alpha \cap \Delta = \empty$, then $\omega^n_{C(Y)}  = e^{G_a} dV_a$
    \item $U_\alpha \cap \Delta = \{u_\alpha = 0 \}$, then $\omega_{C(Y)}^n = e^{G_\alpha}
|u_\alpha|^{2(\beta_\alpha-1)}\,dV_\alpha$
    \end{itemize}

    On compact subsets of $C(Y)^{reg}\setminus\operatorname{Supp}(\Delta)$, the
rescaled Einstein metrics converge smoothly by Anderson's regular-set
compactness theorem \cite{AndersonRicci}; since
$\operatorname{Ric}(r_i^{-2}\omega_\infty)=\lambda r_i^2(r_i^{-2}\omega_\infty)$,
the limit metric is Ricci-flat there. Hence $dd^cG_a=0$ on both types of charts. Thus, possibly after shrinking $U_a$ if needed, pick a holomorphic function $g_a$ such that $R(g_a) = G_a$. 

    Now, on a chart of the first type, define $\tau_\alpha := e^{-N g_\alpha} (dz_{\alpha,1} \wedge \cdots \wedge dz_{\alpha,n})^{\otimes N}$. And on a chart of the second type, define $\tau_\alpha := e^{-N g_\alpha} (du_\alpha \wedge dv_{\alpha,1} \wedge \cdots \wedge dv_{\alpha,n-1})^{\otimes N} u_\alpha^{-N(1-\beta_\alpha)}$. Here $\tau_\alpha$ is viewed as a holomorphic section of $\omega_{U_\alpha}^{\otimes N}(N\Delta|_{U_\alpha})$, and this is legitimate because $N(1-\beta_\alpha) \in \mathbb{Z}_{> 0}$.
    
    By construction, $(\tau_\alpha \otimes \bar{\tau}_\alpha)^{1/N} = \omega_{C(Y)}^n$ on $U_\alpha \setminus \operatorname{Supp}(\Delta)$. Therefore, on an overlap $U_\alpha \cap U_\beta \subset C(Y) \setminus A$, the ratio $h_{\alpha\beta} := \tau_\alpha / \tau_\beta$ is a nowhere-vanishing holomorphic function satisfying $|h_{\alpha\beta}| = 1$. Hence $h_{\alpha\beta}$ is constant. Thus the $\tau_\alpha$ glue to a nowhere-vanishing holomorphic section $\tau \in H^0(C(Y)\setminus A, \omega^{\otimes N}(N\Delta))$.

    Since $A$ has complex codimension at least $2$ and $C(Y)$ is normal, the reflexive sheaf $\omega_{C(Y)}^{[N]}(N\Delta)$ is the unique extension of $\omega^{\otimes N}(N\Delta)$ from $C(Y)\setminus A$. Therefore $\tau$ extends uniquely across $A$ to a section $s_\Delta \in H^0(C(Y), \omega_{C(Y)}^{[N]}(N\Delta))$. And the section extends as well by Shiffman's theorem. 
    
    This extension is nowhere vanishing: if it vanished somewhere, its zero locus would contain a codimension-one component, hence would meet $C(Y)\setminus A$, contradicting the fact that $\tau$ is nowhere vanishing there. It follows that $\omega_{C(Y)}^{[N]}(N\Delta)$ is locally free, generated locally by $s_\Delta$. Thus $N(K_{C(Y)}+\Delta)$ is Cartier, and $K_{C(Y)}+\Delta$ is $\mathbb{Q}$-Cartier.
    
    Finally, on $C(Y)^{reg}\setminus \operatorname{Supp}(\Delta)$, the identity $(s_\Delta \otimes \bar{s}_\Delta)^{1/N} = \omega_{C(Y)}^n$ holds by construction. If $f_\alpha$ is a local defining function for $E_j$ and $h_j$ is a Hermitian metric on $O(E_j)$, then $i\partial \bar{\partial} \log |f_\alpha|_{h_j}^2 = [E_j] - \Theta(h_j)$, so the local equation above is equivalent to $i\partial \bar{\partial} G_\alpha = \sum_j (1-\beta_j)\Theta(h_j)$ away from the divisor, as desired. 
    The same identity also shows that $(C(Y),\Delta)$ is klt: the local
log-canonical measure determined by $s_\Delta$ has finite mass because it agrees
with the metric volume measure. We will use this integrability in the uniqueness
argument below.
    
\end{proof}

The following lemma records the precise form of boundary convergence that will be
used later in the degeneration argument. The point is that the codimension-one
boundary is not tracked by Hausdorff-measure semicontinuity. Instead, we use the
local holomorphic charts from the structure theorem and the tubular trapping
lemma: in those charts the boundary is given by Weierstrass equations of
uniformly bounded degree, so Bishop compactness gives analytic limits of the
supports.

\begin{lemma}[Convergence of the codimension-one boundary under blow-up]
\label{lem:boundary-support-convergence}
Let $r_i\to0$ be such that
$(X_\infty,r_i^{-2}d_\infty,p)\to (C(Y),d_C,o)$ in the pointed
Gromov--Hausdorff sense. For $Z\in\{X_\infty,C(Y)\}$, write
$S_2(Z):=\mathcal S^{2n-2}(Z)$ and $S_4(Z):=\mathcal S^{2n-4}(Z)$, and set
$S'(Z):=S_2(Z)\setminus S_4(Z)$. Define
$\Delta_Z:=\overline{S'(Z)}$, where the closure is taken in the complex analytic
structure given by the structure theorem.

Then $\Delta_Z^{reg}\cap S'(Z)$ is dense in $\Delta_Z$, and
$\Delta_Z\setminus(\Delta_Z^{reg}\cap S'(Z))$ has locally finite
$H^{2n-4}$-measure.

Moreover, the supports $\Delta_{X_\infty}$, viewed inside the rescaled spaces
$(X_\infty,r_i^{-2}d_\infty,p)$, converge locally in the Kuratowski sense to
$\Delta_{C(Y)}$. Equivalently, after choosing any pointed
Gromov--Hausdorff realization of the convergence, the following two statements
hold.

\begin{enumerate}
\item If $x_i\in\Delta_{X_\infty}$ and $x_i$, viewed in
$(X_\infty,r_i^{-2}d_\infty,p)$, converges to $q\in C(Y)$, then
$q\in\Delta_{C(Y)}$.

\item If $q\in\Delta_{C(Y)}$, then there exist points
$x_i\in\Delta_{X_\infty}$ whose images in the rescaled spaces converge to $q$.
\end{enumerate}
\end{lemma}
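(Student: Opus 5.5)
The plan has two parts: a static part about the structure of $\Delta_Z$, and a dynamic part about Kuratowski convergence under blow-up.

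\textbf{Static part.} Fix $Z\in\{X_\infty,C(Y)\}$.
\begin{enumerate}
\item By the main structure theorem of this section (and its tangent-cone analogue, Theorem~\ref{thm:tangent-cone-log-structure}), the metric singular set of $Z$ is a closed analytic subvariety $V_{\mathcal S}$. By Lemma~\ref{lem:local-structure}, every point of $S'(Z)$ has a chart in which $S'(Z)$ coincides with a smooth hypersurface $\{u=0\}$. Hence $S'(Z)$ is contained in the union of the codimension-one components $D_1,\dots,D_m$ of $V_{\mathcal S}$, and it is relatively open there.
\item Conversely, the density argument in the proof of that theorem shows that $D_i^\circ=D_i^{reg}\cap Z^{reg}\cap S'(Z)$ is dense in each $D_i$. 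It follows that $\Delta_Z=\bigcup_i D_i$ and that $\Delta_Z^{reg}\cap S'(Z)$ is dense.
\item The complement of $\Delta_Z^{reg}\cap S'(Z)$ in $\Delta_Z$ is contained in
\[
\operatorname{Sing}(\Delta_Z)\cup Z^{sing}\cup\bigl(S_4(Z)\cap\Delta_Z\bigr).
\]
The first two sets are analytic of complex dimension at most $n-2$, so they have locally finite $H^{2n-4}$-measure.
\item For the third set, use that a point of $\Delta_Z\cap Z^{reg}$ lying on $D_i^{reg}$ outside $\bigcup_{j\neq i}D_j$ has a Lemma~\ref{lem:local-structure} chart. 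There the tangent cone is $\mathbb C_\gamma\times\mathbb C^{n-1}$, so the point is not in $S_4$. Hence $S_4\cap\Delta_Z\subset\operatorname{Sing}(\Delta_Z)\cup Z^{sing}\cup(\text{metric singular points on }D_i^{reg})$, and the last piece is an analytic subset of $D_i$ that is nowhere dense, hence of dimension at most $n-2$.
\end{enumerate}
The one subtle point is that Lemma~\ref{lem:local-structure} only covers points already known to be in $S_2\setminus S_4$. I would instead use the quantitative regularity: the set of points of $D_i$ where the density drops below $\beta_i-\epsilon$ is analytic via Siu, as in the structure theorem.

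\textbf{Dynamic part, statement (1): limits of boundary points lie in the boundary.} Let $x_i\in\Delta_{X_\infty}$ converge to $q\in C(Y)$. The volume-density gap gives the following: every point of $\Delta_{X_\infty}$ has density at most $1-\delta_{\mathrm{ang}}$ at all scales below the local scale, and the Bishop--Gromov ratio is monotone. By volume convergence (Colding), if $q$ were metrically regular, then for fixed small $s$ we would have
\[
\operatorname{vol}B_{s}(x_i)/\omega_{2n}s^{2n}\to1
\]
in the rescaled spaces. By monotonicity this forces the density at $x_i$ to be close to $1$, a contradiction. So $q\in\mathcal S(C(Y))$.

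To land in $\Delta_{C(Y)}$ rather than in $S_4(C(Y))\setminus\Delta_{C(Y)}$, I would argue by contradiction. Suppose $q$ has a neighborhood meeting $\mathcal S(C(Y))$ only in a set of codimension at least four. Apply the Liu--Sz\'ekelyhidi charts at $q$ to the rescaled $X_\infty$ and use the tubular trapping argument of Lemma~\ref{lem:tubular-trapping}. Near $q$, the divisor $\Delta_{X_\infty}$ has bounded-degree Weierstrass equations, so it converges as an analytic set by Bishop compactness. Its limit is a nonempty divisor whose generic points have density at most $1-\delta_{\mathrm{ang}}$, again by volume convergence. This divisor is therefore contained in $\mathcal S(C(Y))$ and has codimension two, which is a contradiction.

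\textbf{Dynamic part, statement (2): every boundary point is a limit.} It suffices to treat $q\in\Delta_{C(Y)}^{reg}\cap S'(C(Y))$, since this set is dense and Kuratowski limits are closed. Near such $q$ the cone is modeled on $\mathbb C_\gamma\times\mathbb C^{n-1}$ with $\gamma<1$. Suppose no points of $\Delta_{X_\infty}$ approach $q$. Then on a fixed rescaled ball around $q$, the space $X_\infty$ is smooth Kähler--Einstein with $\operatorname{Ric}\to0$ on the $\Delta$-free region. Smooth points have density $1$, so Anderson $\epsilon$-regularity together with Cheeger--Colding gives a codimension-two singular limit only if there is curvature concentration. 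The transverse holonomy computation of Lemma~\ref{lem:tubular-trapping} excludes this: on a small transverse disc the holonomy in the limit is $\exp(2\pi\sqrt{-1}(1-\gamma))\neq1$, while without divisor points it equals $\exp(2\pi\sqrt{-1}\int\rho)\to1$, since $\int\rho=O(r_i^2)$. This is a contradiction.

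I expect this holonomy step to be the main obstacle. It requires the Liu--Sz\'ekelyhidi charts at $q$, constructed on $X_\infty$ itself via the polarized approximation, and the area bound for transverse discs coming from Chern--Levine--Nirenberg.
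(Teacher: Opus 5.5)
Your static part uses the same dimension count as the paper, and your argument for (2) is the paper's transverse-holonomy argument. The gap is in the second half of (1).

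\textbf{The gap in (1).} Once $B_{2\delta}(q)\cap\Delta_{C(Y)}=\emptyset$, your first step gives $q\in\mathcal S(C(Y))$. Hence $q\in S_4(C(Y))$, and no tangent cone at $q$ has the form $\mathbb C_\gamma\times\mathbb C^{n-1}$. Lemma~\ref{lem:tubular-trapping} is proved only at points with that model, and both of its conclusions depend on it. The trapping pushes the divisors toward the hyperplane $\{u=0\}$ of the model. The bound on the Weierstrass degree comes from the holonomy around a disc transverse to that hyperplane, which converges to $\exp(2\pi\sqrt{-1}(1-\gamma))$. At an $S_4$-point there is neither such a hyperplane nor such a transverse holonomy. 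So your claim that $\Delta_{X_\infty}$ has bounded-degree Weierstrass equations near $q$ is unsupported, and so is the appeal to Bishop compactness. Without some replacement, you have not ruled out that the rescaled divisors shrink into the codimension-four singular set.

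\textbf{A possible repair.} You do not need a degree bound. Your density-gap and volume-convergence step applies to every sequence of boundary points. So for large $i$, all of $\Delta_{X_\infty}$ in the rescaled ball around $q$ lies in an $\epsilon_i$-neighbourhood of $V:=\mathcal S(C(Y))\cap\overline{B_\delta(q)}$, with $\epsilon_i\to0$. Here $V$ is analytic of dimension at most $n-2$, so this is a trapping statement with $\{u=0\}$ replaced by $V$. Work in holomorphic charts around $x_i$ that converge to an embedding of a neighbourhood of $q$. Take a generic linear projection $\pi$ to $\mathbb C^{n-1}$ that is finite on $V$. Take a polydisc $\Delta'\times\Delta''$ around the image of $q$ with $V\cap(\overline{\Delta'}\times\partial\Delta'')=\emptyset$. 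For large $i$ the divisors miss $\overline{\Delta'}\times\partial\Delta''$, so $\pi$ is proper on their part inside the polydisc. By Remmert's theorem their image is then all of $\Delta'$. But that image also lies in a shrinking neighbourhood of the nowhere dense set $\pi(V)$, which is a contradiction.

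\textbf{The paper's route.} The paper argues metrically instead. It compares the Cheeger--Naber bound $\operatorname{Vol}(T_\rho(\mathcal S(C(Y)))\cap B_\delta(q))\le C\rho^{4-\eta}$ with a lower bound $c\rho^2\delta^{2n-2}$ for tubes around $\Delta_{X_\infty}$. That route also needs the divisor tubes to be pushed into the singular tube, which is exactly what your volume-convergence step supplies. It further needs the tube lower bound to hold uniformly at the rescaled scale.

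\textbf{An omission in (2).} The rescaled ball around $q$ is only disjoint from $\Delta_{X_\infty}$; it need not be smooth. The set $\mathcal S(X_\infty)\setminus\Delta_{X_\infty}$, of complex codimension at least two, may still meet it. You must perturb the transverse discs to avoid this set before computing the holonomy from $\int\rho$. The paper does this with a transversality argument.
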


\begin{proof}
We first prove the inclusions $(\Delta_Z^{smooth} \cap S'_Z) \subset \Delta_Z^{smooth} \subset  \overline{S_Z'} = \Delta_Z$. That $(\Delta_Z^{smooth} \cap S'_Z) \subset \Delta_Z^{smooth}$ and $S_Z \subset \overline{S'_Z}$ is tautological; First, we show that $\Delta_Z^{smooth} \cap S'Z$ is dense in $\Delta_Z^{smooth}$. Let $\Sigma_Z := Z \setminus (Z^{reg} \cup S_2(Z))$. By the structure theorem on $X\infty$, and by its cone-side analogue on $C(Y)$, we have $\dim_H(\Sigma_Z) \leq 2n-4$. Suppose that $\Delta_Z^{smooth} \cap S'_Z$ is not dense in $\Delta_Z^{smooth}$. Then there is a nonempty relatively open set $U \subset \Delta_Z^{smooth}$ such that $U \cap S'_Z = \emptyset$. Since $\Sigma_Z$ has Hausdorff codimension at least $2$, it has empty interior in the smooth hypersurface $\Delta_Z^{smooth}$, so after shrinking $U$ we may assume $U \cap \Sigma_Z = \emptyset$. Now $U \subset \Delta_Z \subset S$, hence every point of $U$ is singular, and since $U \cap \Sigma_Z = \emptyset$, every point of $U$ lies in $S_2(Z)$. But $U \cap S'_Z = \emptyset$, so in fact $U \subset S_4(Z)$. This is impossible, since $U$ has real dimension $2n-2$, whereas $\dim_H S_4(Z) \leq 2n-4$. Thus $\Delta_Z^{smooth} \cap S'_Z$ is dense in $\Delta_Z^{smooth}$.

Now we show that $S'_Z \subset \Delta_Z$. Indeed, this follows from the structure theorem: If $p \in S'_Z$, then there is a neighborhood $U$ of $p$ such that $U \cap S \subset \Delta_Z$; thus $\overline{S'_Z} \subset \overline{\Delta_Z} = \Delta_Z$. Which proves both of the inclusions. $H^{2n-4}(\Delta_Z \backslash (\Delta_Z^{smooth} \cap S'_Z)) < \infty $ follows easily from the proofs.

We first prove (1). By density of $\Delta_{X_\infty}^{smooth}\cap S'(X_\infty)$
in $\Delta_{X_\infty}$, it is enough to consider sequences
$x_i\in \Delta_{X_\infty}^{smooth}\cap S'(X_\infty)$, after changing $x_i$ by
$o(r_i)$ if necessary. Suppose that $x_i$, viewed in
$(X_\infty,r_i^{-2}d_\infty,p)$, converges to $q\in C(Y)$, and assume
$q\notin\Delta_{C(Y)}$. Since $\Delta_{C(Y)}$ is closed, choose
$\delta>0$ such that $B_{2\delta}(q)\cap\Delta_{C(Y)}=\emptyset$.

Then every singular point in $B_{2\delta}(q)$ lies in the higher stratum
$S_4(C(Y))$. Equivalently, there is no genuine codimension-two cone point in
$B_{2\delta}(q)$. By compactness of the quantitative stratification, after
shrinking $\delta$ and choosing $\eta>0$, all sufficiently small balls in
$B_\delta(q)$ which are singular are contained in the effective
$(2n-4)$-stratum $\mathcal S^{2n-4}_{\eta,\rho}(C(Y))$. Hence the Cheeger--Naber quantitative-stratification estimate
\cite{CheegerNaberQuant} gives, for all small $\rho$,
\[
        \operatorname{Vol}\bigl(T_\rho(S(C(Y)))\cap B_\delta(q)\bigr)
        \leq C\rho^{4-\eta}.
\],
Here the volume is the $2n$-dimensional Hausdorff measure of the Ricci-limit
space.

On the other hand, each $x_i\in \Delta_{X_\infty}^{smooth}\cap S'(X_\infty)$
has a neighborhood modeled on
$\mathbb C_{\beta_i}\times\mathbb C^{n-1}$, with
$\beta_i\leq 1-\frac1{N_{\mathcal F}}$. Since the cone angles belong to a finite
set, the local structure theorem gives a uniform lower tubular-volume bound:
there are constants $c>0$ and $\rho_0>0$, independent of $i$, such that for
$0<\rho<\rho_0$,
\[
        \operatorname{Vol}\bigl(T_\rho(\Delta_{X_\infty})\cap
        B_\delta(x_i)\bigr)
        \geq c\,\rho^2\delta^{2n-2}
\]
in the rescaled metric $r_i^{-2}d_\infty$.

Passing to the pointed Gromov--Hausdorff limit, the left-hand side is forced
into the $\rho$-neighborhood of the singular set of $C(Y)$ inside
$B_{2\delta}(q)$. Thus, for $i$ sufficiently large,
$c\,\rho^2\delta^{2n-2}\leq C\rho^{4-\eta}$. This is impossible for $\rho>0$
small, since $4-\eta>2$. Therefore $q\in\Delta_{C(Y)}$.

We now prove $(2)$. Again it suffices to show for $ q \in \Delta_{C(Y)}^{smooth} \cap S'Y$. The tangent cone at $q$ then splits locally as $\mathbb{C}\beta \times \mathbb{C}^{n-1}$. Let $D$ be a small transverse disk centered at $q$ intersecting $\Delta_{C(Y)}$ only at $q$; this can be done by the smoothness assumption, and let $\gamma = \partial D$.  The holonomy around $\gamma$ is $\exp(2 \pi i (1 - \beta)) \neq 1$. Assume for contradiction that no sequence $\Delta_{X_\infty}$ converges to $q$. Then there exists a radius $\delta > 0$ such that for all large $i$, the preimage of $B_\delta(q)$ in the rescaled space $(X_\infty, r_i^{-2} \omega_\infty)$ is entirely disjoint from $\Delta_{X_\infty}$. Which means (identify $B_\delta(q)$ with the preimage) that $B_\delta(q)$ only intersects $S_4$ or higher codimension strata of $X_\infty$. But observe that $S \backslash \Delta_{X_\infty}$ is a complex subvariety $W$ of complex codimension at least $2$. Thus $B_\delta(q) \cap S$ is contained in a complex analytic set of codimension at least $2$ (alternatively, one can use the Minkowski dimension bounds as proved in Cheeger-Jiang-Naber). Thus by standard Thom transversality for stratified spaces, a generic smooth map from a two dimensional disc to a $2n$-dimensional space would completely avoid $W$; thus we can perturb $D_i$ by an arbitrarily small amount such that it completely avoids $W$. Now that $D_i$ is fully regular; we compute: $Hol(\gamma_i) = \exp(\int_{D_i} Ric(r_i^{-2} \omega_\infty)) \leq \exp(C \lambda r_i^2) \rightarrow 1$. This contradicts, since by Anderson's theorem, $\partial D_i$ is contained in the smooth set of both the spaces so the holonomy should converge.
\end{proof}

\section{Analytic inputs from the metric cone}

Fix $p\in X_\infty$ throughout this section. We study the space
$\mathcal C_p$ of all tangent cones at $p$. Fix
$C(Y)\in\mathcal C_p$, realized by a sequence $r_i\to0$ of blow-ups
\[
(X_\infty,r_i^{-2}\omega_\infty,p)
\longrightarrow
(C(Y),\omega_{C(Y)},o).
\]
Let $\xi_0$ be the Reeb vector field of $\omega_{C(Y)}$, $Y$ be the link at
$\{r=1\}$, and let $T_\xi := \overline{\{\exp(t\xi_0):t\in \mathbb R\}} \subset \operatorname{Aut}(C(Y),\Delta)$ be the compact torus generated by the Reeb flow.

By \cite[Theorem 1 and Section 4]{SLII} (or equivalently following
the Donaldson--Sun construction in \cite[Section 2.3]{DSII}) the cone $C(Y)$
admits a $T_\xi$-equivariant affine embedding $\Psi:C(Y)\hookrightarrow \mathbb C^N$ by homogeneous holomorphic functions. Under this embedding, the homothetic
$\mathbb R_{>0}$-action, and hence the Reeb action, extends to a linear torus
action on $\mathbb C^N$.

Moreover, by \cite[Lemma 2.5]{HeSunFrankel}, as used in
\cite[Appendix, p.30]{DSII}, for every
$\xi\in (\mathbb R_{>0})^N$ there is a $T^N$-invariant K\"ahler cone metric
$\omega_\xi$ on $\mathbb C^N\setminus\{0\}$, with Reeb vector field $\xi$.
Restricting $\omega_\xi$ to $C(Y)$ gives a $T_\xi$-invariant K\"ahler cone
metric on $C(Y)$ whenever $\xi\in \mathfrak t_\xi^+ :=
\mathfrak t_\xi\cap (\mathbb R_{>0})^N$. We write $\omega_\xi=\frac{1}{4}dd^c r_\xi^2.$
For $\xi\in \mathfrak t_\xi^+$, define
\[
        V(\xi):=
        \int_{C(Y)}\exp\left(-\frac{r_\xi^2}{2}\right)
        (dd^c r_\xi^2)^n .
\]
Up to a dimensional normalization, this is the volume functional of the link.

We have the following variational formula.

\begin{lemma}[\cite{DSII}, Lemma 4.1]
Let $ d\mu_\xi:=\exp\left(-\frac{r_\xi^2}{2}\right) (dd^c r_\xi^2)^n, \eta_\xi:=d^c\log r_\xi$.
Then  $dV_\xi(\delta\xi) =  -n\int_{C(Y)}\eta_\xi(\delta\xi)\,d\mu_\xi$
and $\operatorname{Hess}V_\xi(\delta\xi,\delta'\xi)  =  n(n+1)\int_{C(Y)}  \eta_\xi(\delta\xi)\eta_\xi(\delta'\xi)\,d\mu_\xi$.

In particular, $V$ is strictly convex on $\mathfrak t_\xi^+$.
\end{lemma}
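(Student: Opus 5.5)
The plan is to follow the Donaldson--Sun/Martelli--Sparks--Yau computation. First we reduce $V$ to an integral over the cone that can be differentiated in $\xi$. Then we differentiate twice, trading each derivative for an insertion of the Hamiltonian $\eta_\xi(\delta\xi)$. Finally we read off convexity from positivity of the resulting quadratic form.

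\textbf{Step 1: formal set-up.} The boundary $\Delta$ plays no role here. Two facts make this work:
\begin{itemize}
\item $C(Y)$ is a normal affine variety embedded $T_\xi$-equivariantly in $\mathbb C^N$, by Theorem~\ref{thm:tangent-cone-log-structure} and the embedding $\Psi$.
\item The metrics $\omega_\xi$ are restrictions of smooth $T^N$-invariant cone metrics on $\mathbb C^N\setminus\{0\}$.
\end{itemize}
The singular set of $C(Y)$ has complex codimension at least one and carries no mass for $(dd^c r_\xi^2)^n$, since these are smooth forms restricted to an analytic set. Stokes' theorem is legitimate on $C(Y)^{reg}$: cut off near the singular locus and near the vertex, and note that the boundary terms vanish. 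Near the vertex this follows from homogeneity. Near the singular set it follows from the capacity/cutoff property of analytic sets of positive codimension, which is standard for integrals of smooth forms restricted to analytic varieties; it does not need the metric cutoff of Section~3. The Gaussian factor handles infinity.

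\textbf{Step 2: first variation.} For $\xi\in\mathfrak t_\xi^+$, write $r_\xi^2$ as the Hamiltonian-type potential and use the identity $\mathcal L_{J\xi}$-homogeneity: $r_\xi$ is homogeneous of degree one under $\exp(tJ\xi)$. Following \cite{DSII} and Martelli--Sparks--Yau, express $V(\xi)$ via the Duistermaat--Heckman formula as $\int e^{-\langle\mu,\xi\rangle}\,d\mathrm{DH}$. Here $\mu$ is the moment map of the $T_\xi$-action for the fixed symplectic structure, obtained after Moser-type identification of the $\omega_\xi$, and $\langle\mu,\delta\xi\rangle=\tfrac12 r_\xi^2\,\eta_\xi(\delta\xi)$. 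The key point is that $V(\xi)$ depends only on $\xi$ and not on the choice of compatible cone metric. This follows because variations of $r_\xi$ at fixed $\xi$ change the integrand by an exact form, by Step~1's Stokes argument. Differentiating under the integral then gives $-\tfrac12\int r_\xi^2\eta_\xi(\delta\xi)\,d\mu_\xi$. Integrating by parts in the radial variable, using homogeneity of degree $2n$ of $(dd^c r^2)^n$, converts this to the stated $-n\int\eta_\xi(\delta\xi)\,d\mu_\xi$.

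\textbf{Step 3: second variation and convexity.} Differentiating again produces $\int\tfrac14 r^4\eta(\delta\xi)\eta(\delta'\xi)\,d\mu$. The radial Gaussian moments $\int_0^\infty r^{2n+3}e^{-r^2/2}\,dr$ versus $\int_0^\infty r^{2n-1}e^{-r^2/2}\,dr$ yield the factor $n(n+1)$. Since $d\mu_\xi$ is a positive measure with full support on the regular part, the Hessian is positive semidefinite. It is strictly positive because $\eta_\xi(\delta\xi)=0$ a.e. forces $\delta\xi$ to vanish as a vector field on $C(Y)$; this uses that $T_\xi$ acts effectively and that $\mathfrak t_\xi$ embeds in the vector fields on $C(Y)$.

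\textbf{Main obstacle.} The delicate step is justifying independence of $V$ from the metric in the class, i.e. the vanishing of boundary terms along $C(Y)^{sing}$ and at the vertex. I would first reduce this to computing $V$ as the leading coefficient of the equivariant index (Hilbert series) of $\mathcal O_{C(Y)}$, as in \cite{DSII}. That expression is manifestly algebraic, depends only on $\xi$, and its convexity follows from the integral formula above.
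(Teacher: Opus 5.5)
Your proposal is correct in outline, but it takes a different route from the paper. The paper, following \cite[Lemma~4.1]{DSII}, varies the actual cone potentials. It writes $\delta(r_\xi^2)=r_\xi^2\varphi$ and varies the homogeneity identity $\mathcal L_{r\partial_r}r_\xi^2=2r_\xi^2$ to get $d^c\varphi(\xi)=-2\eta_\xi(\delta\xi)$. It then differentiates $V$ and integrates by parts, treating the singularities of $C(Y)$ on an equivariant log resolution.

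You instead freeze the K\"ahler form $\omega_{\xi_0}$, as in \cite{MartelliSparksYau}. Then $V(\xi)$ is, up to a constant, $\int e^{-\langle\mu_0,\xi\rangle}\omega_{\xi_0}^n$, and all dependence on $\xi$ is linear in the exponent. This buys you several things:
\begin{itemize}
\item The two derivatives are immediate.
\item The constants $n$ and $n(n+1)$ come from the Gaussian radial moments, and your computation of them is right.
\item Positivity of the Hessian is manifest. Strictness follows because $\tfrac12 r_\xi^2\eta_\xi(\delta\xi)$ is the Hamiltonian of $\delta\xi$.
\item All the analysis is concentrated in a single invariance identity, whereas the paper integrates by parts at each order of differentiation.
\item Your Stokes justification is lighter than the resolution argument. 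Every form involved is a restriction of a smooth form on $\mathbb C^N\setminus\{0\}$, so closedness of the integration current $[C(Y)]$, Gaussian decay, and homogeneity at the vertex suffice.
\end{itemize}

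That invariance identity needs to be stated more precisely than in your Step~2. Independence of $V(\xi)$ from the choice of cone metric with Reeb field $\xi$ is not what you need. The reason is that $\omega_{\xi_0}$, with Hamiltonian $\langle\mu_0,\xi\rangle=\tfrac12 r_{\xi_0}^2\eta_{\xi_0}(\xi)$, is not a cone metric with Reeb field $\xi$.

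What you need is the following. Set $\Phi_s=\tfrac14\bigl((1-s)r_{\xi_0}^2+s\,r_\xi^2\bigr)$ and $H_s=\iota_\xi d^c\Phi_s$, so that $H_0=\langle\mu_0,\xi\rangle$ and $H_1=\tfrac12 r_\xi^2$. Then $\int e^{-H_s}(dd^c\Phi_s)^n$ should be independent of $s$. This holds because $\partial_s(dd^c\Phi_s-H_s)=d_\xi(d^c\dot\Phi_s)$ with $d_\xi=d-\iota_\xi$, and the boundary terms vanish by the Stokes argument above.

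With this identity no Moser isotopy is needed, which matters because such an isotopy is not obviously available near $C(Y)^{sing}$. Your index-character fallback does not bypass this step. It gives metric-independence of $V$. But relating the $\xi$-derivatives of $V$ to $\eta_\xi(\delta\xi)$ still requires identifying the asymptotic weight measure with the pushforward of $\omega_{\xi_0}^n$ under $\mu_0$, and that is the same Stokes identity. (Minor point: by normality, $C(Y)^{sing}$ has codimension at least two, not one.)
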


\begin{proof}[Sketch]
This is exactly the computation in \cite[Lemma 4.1]{DSII}. One writes
\[
        \delta(r_\xi^2)=r_\xi^2\varphi .
\]
Varying the identity $\mathcal L_{r\partial_r}r_\xi^2=2r_\xi^2$ gives
\[
        d^c\varphi(\xi)=-2\eta_\xi(\delta\xi).
\]
Then one differentiates
\[
        V(\xi)=\int_{C(Y)}
        e^{-r_\xi^2/2}(dd^c r_\xi^2)^n
\]
and integrates by parts on the normal affine cone. The same proof works in the
present log setting because the boundary divisor $\Delta$ is not used in this
calculation: the formula depends only on the normal affine variety, the
equivariant embedding, and the ambient $T^N$-invariant cone metrics. Singularities
are handled exactly as in the appendix of \cite{DSII}, by performing the
calculation on a $T_\xi$-equivariant log resolution and using that the exceptional
terms are integrable.
\end{proof}

By Theorem~\ref{thm:tangent-cone-log-structure},
$K_{C(Y)}+\Delta$ is $\mathbb Q$-Cartier. Choose $N$ such that $N(K_{C(Y)}+\Delta)$ is Cartier and trivial, and choose the
nowhere-vanishing log-canonical trivializing section
$s_\Delta\in H^0\left(C(Y), \mathcal O_{C(Y)}(N(K_{C(Y)}+\Delta))\right)$
constructed in the proof of Theorem~\ref{thm:tangent-cone-log-structure}.

The torus $T_\xi$ preserves both $C(Y)$ and $\Delta$, hence it acts on
$\mathcal O_{C(Y)}(N(K_{C(Y)}+\Delta))$. For $t\in T_\xi$, the section
$t^*s_\Delta$ is again a nowhere-vanishing trivialization. Therefore $t^*s_\Delta=u_t s_\Delta$ for some unit $u_t\in H^0(C(Y),\mathcal O_{C(Y)})^*$. Since $C(Y)$ is a normal
affine cone with vertex as the unique closed orbit, its only regular units are
constants. Hence $u_t\in\mathbb C^*$, and $s_\Delta$ is a $T_\xi$-eigensection.

Equivalently, there is a linear character $\chi_\Delta:\mathfrak t_\xi\rightarrow \mathbb R$  such that $\mathcal L_\xi s_\Delta=\sqrt{-1}\chi_\Delta(\xi)s_\Delta$.
In particular, the normalized Reeb hyperplane is $H_\Delta:=\{\xi\in\mathfrak t_\xi^+:\chi_\Delta(\xi)=nN\}.$

On $C(Y)\setminus\operatorname{Supp}(\Delta)$, note that the conical Ricci-flat equation is
equivalent to $ \omega_{C(Y)}^n = C\,(s_\Delta\wedge \bar s_\Delta)^{1/N}$
where $s_\Delta$ is viewed as a log-volume form. We now set up the appropriate functionals and functional spaces to do the variations. 

\begin{definition}
On $Y$, we have the data $(\xi_0,\eta,\omega^T)$, where
$\omega^T=\frac12 d\eta$ is the transverse K\"ahler structure. Let $\mathcal H$
be the space of bounded basic transverse K\"ahler potentials.

Let $\Omega_\Delta$ be the $T_\xi$-invariant measure on $Y$ defined by $dr\wedge \Omega_\Delta  = (s_\Delta\wedge \bar s_\Delta)^{1/N}$.
For any $\phi\in\mathcal H$, define the log Ding functional
\[
        D_\Delta(\phi)
        =
        I(\phi)-\log\int_Y e^{-\phi}\Omega_\Delta ,
\]
where
\[
        I(\phi)=
        -\frac{1}{nV(\xi)}
        \sum_{j=0}^{n-1}
        \int_Y
        \phi\,(d\eta)^j\wedge
        (d\eta+dd^c\phi)^{n-1-j}\wedge \eta .
\]
\end{definition}

 Observe that the Euler--Lagrange equation of $D_\Delta$ is $(d\eta+dd^c\phi)^{n-1}\wedge\eta =
 C e^{-\phi}\Omega_\Delta$; Equivalently, the cone metric $\omega_\phi = \frac14 dd^c(r^2e^\phi)$
is Ricci-flat with cone singularities along $\Delta$ and Reeb field $\xi_0$.

\begin{lemma}[Log Futaki variation]
Let $\delta\xi\in T_{\xi_0}H_\Delta$. Let $f_t$ be the one-parameter family of
holomorphic transformations generated by $J\delta\xi$, and let $\phi(t)$ be the
corresponding path of transverse K\"ahler potentials. Then $\frac{d}{dt}D_\Delta(\phi(t))  = -\int_{C(Y)}\mathcal L_{J\delta\xi}h_\Delta\,d\mu_{\xi_0}$
where  $h_\Delta := -\log \frac{(s_\Delta\wedge\bar s_\Delta)^{1/N}}  {\omega_{\xi_0}^n}$.
In particular, at the conical Ricci-flat metric, the log Futaki invariant vanishes,
and therefore $\xi_0$ is a critical point of $V|_{H_\Delta}$.
\end{lemma}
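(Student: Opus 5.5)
The plan follows the Martelli--Sparks--Yau / Donaldson--Sun computation of the derivative of the Ding functional along a holomorphic flow. I will carry it out on $C(Y)\setminus\operatorname{Supp}(\Delta)$ and control the boundary using the klt integrability from Theorem~\ref{thm:tangent-cone-log-structure}.

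\emph{Step 1: computing the derivative.} Since $\delta\xi$ lies in the Lie algebra $\mathfrak t_\xi$, it commutes with $\xi_0$. The flow $f_t$ of $J\delta\xi$ therefore preserves the Reeb field, and $f_t^*\omega_{\xi_0}$ is again a cone metric with Reeb field $\xi_0$. I write $f_t^*r^2=r^2e^{\phi(t)}$ with $\phi(t)$ basic; boundedness follows from homogeneity on the compact link. The first-variation formula for the Monge--Amp\`ere energy $I$ gives
\[
\frac{d}{dt}I(\phi)=-\frac{1}{V}\int_Y\dot\phi\,(d\eta_\phi)^{n-1}\wedge\eta_\phi .
\]
Next, because $f_t$ preserves $C(Y)$ and $\Delta$, and $s_\Delta$ is a $T_\xi$-eigensection, I get
\[
f_t^*(s_\Delta\wedge\bar s_\Delta)^{1/N}=e^{c\,t\,\chi_\Delta(\delta\xi)}(s_\Delta\wedge\bar s_\Delta)^{1/N}.
\]
Here the real part of the character comes from $J\delta\xi$. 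The tangency condition $\delta\xi\in T_{\xi_0}H_\Delta$ says exactly that $\chi_\Delta(\delta\xi)=0$. So this log volume form is invariant under the flow. After the change of variables $f_t$, the second term of $D_\Delta$ has derivative equal to a weighted integral of $\dot\phi$ against $e^{-\phi}\Omega_\Delta$.

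\emph{Step 2: identifying the two terms.} I will evaluate at $t=0$. There $\dot\phi=\mathcal L_{J\delta\xi}\log r^2$, which up to normalization is the Hamiltonian $\eta_{\xi_0}(\delta\xi)$. I then transfer from $Y$ to $C(Y)$ using the Gaussian weight $d\mu_{\xi_0}$, as in the proof of \cite[Lemma 4.1]{DSII}. The $I$-term becomes $-n\int\eta(\delta\xi)d\mu$, which is $dV(\delta\xi)$. The log term becomes $\int\eta(\delta\xi)e^{-h_\Delta}d\mu$. The difference of the two terms equals $\int e^{-h_\Delta}\mathcal L_{J\delta\xi}(\cdot)$. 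Integrating by parts with respect to the vector field $J\delta\xi$, which is holomorphic and has divergence controlled by the Hamiltonian, turns this into $-\int_{C(Y)}\mathcal L_{J\delta\xi}h_\Delta\,d\mu_{\xi_0}$.

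\emph{Step 3: the main obstacle.} The delicate point is justifying the integration by parts across $\operatorname{Supp}\Delta$ and $S_4$. I will use cutoff functions $\chi_\epsilon$ supported away from $\operatorname{Supp}\Delta\cup S_4$ with $\int|\nabla\chi_\epsilon|^2\to0$. These exist by the codimension-two structure from Lemma~\ref{lem:local-structure}, applied on the cone, together with the capacity-zero property of $S_4$ (compare \cite[Sections 2.1--2.3]{CDSII}). The error terms are of the form $\int|\nabla\chi_\epsilon|\,|J\delta\xi|\,e^{-h_\Delta}$. They vanish in the limit because $e^{-h_\Delta}d\mu$ is the metric volume, which is finite by the klt statement, and because $|J\delta\xi|\le Cr$ on bounded sets. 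Alternatively, I would perform the computation on a $T_\xi$-equivariant log resolution, as in the lemma above.

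\emph{Step 4: conclusion.} At the conical Ricci-flat metric, $\omega_{\xi_0}^n=C(s_\Delta\wedge\bar s_\Delta)^{1/N}$, so $h_\Delta$ is constant and the derivative vanishes. This is the vanishing of the log Futaki invariant. By Steps 1--2, the derivative also equals $dV_{\xi_0}(\delta\xi)$, up to a nonzero constant, for every $\delta\xi\in T_{\xi_0}H_\Delta$. Hence $\xi_0$ is a critical point of $V|_{H_\Delta}$.
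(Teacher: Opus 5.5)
Your architecture is the same as the paper's. You differentiate $D_\Delta$ along the flow of $J\delta\xi$ to obtain the log Futaki invariant, note that it vanishes at the conical Ricci-flat metric, and convert this into $d(V|_{H_\Delta})(\delta\xi)=0$. The gap is in that last conversion.

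In Step~4 you say that, by Steps~1--2, the derivative is a nonzero multiple of $dV_{\xi_0}(\delta\xi)$. But Step~2 writes the derivative as two pieces. The first is the $I$-term, a multiple of $\int_{C(Y)}\eta(\delta\xi)\,d\mu_{\xi_0}$ and hence of $dV_{\xi_0}(\delta\xi)$. The second is the normalized log term $\int\eta(\delta\xi)e^{-h_\Delta}\,d\mu_{\xi_0}/\int e^{-h_\Delta}\,d\mu_{\xi_0}$, and you never show it vanishes.

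This matters. At the conical Ricci-flat metric $e^{-h_\Delta}$ is constant, so the two pieces cancel identically; this is just the Euler--Lagrange equation of $D_\Delta$. The vanishing of the derivative there therefore says nothing about $dV_{\xi_0}(\delta\xi)$. The phrase ``the difference of the two terms equals $\int e^{-h_\Delta}\mathcal L_{J\delta\xi}(\cdot)$'' does not supply the missing identity.

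The missing step uses the invariance you already recorded in Step~1; you just did not extract it from the change of variables. You have $f_t^*r^2=r^2e^{\phi(t)}$, and $\chi_\Delta(\delta\xi)=0$ gives $f_t^*(s_\Delta\wedge\bar s_\Delta)^{1/N}=(s_\Delta\wedge\bar s_\Delta)^{1/N}$. Hence
$\int_{C(Y)}e^{-f_t^*r^2/2}(s_\Delta\wedge\bar s_\Delta)^{1/N}=\int_{C(Y)}f_t^*\bigl(e^{-r^2/2}(s_\Delta\wedge\bar s_\Delta)^{1/N}\bigr)=\int_{C(Y)}e^{-r^2/2}(s_\Delta\wedge\bar s_\Delta)^{1/N}$.
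So the log term, written on the cone, is constant in $t$. This needs only that the measure is absolutely continuous and finite by klt, and that $f_t$ is a diffeomorphism of the full-measure set $C(Y)^{reg}\setminus\operatorname{Supp}\Delta$; no cutoffs are required.

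Differentiate at $t=0$ and use that $\eta(\delta\xi)$ and $e^{-h_\Delta}$ are homogeneous of degree zero. This gives $\int_{C(Y)}\eta(\delta\xi)e^{-h_\Delta}\,d\mu_{\xi_0}=0$ for every cone metric with Reeb field $\xi_0$. This is exactly where the hyperplane condition $\delta\xi\in T_{\xi_0}H_\Delta$ enters. The derivative of $D_\Delta$ is then the $I$-term alone, a nonzero multiple of $dV_{\xi_0}(\delta\xi)$ by \cite[Lemma~4.1]{DSII}. Criticality of the Ricci-flat metric then gives $dV_{\xi_0}(\delta\xi)=0$.

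Equivalently, and this is what the paper imports from \cite[(4.6)--(4.7)]{DSII}, you can argue directly:
\begin{itemize}
\item From $\mathcal L_{J\delta\xi}(e^{-h_\Delta}\omega_{\xi_0}^n)=0$ you get $\mathcal L_{J\delta\xi}h_\Delta\,\omega_{\xi_0}^n=\mathcal L_{J\delta\xi}\omega_{\xi_0}^n$.
\item Integrate by parts against $e^{-r^2/2}$.
\item Use $\int r^2g\,d\mu_{\xi_0}=2n\int g\,d\mu_{\xi_0}$ for degree-zero $g$.
\end{itemize}
This shows $dV_{\xi_0}(\delta\xi)$ is a fixed nonzero multiple of $\int_{C(Y)}\mathcal L_{J\delta\xi}h_\Delta\,d\mu_{\xi_0}$, which vanishes because $h_\Delta$ is constant at the Ricci-flat metric.
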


\begin{proof}[Sketch]
This is the log analogue of \cite[Equations (4.6)--(4.7)]{DSII}. The calculation
is the same as in \cite[Lemma 12]{CDSIII}: differentiating the Ding functional
along a holomorphic one-parameter subgroup gives the analytic log Futaki
invariant. The divisor contribution is exactly absorbed into the log-volume form
$s_\Delta$, or equivalently into the factor
$\prod_i |s_i|_{h_i}^{2(\beta_i-1)}$ if ordinary canonical frames are used.
Since the conical Ricci-flat metric is a critical point of $D_\Delta$, the log
Futaki invariant vanishes. Combining this with the Donaldson--Sun identity \cite[Equations~(4.6)--(4.7)]{DSII} $dV(\delta\xi)
  = -\frac12 \int_{C(Y)}\mathcal L_{J\delta\xi}h_\Delta\,d\mu_{\xi_0}$
for $\delta\xi\in T_{\xi_0}H_\Delta$, we obtain
$d(V|_{H_\Delta})(\delta\xi)=0$.
\end{proof}

We also record the cone version of the Bando--Mabuchi uniqueness theorem,
which will be used in the next section to prove uniqueness of the tangent cone.

\begin{lemma}[Uniqueness of conic RF metrics]
Given the data $(C(Y),\Delta,\xi_0)$, a Ricci-flat K\"ahler cone metric with cone
singularities along $\Delta$ and Reeb field $\xi_0$ is unique up to the identity
component of
\[
        \operatorname{Aut}(C(Y),\Delta,\xi_0).
\]
\end{lemma}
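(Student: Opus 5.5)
Since both metrics are Ricci-flat cones with the same Reeb field $\xi_0$ and cone singularities along the same $\Delta$, I will reduce their comparison to uniqueness of critical points of the log Ding functional $D_\Delta$ on transverse potentials, and argue in the Bando--Mabuchi/Berndtsson style. Let $\omega_1,\omega_2$ be two such metrics on $(C(Y),\Delta,\xi_0)$. Both are $T_\xi$-invariant cone metrics with Reeb field $\xi_0$, so after fixing the transverse data $(\xi_0,\eta,\omega^T)$ of $\omega_1$ I can write $\omega_2=\frac14 dd^c(r^2e^{\phi})$ with $\phi\in\mathcal H$ a bounded basic potential. The boundedness of $\phi$ comes from the uniform equivalence with the model cone $\omega_\gamma$ near $\Delta^{reg}$ given by Lemma~\ref{lem:local-structure} and its cone analogue. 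By the remark after the definition of $D_\Delta$, both $0$ and $\phi$ are then critical points of $D_\Delta$, since the Euler--Lagrange equation is exactly the conical Ricci-flat equation with measure $\Omega_\Delta$ built from $s_\Delta$.

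Next I join $0$ and $\phi$ by a bounded weak transverse geodesic $\phi_t$, $t\in[0,1]$. Concretely, this is the $S^1$-invariant bounded solution of the homogeneous transverse Monge--Amp\`ere equation on $Y\times$ annulus, obtained by Perron envelope exactly as in the quasi-regular approximation or as in the Sasakian geodesic literature. Along $\phi_t$ the term $I(\phi_t)$ is affine in $t$. For $t\mapsto-\log\int_Y e^{-\phi_t}\Omega_\Delta$, I apply Berndtsson's positivity of direct images to the klt measure $\Omega_\Delta$; integrability of this measure is recorded at the end of the proof of Theorem~\ref{thm:tangent-cone-log-structure}. This gives convexity, and since $t\mapsto D_\Delta(\phi_t)$ is convex with critical points at both ends, it must be affine. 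Berndtsson's equality case then produces a holomorphic vector field $V$ on the regular part of $C(Y)\setminus\operatorname{Supp}\Delta$ whose flow $f_t$ satisfies $f_t^*\omega_1=\omega_{\phi_t}$ up to the transverse normalization.

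The main obstacle is the equality case in this singular, log setting: I must show $V$ extends to a holomorphic vector field on all of $C(Y)$ that is tangent to $\Delta$, commutes with $\xi_0$, and preserves the cone structure. My approach is as follows. $V$ is $T_\xi$-invariant because the geodesic is, so it commutes with $\xi_0$ and the scaling field. Extension across the set $A$ of complex codimension at least two follows from normality, since holomorphic vector fields are sections of the reflexive tangent sheaf. Tangency to $\Delta$ comes from the fact that $f_t$ preserves the measure class $e^{-\phi_t}\Omega_\Delta$, whose density blows up exactly like $|u|^{2\beta-2}$ along each $D_i$. A flow moving $D_i$ would change this singular locus, so $f_t(D_i)=D_i$. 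The delicate analytic point is justifying Berndtsson's argument for merely bounded potentials with conic weights. I would handle this by working on a $T_\xi$-equivariant log resolution as in the Appendix of \cite{DSII}, where the weight becomes an integrable $L^1$ klt density, or by approximating $\Omega_\Delta$ by smooth measures and passing to the limit. Granting this, $f_1\in\operatorname{Aut}_0(C(Y),\Delta,\xi_0)$ and $f_1^*\omega_1=\omega_2$, which proves the lemma.
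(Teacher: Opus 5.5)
Your strategy is the paper's: both metrics become bounded critical points of $D_\Delta$ on $\mathcal H$, a bounded geodesic joins them, Berndtsson convexity forces $D_\Delta$ to be affine along it, and the equality case supplies a holomorphic vector field whose time-one flow lies in $\operatorname{Aut}(C(Y),\Delta,\xi_0)^0$. The step that fails as written is the extension of that vector field. You obtain $V$ only on $C(Y)^{reg}\setminus\operatorname{Supp}\Delta$ and then invoke normality. Reflexivity of the tangent sheaf only lets you cross sets of complex codimension at least two, such as $A$. But $\Delta^{reg}$ is a hypersurface, and a holomorphic vector field off a hypersurface need not extend (for instance $u^{-1}\partial_u$ off $\{u=0\}$). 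Your tangency argument cannot fill this in, because ``a flow moving $D_i$'' already presupposes a flow $f_t$ defined on a neighbourhood of $D_i$.

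What is missing is growth control of $V$ at $\Delta^{reg}$ in a smooth reference metric. The natural input is an $L^2$ bound $\int|V|^2_{\omega_1}\,\omega_1^n<\infty$ at $t=0$; note that $|V|_{\omega_{\phi_t}}=|\bar\partial\dot\phi_t|_{\omega_{\phi_t}}$, which is the quantity entering Berndtsson's argument. Combine this with the fact that a conical metric with angle $2\pi\beta<2\pi$ dominates a smooth one near $\Delta^{reg}$. Indeed $|u|^{2\beta-2}\geq1$ for $|u|\leq1$, so $\omega_1\geq c\,\omega_{Euc}$ and $\omega_1^n\geq c\,dV$. Hence the coefficients of $V$ are in $L^2_{loc}$ for Lebesgue measure, and $L^2$ holomorphic functions extend across hypersurfaces.

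Only after this do normality (across $A$) and your singular-measure argument (giving $f_t(D_i)=D_i$) apply. Completeness of the flow then follows, since a holomorphic field commuting with $\xi_0$ and $r\partial_r$ acts linearly on each finite-dimensional $R_d$. Separately, boundedness of $\phi$ should be deduced from continuity of the two radial functions and compactness of the link. Lemma~\ref{lem:local-structure} does not give it: that lemma concerns $X_\infty$ and gives no control near the higher-codimension singular set.
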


\begin{proof}[Sketch]
A conical Ricci-flat cone metric with fixed Reeb field
corresponds to a bounded critical point of the log Ding functional
$D_\Delta$ on the space $\mathcal H$ of bounded basic transverse K\"ahler
potentials. By the Berndtsson convexity theorem, in the singular/conical form
used in \cite[Appendix 1]{CDSIII}, the Ding functional is convex along bounded
geodesics. Therefore two critical points are joined by a geodesic along which
$D_\Delta$ is affine. The equality case in Berndtsson's theorem produces a
holomorphic vector field preserving the log data; its flow pulls one metric back
to the other. This gives uniqueness modulo
$\operatorname{Aut}(C(Y),\Delta,\xi_0)^0$.

For $\Delta=0$ on a possibly singular affine cone, this is
\cite[Proposition~4.8]{DSII}. The corresponding Ding-functional uniqueness
argument for compact weak conical K\"ahler--Einstein pairs is explained in
\cite[Appendix~1]{CDSIII}, while \cite[Proposition~5]{DatarSzekelyhidi} gives a
related twisted uniqueness statement on singular $\mathbb Q$-Fano spaces. In
the present logarithmic cone setting, one applies the same convexity and
equality-case argument to the log-volume form $s_\Delta$; the klt condition gives
the required $L^p$ integrability on an equivariant log resolution.
\end{proof}

We next show that the Reeb vector has algebraic coefficients. This algebraicity
will provide the discreteness input used in proving that the tangent cones in
$\mathcal C_p$ have the same Hilbert function.

As in Donaldson-Sun \cite[Proposition~2.21]{DSII}, algebraicity of the Reeb
vector supplies the discreteness input for the holomorphic spectrum. In our
logarithmic setting the normalization hyperplane is instead determined by the
log-canonical eigensection $s_\Delta$, so the character records the data of the
pair $(C(Y),\Delta)$ instead.

\begin{theorem}
$\xi_0$ is an algebraic vector.
\end{theorem}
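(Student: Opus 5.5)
We adapt the Donaldson--Sun argument of \cite[Proposition~2.21 and Section~4]{DSII}, replacing the Ricci-flat normalization by the log normalization $\chi_\Delta(\xi)=nN$. The idea is to show that $\xi_0$ is the unique minimizer of a functional on $\mathfrak t_\xi^+\cap H_\Delta$ which is a rational function of $\xi$ with rational coefficients; its critical point is then cut out by polynomial equations over $\mathbb Q$, hence is algebraic.

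\emph{Step 1: $V$ is an index character.} Under the $T_\xi$-equivariant affine embedding $\Psi$, decompose the coordinate ring $R=\bigoplus_{\alpha}R_\alpha$ into weight spaces for the lattice $\Lambda\subset\mathfrak t_\xi^*$ of characters of $T_\xi$. As in \cite{DSII}, via equivariant localization on the ambient $\mathbb C^N$ with the $T^N$-invariant metrics $\omega_\xi$, the volume $V(\xi)$ equals, up to a dimensional constant, the leading coefficient of
\[
F(\xi,t)=\sum_\alpha \dim R_\alpha\, e^{-t\langle\alpha,\xi\rangle}.
\]
By a finite free equivariant resolution of $R$ over the polynomial ring, $F(\xi,t)$ is a ratio of a Laurent polynomial in $e^{-t\langle\alpha,\xi\rangle}$ with integer coefficients and $\prod_k(1-e^{-t\langle w_k,\xi\rangle})$, where the $w_k$ are the integral weights of the embedding. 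Expanding in $t$ shows that $V(\xi)=P(\xi)/\prod_k\langle w_k,\xi\rangle$ with $P$ a polynomial with rational coefficients in coordinates adapted to $\Lambda$. This step uses only the normal affine variety, not $\Delta$, exactly as in the variational lemma above.

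\emph{Step 2: the constraint is rational.} Because $s_\Delta$ is a $T_\xi$-eigensection of the Cartier sheaf $\mathcal O(N(K_{C(Y)}+\Delta))$, its weight $\chi_\Delta$ lies in $\Lambda$. Concretely, $\chi_\Delta$ is the difference between the weight of $s_\Delta$ and the weight of the $N$-th power of a generating $n$-form: on the smooth part, $N\cdot\mathrm{wt}(\Omega)$ is the integral weight computed from the Jacobian, and $\mathrm{wt}$ of $\prod f_i^{-N(1-\beta_i)}$ is integral because $N(1-\beta_i)\in\mathbb Z$. Hence $H_\Delta$ is an affine hyperplane defined over $\mathbb Q$.

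\emph{Step 3: critical-point characterization.} By the log Futaki lemma, $\xi_0$ is a critical point of $V|_{H_\Delta}$. Since $V$ is strictly convex on $\mathfrak t_\xi^+$, it is the unique critical point, hence an isolated solution of the system
\[
\chi_\Delta(\xi)=nN,\qquad d V(\xi)\in \mathbb R\,\chi_\Delta .
\]
After clearing denominators, this is a system of polynomial equations with rational coefficients. An isolated real point of a zero set defined over $\mathbb Q$ has algebraic coordinates: the zero-dimensional component containing it is defined over $\overline{\mathbb Q}$, for instance by elimination.

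\emph{Main obstacle.} The delicate part is Step~1 in the presence of singularities. We must justify that the analytic integral defining $V(\xi)$ equals the algebraic leading coefficient for all $\xi\in\mathfrak t_\xi^+$, and not only for $\xi_0$. We would follow the appendix of \cite{DSII}: compare the two via the asymptotics of $\sum_\alpha\dim R_\alpha e^{-t\langle\alpha,\xi\rangle}$ as $t\to0$, computed against the Gaussian measure for $\omega_\xi$ on $\mathbb C^N$ restricted to $C(Y)$. This uses only normality and finite generation. A secondary check is that the log Futaki identity of the preceding lemma really gives criticality on $H_\Delta$ rather than on the Ricci-flat hyperplane. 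Strict convexity on the affine slice then prevents any positive-dimensional family of critical points.
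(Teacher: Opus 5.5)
Your proposal follows the paper's proof essentially step for step: $V$ is rational with rational coefficients via the index character, $H_\Delta$ is rational because $s_\Delta$ has an integral weight, $\xi_0$ is critical on $H_\Delta$ by log Futaki vanishing and isolated by the positive-definite Hessian, and the isolated-zero lemma of Donaldson--Sun finishes the argument. The differences are cosmetic. You derive rationality of $V$ from an equivariant free resolution rather than citing Martelli--Sparks--Yau and Collins--Sz\'ekelyhidi. For the last step, note that after parametrizing $H_\Delta$ rationally, the positive-definite Hessian already makes $\xi_0$ a nondegenerate, hence complex-isolated, zero of the cleared system, so the elimination argument you sketch applies directly.
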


\begin{proof}
Let $\mathfrak t^+$ be the Reeb cone. Since $s_\Delta$ is a $T$-eigensection, any $\xi\in\mathfrak t$ acts on $s_\Delta$ by a character. Thus define $\chi_\Delta:\mathfrak t\to\mathbb R$ by $\mathcal L_\xi s_\Delta=\sqrt{-1}\chi_\Delta(\xi)s_\Delta$. It is clear that $\chi_\Delta$ is linear. Define $H_\Delta:=\{\xi\in\mathfrak t^+:\chi_\Delta(\xi)=nN\}$; thus $H_\Delta$ is a hyperplane section of $\mathfrak t^+$.

Note that $H_\Delta$ has rational coefficients. Under the $T$-equivariant embedding $C(Y)\hookrightarrow\mathbb C^M$, the torus $T_\mathbb C$ is an algebraic subtorus of $(\mathbb C^*)^M$. Hence its character lattice is obtained from the standard lattice $\mathbb Z^M$. Since $s_\Delta$ spans a one-dimensional $T_\mathbb C$-representation of the linearized line bundle $\mathcal O(N(K_{C(Y)}+\Delta))$, its weight is an integral character of $T_\mathbb C$. In coordinates on $\mathfrak t\subset \mathbb R^M$, this means $\chi_\Delta(\xi)=\langle m,\xi\rangle$ for some $m\in \mathbb Q^M$, in fact integral after the fixed index $N$ is chosen. Therefore $H_\Delta$ is defined by a rational linear equation.

Now by Martelli--Sparks--Yau \cite[Section~5]{MartelliSparksYau}, in the case
where $C(Y)\setminus\{0\}$ has smooth link, and by Collins--Sz\'ekelyhidi
\cite[Section~2]{CollinsSzekelyhidi} for polarized normal affine cones, the
normalized volume $V(\xi)/V_n$ is a rational function with rational coefficients
in the components of $\xi$. The proof uses the index character $F(\xi,t)=\sum_{\alpha} e^{-t\langle\alpha,\xi\rangle}\dim H_\alpha$ whose expansion near $t=0$ has the form $F(\xi,t)=a_0(\xi)(n-1)!t^{-n}+a_1(\xi)(n-2)!t^{1-n}+\cdots$
with $a_0(\xi)$ a rational function with rational coefficients. Moreover
$a_0(\xi)=c_nV(\xi)$ for a universal dimensional constant $c_n$.

Since $H_\Delta$ is defined over $\mathbb Q$, the restricted volume functional $V|_{H_\Delta}$ is also a rational function with rational coefficients. By the variational formula established before, and by the log Futaki vanishing for the conical Ricci-flat metric, $\xi_0$ is a critical point of $V|_{H_\Delta}$. Moreover the Hessian formula gives $\operatorname{Hess}V(\delta\xi,\delta\xi)=n(n+1)\int_{C(Y)}\eta(\delta\xi)^2\,d\mu$, so the Hessian is positive definite on $H_\Delta$. Hence $\xi_0$ is an isolated critical point of $V|_{H_\Delta}$.

After clearing denominators, the critical point equations for $V|_{H_\Delta}$ become a system of polynomial equations with rational coefficients. By \cite[Lemma 4.6]{DSII}, an isolated zero of a system of polynomial equations with rational coefficients has algebraic coordinates. Therefore the components of $\xi_0$ are algebraic numbers.
\end{proof}

The following $L^2-L^\infty$ and gradient estimate will be used repeatedly throughout the rest of this section.

\begin{lemma}[Local estimates for weakly holomorphic functions]
\label{lem:local-holomorphic-estimates}
There is a constant $C$, depending only on the dimension and the non-collapsing
constant, such that the following holds. Let $Z$ be either $X_\infty$ or a
metric tangent cone $C(Y)$ of $X_\infty$. If $f$ is weakly holomorphic on
$B_Z(x,2r)$, meaning $f$ is holomorphic on the regular locus and locally $L^\infty$
across the singular set, then
\[
        \|f\|_{L^\infty(B_Z(x,r))}
        \leq
        C r^{-n}\|f\|_{L^2(B_Z(x,2r))},
        \qquad
        \operatorname{Lip}_{B_Z(x,r)}(f)
        \leq
        C r^{-n-1}\|f\|_{L^2(B_Z(x,2r))}.
\]
\end{lemma}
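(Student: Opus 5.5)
The plan is the standard Moser/mean-value route, with the holomorphic structure giving subharmonicity. The key observation is that for weakly holomorphic $f$ the function $|f|^2$ is plurisubharmonic on the regular part. Since $f$ is locally bounded, $|f|^2$ extends as a bounded plurisubharmonic function across the singular set, because that set is an analytic subvariety of complex codimension at least one (it was identified as an analytic set $V_{\mathcal S}$ in the structure theorems). Hence $|f|^2$ is weakly subharmonic for the Laplacian of $\omega$, in the sense of distributions, on the whole ball.

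For the $L^\infty$ bound I then argue as follows.

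First, I justify integration by parts on $Z$. I use cutoff functions $\chi_\epsilon$ supported away from $\mathcal S$ with $\int|\nabla\chi_\epsilon|^2\to0$. Away from $\operatorname{Supp}\Delta$ the singular set has codimension at least four, and Cheeger--Naber tubular volume bounds give such cutoffs. Near the codimension-two conical part $S_2\setminus S_4$, I use the local model $\mathbb C_\gamma\times\mathbb C^{n-1}$ from Lemma~\ref{lem:local-structure}. In that model, the logarithmic cutoffs $\chi_\epsilon(|u|)$ have small Dirichlet energy (capacity zero in real codimension two). With these cutoffs, $\Delta|f|^2\ge0$ holds weakly on $B_Z(x,2r)$.

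Second, I combine this with the uniform Sobolev (or Neumann--Poincaré) inequality on noncollapsed Ricci-limit spaces with Ricci curvature bounded below. Here the lower bound is $-C r^2$ after rescaling; on the cone it is $0$, and on $X_\infty$ it comes from the approximation Lemma~\ref{lem:polarized-approximation}. These inequalities hold on $X_\infty$ and $C(Y)$ with constants depending only on $n$ and the volume lower bound, by Cheeger--Colding volume convergence. Moser iteration for nonnegative subsolutions then gives
\[
\sup_{B(x,r)}|f|^2\le C\,\mathrm{Vol}(B(x,2r))^{-1}\int_{B(x,2r)}|f|^2 .
\]
Finally, Bishop--Gromov gives $\mathrm{Vol}(B(x,2r))\ge c r^{2n}$, and taking square roots yields $C r^{-n}$.

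For the Lipschitz bound, I first apply the above to get $|f|\le C r^{-n}\|f\|_{L^2}$ on $B(x,3r/2)$. I then use the Bochner formula
\[
\Delta|\nabla f|^2 = |\nabla\nabla f|^2 + \mathrm{Ric}(\nabla f,\overline{\nabla f}) \ge -\lambda_+ |\nabla f|^2
\]
on the regular part, which holds since $f$ is holomorphic and the Ricci curvature is bounded below there. A Caccioppoli estimate gives $\int_{B(3r/2)}|\nabla f|^2\le Cr^{-2}\int_{B(2r)}|f|^2$. Moser iteration applied to $|\nabla f|^2$, using the same cutoffs, yields $\sup_{B(r)}|\nabla f|\le C r^{-n-1}\|f\|_{L^2}$. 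The Lipschitz constant in the metric sense then follows, because the regular set is convex up to measure zero: Cheeger--Colding showed that a.e. pair of points is joined by geodesics through the regular set.

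The main obstacle is justifying the cutoff/integration by parts across the codimension-two conical set, where $|\nabla f|$ need not be bounded a priori. There I would rely on Lemma~\ref{lem:local-structure}: in the coordinates $(u,v)$, $f$ is a bounded holomorphic function, and $\omega$ is uniformly equivalent to $\omega_\gamma$. The boundary terms $\int|\nabla\chi_\epsilon|\,|f|\,|\nabla f|$ are then controlled using the $L^2$ bound on $\nabla f$ from Caccioppoli applied on the complement of the tube, together with the vanishing capacity. On $C(Y)$, the same local model is available by Theorem~\ref{thm:tangent-cone-log-structure}.
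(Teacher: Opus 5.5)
Your $L^\infty$ half is fine. It is essentially the paper's argument in different form. The paper uses the cutoffs to show that $f$ itself is weakly harmonic on the whole ball, which needs only $f\in L^\infty$ and $\nabla f\in L^2_{\mathrm{loc}}$, exactly what your Caccioppoli step gives. It then quotes the mean-value inequality on noncollapsed $\mathrm{RCD}$ spaces, whereas you run Moser iteration on the subsolution $|f|^2$.

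The gap is in the Lipschitz half, at the step ``Moser iteration applied to $|\nabla f|^2$, using the same cutoffs.'' To make $v=|\nabla f|^2$ a weak subsolution across the singular set, you test the regular-part Bochner inequality against $\chi_\epsilon^2\phi$. After absorbing the Hessian, the error you must kill is $\int\phi\,|\nabla f|^2|\nabla\chi_\epsilon|^2$, and for the higher Moser powers $\int\phi\,|\nabla f|^{2p}|\nabla\chi_\epsilon|^2$. Vanishing capacity, $\int|\nabla\chi_\epsilon|^2\to0$, makes this go to zero only if $|\nabla f|$ is already locally bounded near $\mathcal S$, which is the conclusion you are proving. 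Your ``main obstacle'' paragraph only handles $\int|\nabla\chi_\epsilon|\,|f|\,|\nabla f|$, where boundedness of $f$ does the work.

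This is not a technicality: Bochner on the regular part plus capacity-zero cutoffs cannot by itself give Lipschitz bounds. On $\mathbb C_\beta\times\mathbb C^{n-1}$ with $\beta>1$, the regular part is flat and the singular set has zero capacity. The coordinate $u$ is bounded, holomorphic, and weakly harmonic. Yet $|\nabla u|^2=\beta^{-2}|u|^{2-2\beta}$ is unbounded. Near $S_2\setminus S_4$ you are rescued by $\gamma<1$ and the uniform equivalence with $\omega_\gamma$, which bound $|\nabla f|$ a priori. Near the higher stratum $S_4$ you have no information on $\nabla f$ beyond $L^2$.

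The missing ingredient is a Bochner inequality that holds weakly across the singular set, i.e.\ a synthetic lower Ricci bound, not just the curvature bound on the regular part. The paper gets this by using the cutoffs only to prove that $f$ is weakly harmonic. It then uses that $X_\infty$, as a noncollapsed limit of the smooth approximants with $\operatorname{Ric}\ge -C$, is an $\mathrm{RCD}(K,2n)$ space. There the Bakry--\'Emery inequality is part of the structure and yields Cheng--Yau type Lipschitz estimates for harmonic functions. On the tangent cone it cites Proposition~10 of \cite{SLII}, proved via heat-flow regularization and Ding's heat-kernel convergence. Equivalently, you could approximate $f$ by harmonic functions on the smooth approximants and pass the Cheng--Yau estimate to the limit. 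Keeping your $L^\infty$ argument and replacing your Bochner/Moser step by either of these closes the proof.
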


\begin{proof}
On the regular locus, a holomorphic function is harmonic. Here the full metric
singular set may contain the real codimension-two conical stratum; only the
residual higher stratum $S_4$ has real codimension at least four. The cutoff
argument treats $S_2\setminus S_4$ using the local conic charts and uses the
codimension-four smallness only for the remaining higher stratum.
Since $f$ is locally
bounded and weakly holomorphic across the analytic singular set, the cutoff
functions constructed for lower-Ricci K\"ahler limits imply that $f$ is weakly
harmonic on the whole ball. The required cutoffs are supplied either by the good-cutoff construction in the
lower-Ricci K\"ahler setting \cite[Section~3]{SLI}, or by the RCD/Sobolev
theory for singular K\"ahler--Einstein metrics developed in
\cite{SzekelyhidiRCD,GuoPhongSongSturmSobolev}.

For tangent cones, this harmonicity and the resulting estimates are precisely
\cite[Proposition~10]{SLII}: if $\bar\partial f=0$ on
$\mathcal R_\epsilon$ and $f$ has polynomial $L^2$ growth, then $f$ is harmonic
on the whole tangent cone and satisfies the local $L^\infty$ and Lipschitz
bounds. The proof uses heat-flow regularization together with Ding's heat-kernel
convergence theorem.

For $X_\infty$, the polarized approximation, equivalently the singular
K\"ahler--Einstein/RCD theorem of Sz\'ekelyhidi, gives a noncollapsed RCD space.
Weakly harmonic functions on noncollapsed RCD spaces satisfy the standard local
mean-value and Lipschitz estimates. Applying these estimates at unit scale and
rescaling gives the factors $r^{-n}$ and $r^{-n-1}$, since the real dimension is
$2n$.
\end{proof}

Unlike the Donaldson--Sun setting, we have only a lower Ricci bound rather than
a two-sided Ricci bound. We therefore need sharper control of the metric
singular set in order to obtain the required cutoff estimates.

The smallness near the singular set can be obtained either from the good cutoff
construction in \cite[Appendix, Proposition 5.1]{SLI}, or more directly
from the Cheeger--Jiang--Naber bound on the codimension-two Minkowski content
of $X\setminus R_\epsilon$, as used in \cite[Section 3]{SLI}. We use the
appendix route here because it avoids importing the full quantitative
stratification theorem into the proof.

\begin{lemma}[Grafting lemma]
Suppose $(C(Y_i),o)\rightarrow (C(Y),o)$ in $\mathcal C_p$. Let $B$ be a ball in $C(Y)$, and $f$ is holomorphic on $B$. Then shrinking $B$ to $B'$ if necessary, we can find holomorphic functions $f_i$ on $B_i\subset C(Y_i)$ such that $(C(Y_i),B_i,o)\rightarrow (C(Y),B,o)$ and $f_i\rightarrow f$ on $B'$.
\end{lemma}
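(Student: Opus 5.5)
We will follow the Donaldson--Sun $L^2$-grafting scheme of \cite[Proposition~2.13]{DSII}, which does not use the equation on the cones. Instead it uses three ingredients: holomorphic functions on a cone satisfy $L^\infty$/gradient estimates, the Gromov--Hausdorff approximation is smooth on the regular set, and the singular set admits good cutoff functions. The first ingredient is Lemma~\ref{lem:local-holomorphic-estimates}.

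First we transplant $f$. Fix $\epsilon>0$ and the $\epsilon$-regular set $\mathcal R_{\varepsilon_0}$ of $C(Y)$, which equals $\mathcal R$ by the density gap. Since the cones $C(Y_i)$ are Ricci-flat off the boundary and non-collapsed, the convergence $C(Y_i)\to C(Y)$ is smooth (Cheeger--Gromov) on compact subsets $K\Subset B\cap\mathcal R$ by Anderson's theorem. This gives diffeomorphisms $\chi_i:K\to C(Y_i)$ with $\chi_i^*g_i\to g$ and $\chi_i^*J_i\to J$ smoothly. Choose a cutoff $\beta$ supported in $K$, equal to $1$ outside an $\epsilon$-neighbourhood of $S(C(Y))\cap B$, with $\|\nabla\beta\|_{L^2}<\epsilon$.

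Obtaining such a $\beta$ is the main obstacle, because $S(C(Y))$ contains the real-codimension-two conic stratum $\Delta_{C(Y)}$, where a naive cutoff has $\|\nabla\beta\|_{L^2}$ bounded below. We treat the two parts of the singular set separately.
\begin{enumerate}
\item On $S_4$ and on $\Delta^{sing}$, the Hausdorff and Minkowski codimension is at least four, by Lemma~\ref{lem:boundary-support-convergence} together with the Cheeger--Jiang--Naber or \cite[Appendix, Prop.~5.1]{SLI} cutoff. Here standard cutoffs have small $L^2$ gradient.
\item Near $\Delta^{reg}\cap S'$ we follow \cite[Sections~2.1--2.3]{CDSII}. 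We do not cut off at all; instead we use the local conic charts of Lemma~\ref{lem:local-structure}. In these charts $f$ is holomorphic across $\{u=0\}$, and the cone metric is quasi-isometric to $\omega_\gamma$. By Lemma~\ref{lem:boundary-support-convergence}, the boundary of $C(Y_i)$ converges to $\Delta_{C(Y)}$ in the Kuratowski sense. Using this, we enlarge $K$ to include these charts, matching $u$-coordinates, so that $\chi_i$ is defined across the divisor and is $J$-almost-holomorphic in the $L^2$ sense. The cone angles lie in a finite set, which makes the $L^2$ errors uniform.
\end{enumerate}

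Next, set $F_i:=(\beta f)\circ\chi_i^{-1}$, with the weight $e^{-r^2/2}$ or a Hörmander weight making the cone's $\bar\partial$ problem solvable. Then $\|\bar\partial F_i\|_{L^2}\le C\epsilon+o(1)$: the error comes from $\nabla\beta$ and from $\chi_i^*J_i-J$. Solve $\bar\partial u_i=\bar\partial F_i$ on $C(Y_i)$, using Hörmander's $L^2$ estimate. This is valid on the singular cone because, after the cutoff step, the regular part is a complete-in-the-$L^2$-sense Kähler manifold, and $r^2$ is strictly plurisubharmonic with $i\partial\bar\partial r^2=2\omega$. The estimate gives $\|u_i\|_{L^2}\le C\epsilon+o(1)$. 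Set $f_i:=F_i-u_i$. It is holomorphic on the regular part and locally $L^2$, hence weakly holomorphic. Lemma~\ref{lem:local-holomorphic-estimates} then upgrades the $L^2$ closeness to $C^0$ closeness on the smaller ball $B'$, with a uniform Lipschitz bound. Finally, a diagonal argument with $\epsilon\to0$ gives $f_i\to f$ uniformly on $B'$ with respect to the Gromov--Hausdorff approximation.
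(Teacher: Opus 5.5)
Your step (2) is where the argument breaks. You keep the conic stratum $\Delta^{reg}\cap S'$ inside the support of $\beta$ and assert that the transplant maps $\chi_i$ extend across it, ``matching $u$-coordinates'', with $\chi_i^*J_i\to J$ in $L^2$. Nothing you cite produces such maps. Lemma~\ref{lem:boundary-support-convergence} concerns the blow-up of $X_\infty$, not convergence inside $\mathcal C_p$, and in any case it controls only supports. Near a smooth point of $\Delta_{C(Y)}$ with angle $2\pi\gamma$, the boundary of $C(Y_i)$ may consist of several branches, or branches of higher transverse degree, whose coefficients only add up to $1-\gamma$ in the limit. This is exactly the situation of the tubular-trapping lemma and of the multiplicities $m_\ell$ in the proof of Theorem~\ref{thm:alg-met-boundary-agree}, and in that case there is no divisor-matching map at all. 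Even with a single branch, you would need convergence of the conic metrics and complex structures up to the divisor, uniformly in $i$. Lemma~\ref{lem:local-structure} describes one space, Anderson's theorem says nothing at conic points, and finiteness of the angle set does not supply this.

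The detour is also unnecessary, because its premise is false. A real-codimension-two set with locally finite $(2n-2)$-dimensional Minkowski content has zero $W^{1,2}$-capacity. Only a linear cutoff at scale $\epsilon$ has energy bounded below. In a conic chart, let $\rho\approx|u|^\gamma$ be the distance to $\{u=0\}$; the cutoff equal to $\log(\epsilon/\rho)/\log(1/\epsilon)$ on $\{\epsilon^2<\rho<\epsilon\}$ then has energy $O(1/\log(1/\epsilon))$. This is the sense in which \cite[Sections~2.1--2.3]{CDSII} show cones are good; they do cut off. It is also what the paper uses: \cite[Appendix, Proposition~5.1]{SLI} gives cutoffs around the whole singular set $\Sigma_i$, conic stratum included, with $\int|\nabla\chi_i|^2<\kappa_i\to0$. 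So $f$ is transplanted only on the regular set, where the convergence is smooth.

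Your Hörmander step needs this same input. ``The regular part is $L^2$-complete'' is precisely removability of the codimension-two stratum of $C(Y_i)$, which your step (2) was designed to avoid. The paper solves $\bar\partial$ on $R_\epsilon(C(Y_i))$ by \cite[Proposition~5]{SLII} and extends $f_i$ across the singular set by \cite[Proposition~10]{SLII}. Local $L^2$ alone does not give the $L^\infty$ hypothesis of Lemma~\ref{lem:local-holomorphic-estimates}.

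Finally, your bound $\|\bar\partial F_i\|_{L^2}\le C\epsilon+o(1)$ ignores the $O(1)$ term coming from cutting $f$ off near $\partial B$. That term is why $B$ must be shrunk. The paper kills it on $B''$ with the weight $A_iw(r^2)$, $A_i\to\infty$, since the term is supported a definite distance away from $B''$.

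If you replace step (2) by a full capacity cutoff and repair these two points, your Donaldson--Sun scheme goes through, with the cutoff built on $C(Y)$ and transplanted.
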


\begin{proof}
Choose relatively compact balls $B''\Subset B'\Subset D\Subset B$. Let $\Sigma\subset C(Y)$ and $\Sigma_i\subset C(Y_i)$ be the metric singular sets. Fix $\kappa_i\to0$. By \cite[Appendix, Proposition 5.1]{SLI}, applied to $\overline D\subset C(Y)$ and to the corresponding compact sets $\overline D_i\subset C(Y_i)$, we may choose Lipschitz functions $\chi_i$ on $C(Y_i)$ such that $\chi_i=1$ on a neighborhood of $\overline D_i\cap\Sigma_i$, $\operatorname{supp}\chi_i$ is contained in the $\kappa_i$-neighborhood of $\overline D_i\cap\Sigma_i$, and $\int_{D_i}|\nabla\chi_i|^2<\kappa_i$. Set $\beta_i:=1-\chi_i$.

On compact subsets of $D\setminus\Sigma$, the convergence $C(Y_i)\to C(Y)$ is smooth in the sense of the holomorphic charts of \cite[Theorem 1.4 and Proposition 2.4]{SLI}. Thus, after identifying compact subsets of $D\setminus\Sigma$ with their images in $D_i\setminus\Sigma_i$, the function $f$ gives smooth functions $\tilde f_i$ on $D_i\setminus\Sigma_i$ such that $\tilde f_i\to f$ in $C^1_{\mathrm{loc}}(D\setminus\Sigma)$ and $|\bar\partial_i\tilde f_i|\to0$ uniformly on compact subsets of $D\setminus\Sigma$. Here $\bar\partial_i$ denotes the complex structure on $C(Y_i)$.

Choose a radial cutoff $\theta_i$ supported in $D_i$ and equal to $1$ on $B'_i$, with $|\nabla\theta_i|\le C$. Define $\sigma_i:=\theta_i\beta_i\tilde f_i$ on $R_\epsilon(C(Y_i))$. Then $\sigma_i$ is smooth and compactly supported in $R_\epsilon(C(Y_i))$, because $\beta_i$ vanishes on a neighborhood of $\Sigma_i$. Moreover $\sigma_i$ agrees with $\tilde f_i$ on $B'_i$ away from the $\kappa_i$-neighborhood of $\Sigma_i$.

Let $\alpha_i:=\bar\partial_i\sigma_i$. We have $\alpha_i=(\bar\partial_i\theta_i)\beta_i\tilde f_i+\theta_i(\bar\partial_i\beta_i)\tilde f_i+\theta_i\beta_i\bar\partial_i\tilde f_i$. The second term is small because $\int|\nabla\beta_i|^2=\int|\nabla\chi_i|^2<\kappa_i$. The third term is small because $\bar\partial_i\tilde f_i\to0$ on the region away from $\Sigma_i$. The first term is supported in $D_i\setminus B'_i$.

We now solve $\bar\partial_i u_i=\alpha_i$ on $R_\epsilon(C(Y_i))$ using \cite[Proposition 5]{SLII}. For the weight, take $\varphi_i=A_i w(r_i^2)$, where $w'>0$, $w''\le0$, and $w'(t)+tw''(t)>0$, as in \cite[equation (3.1)]{SLII}. The estimate gives
$\int_{R_\epsilon(C(Y_i))}|u_i|^2e^{-\varphi_i}\,d\mu_i\le \int_{R_\epsilon(C(Y_i))}\frac{|\alpha_i|^2}{A_i(w'(r_i^2)+r_i^2w''(r_i^2))}e^{-\varphi_i}\,d\mu_i$.

Choose $A_i\to\infty$ slowly. Since the $\bar\partial\theta_i$ term is supported a definite positive distance away from $B''_i$, the weight makes its contribution to $\int_{B''_i}|u_i|^2$ tend to zero. The $\bar\partial\beta_i$ term is bounded by $C\kappa_i^{1/2}$, and the $\bar\partial\tilde f_i$ term is $o_i(1)$. Thus $\int_{B''_i}|u_i|^2\,d\mu_i\to0$.

Set $f_i:=\sigma_i-u_i$. Then $\bar\partial_i f_i=0$ on $R_\epsilon(C(Y_i))$. By \cite[Proposition 10]{SLII}, $f_i$ is harmonic across the singular set and hence defines a holomorphic function on the normal analytic cone $C(Y_i)$.

It remains to prove convergence. On $B''_i$, $\sigma_i-\tilde f_i$ is supported in the $\kappa_i$-neighborhood of $\Sigma_i$, whose volume tends to zero; since $\tilde f_i$ is uniformly bounded, $\|\sigma_i-\tilde f_i\|_{L^2(B''_i)}\to0$. Also $\|\tilde f_i-f\|_{L^2(B''_i)}\to0$ under the smooth convergence away from $\Sigma$, and $\|u_i\|_{L^2(B''_i)}\to0$. Hence $\|f_i-f\|_{L^2(B''_i)}\to0$.

The estimation lemma gives uniform $L^\infty$ and Lipschitz bounds for $f_i$ on $B'''_i\Subset B''_i$. Therefore every subsequence has a further subsequence converging uniformly on compact subsets to a holomorphic function $g$ on $B'''$. The $L^2$ convergence forces $g=f$. Hence the whole sequence converges to $f$ uniformly on compact subsets of $B'''$. Relabeling $B'''$ as $B'$ proves the lemma.
\end{proof}

This is enough to conclude the rigidity of the spectrum. More precisely, we have:

\begin{theorem}
For any $C(Y) \in \mathcal C_p$, the holomorphic spectrum
$S(C(Y)) = \{ d: \exists f \in R(C(Y)), \operatorname{charge}(f)=d \}$
is independent of $C(Y) \in \mathcal C_p$. Also the Hilbert function
$d \mapsto \dim R_d(C(Y))$ is independent of $C(Y)$ as well.
\end{theorem}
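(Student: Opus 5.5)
We follow the Donaldson--Sun argument \cite[Section~3]{DSII}, with the grafting lemma supplying the approximation step. Two facts about $\mathcal C_p$ are used throughout. First, $\mathcal C_p$ is compact and connected in the pointed Gromov--Hausdorff topology: it is the set of limit points of the continuous path $r\mapsto (X_\infty,r^{-2}\omega_\infty,p)$ in a compact space. Second, on each $C(Y)$ the holomorphic functions of polynomial growth decompose into $\xi_0$-eigenfunctions, $R(C(Y))=\bigoplus_d R_d$. Here $R_d$ consists of functions homogeneous of degree $d$ under the homothety, and the charge $d$ is an eigenvalue of $\mathcal L_{r\partial_r}$.

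\textbf{Step 1 (uniform discreteness).} Show that the union $\bigcup_{C(Y)\in\mathcal C_p}S(C(Y))$ is a discrete subset of $\mathbb R_{>0}$ with no accumulation points, and that $\dim R_d$ is uniformly bounded on bounded charge windows. The dimension bound follows from Lemma~\ref{lem:local-holomorphic-estimates}: homogeneous functions of degree $\le d$ that are $L^2$-orthonormal on the unit ball have uniformly controlled Lipschitz norms. By the non-collapsing and volume bounds, the number of such functions is therefore bounded independently of the cone.

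For discreteness, use the algebraicity theorem just proved. Each $\xi_0$ is algebraic, and charges are values $\langle\alpha,\xi_0\rangle$ on weights $\alpha$ of the torus. Equivalently, the charge of a homogeneous $f$ is determined by the eigenvalue $d(d+2n-2)$ of the Laplacian on the link $Y$. Since the $Y$ form a Gromov--Hausdorff compact family with uniform spectral control, the eigenvalues below any level move continuously. Consequently a single number $d$ can be a charge for nearby cones only when the charges stay on the discrete set in question. If the algebraicity input does not suffice to pin this down, the fallback is the Donaldson--Sun argument \cite[Proposition~2.21]{DSII}, which rules out continuously varying charges using a bound on the degree of the algebraic numbers involved.

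\textbf{Step 2 (semicontinuity via grafting).} Suppose $C(Y_i)\to C(Y)$ in $\mathcal C_p$ and $f\in R_d(C(Y))$. Apply the grafting lemma to obtain holomorphic $f_i$ on $C(Y_i)$ with $f_i\to f$ on a ball. Next project $f_i$ onto the charge spectrum of $C(Y_i)$, using the $L^2$ decomposition under the Reeb flow on the ball. By Step 1, components with charge far from $d$ converge to something $L^2$-orthogonal to $f$. Hence for large $i$ there must be a component of charge in a small window around $d$, and by discreteness its charge is exactly $d$. Doing this simultaneously for an orthonormal basis of $\bigoplus_{d'\le d}R_{d'}$ and using the convergence of the $L^2$ Gram matrices gives
\[
\liminf_i \dim \bigoplus_{d'\le d}R_{d'}(C(Y_i))\ \ge\ \dim\bigoplus_{d'\le d}R_{d'}(C(Y)).
\]

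\textbf{Step 3 (reverse inequality and conclusion).} Run the argument in the other direction. Homogeneous functions on $C(Y_i)$ of charge $\le d$, normalized in $L^2$, have uniform Lipschitz bounds by Lemma~\ref{lem:local-holomorphic-estimates}. They therefore subconverge to homogeneous holomorphic functions on $C(Y)$ of the same charge, and linear independence is preserved because the $L^2$ Gram matrices converge. Together with Step 2, the cumulative counting function $d\mapsto\dim\bigoplus_{d'\le d}R_{d'}$ is locally constant on $\mathcal C_p$ for each $d$ outside the discrete set of Step 1. Since $\mathcal C_p$ is connected, this function is constant, and taking differences at the jumps shows that both $S(C(Y))$ and $d\mapsto\dim R_d$ are independent of $C(Y)$.

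The main obstacle is Step 1, namely uniform discreteness of charges across all of $\mathcal C_p$ and not merely for a single cone. The approach is the Donaldson--Sun mechanism, with the normalization hyperplane $H_\Delta$ replaced throughout by the one cut out by the rational character $\chi_\Delta$ of $s_\Delta$.
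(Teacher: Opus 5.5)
Your Steps 2 and 3 have the same architecture as the paper's proof: compactness of $L^2$-orthonormal homogeneous bases for one inequality, grafting for the other, then local constancy and connectedness of $\mathcal C_p$. However, Step 1 is a genuine gap, and Step 2 depends on it.

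Neither argument you give proves that $\bigcup_{C(Y)\in\mathcal C_p}S(C(Y))$ is uniformly discrete. Continuity of Laplace eigenvalues of the links along the connected family $\mathcal C_p$ is fully compatible with the charges sweeping out intervals, so it cannot give discreteness. Moreover, holomorphic charges are not a priori continuous in the cone at all. Algebraicity of each $\xi_0$, even with a bound on the degree, cannot exclude accumulation either, since algebraic numbers of bounded degree (e.g.\ the rationals) are dense in $\mathbb R$.

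Without isolation of $d$ in the union, Step 2's claim that the charge found in a small window around $d$ on $C(Y_i)$ is exactly $d$ is unsupported. Charges $d+\epsilon_i$ with $\epsilon_i\downarrow 0$ would not be counted in $\bigoplus_{d'\le d}R_{d'}(C(Y_i))$, so your $\liminf$ inequality does not follow. The window projection also leaves the high-charge tail of the grafted $f_i$ uncontrolled.

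The input actually needed is weaker, and it is exactly what algebraicity provides. Each charge is an integral combination of the algebraic coordinates of $\xi_0$, so $\bigcup_{C(Y)}S(C(Y))$ lies in the countable set of real algebraic numbers. Fix a cutoff $D$ outside this countable set and work with $E_D=\bigoplus_{d<D}R_d$, as the paper does.

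The inequality $\limsup_i\dim E_D(C(Y_i))\le\dim E_D(C(Y))$ is your Step 3. For the reverse inequality, suppose it fails. Graft some $f\in E_D(C(Y))$ orthogonal to the limiting subspace $E'$ of $E_D(C(Y_i))$. Write $f_i=g_i+h_i$ with $g_i\in E_D(C(Y_i))$ and $h_i$ having only charges $>D$; orthogonality to $E'$ forces $g_i\to 0$. The three-annulus estimate then gives $h_i$ a growth rate of at least $\lambda^{D}$ between concentric balls. Its limit $f$ grows strictly slower because $D\notin S(C(Y))$, a contradiction. This needs no isolation of individual charges and controls the whole high-charge tail at once.

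Since $D$ avoids every spectrum in $\mathcal C_p$, the function $C\mapsto\dim E_D(C)$ is locally constant on all of $\mathcal C_p$, hence constant. Density of admissible $D$ then recovers both the spectrum and the Hilbert function. The paper's own proof uses this countability implicitly, when it passes from constancy of $\dim E_D$ near one cone to constancy on all of $\mathcal C_p$.
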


\begin{proof}
It is standard that for any metric space, the space of possible tangent cones at a
point is compact and connected; in our setting this is \cite[Lemma 3.2]{DSII}.
Fix $C(Y)\in\mathcal C_p$, and choose $D\notin S(C(Y))$. Let
$E_D(C(Y))=\oplus_{0<d<D}R_d(C(Y))$.

Let $C(Y_i)\rightarrow C(Y)$. We show that $\dim E_D(C(Y_i))$ is eventually
constant. First let $\{f^i_1,\dots,f^i_{m_i}\}$ be an orthonormal basis of
$E_D(C(Y_i))$ on the unit ball. Since the functions are homogeneous with charges
in $(0,D)$, their $L^2$ norms on fixed larger balls are uniformly controlled by
their $L^2$ norms on the unit ball. By the estimation lemma, after passing to a
subsequence the $f^i_a$ converge locally uniformly and weakly in $L^2$ to
holomorphic functions $f_a$ on $C(Y)$. The limit functions are still homogeneous
of charge at most $D$, and orthonormality is preserved by strong convergence of
$L^2$ norms. Hence $\limsup_i \dim E_D(C(Y_i))\leq \dim E_D(C(Y))$.

For the opposite inequality, suppose $\dim E_D(C(Y_i))<\dim E_D(C(Y))$ along a
subsequence. Passing to a further subsequence, an orthonormal basis of
$E_D(C(Y_i))$ converges to an orthonormal basis of a proper subspace
$E'\subset E_D(C(Y))$. Choose $f\in E_D(C(Y))$ with $\|f\|_{L^2(B)}=1$ and
$f\perp E'$. By the grafting lemma, after shrinking the ball, there are holomorphic
functions $f_i$ on $C(Y_i)$ with $f_i\rightarrow f$ locally uniformly.

Write $f_i=g_i+h_i$, where $g_i\in E_D(C(Y_i))$ and $h_i\perp E_D(C(Y_i))$ in
$L^2$ on the unit ball. Since $f\perp E'$, we get $\|g_i\|_{L^2}\rightarrow0$.
On the other hand, $h_i$ has no homogeneous component of charge $<D$. Therefore
the three-annulus estimate gives a uniform lower growth bound for $h_i$ from the
unit ball to a slightly larger ball. But $f_i\rightarrow f\in E_D(C(Y))$, whose
growth is strictly less than this bound because $D\notin S(C(Y))$. This is a
contradiction. Hence $\dim E_D(C(Y_i))=\dim E_D(C(Y))$ for all large $i$.

Thus $\dim E_D(C(Y))$ is locally constant on $\mathcal C_p$. Since
$\mathcal C_p$ is connected, it is constant. Varying $D$ over the complement of
the discrete spectrum gives both the independence of the spectrum and the
independence of the Hilbert function.
\end{proof}

We now return to $X_\infty$. Fix $p \in X_\infty$ and $\lambda=2$. Our goal
is to construct a valuation $\nu$ on $\mathcal O_{X_\infty,p}$. We will only be
able to show it is a semivaluation for now; we will conclude it is a genuine
valuation in the next section after we upgrade the weak limit to the strong limit.

Let $Z_i$ be the rescaling of $(X_\infty,p)$ by the factor $\lambda^i$, and let
$B_i$ be the unit ball centered at $p$ in $Z_i$. Thus $B_i$ corresponds to the
ball $B(p,\lambda^{-i})$ in the original metric. For $0\neq f\in
\mathcal O_{X_\infty,p}$, represented on a neighborhood of $p$, set
$\|f\|_i=(\int_{B_i}|f|^2\,d\mu_i)^{1/2}$ and $[f]_i=f/\|f\|_i$.

\begin{theorem}[Metric order and semivaluation]
The limit
$$\nu(f):=-(\log\lambda)^{-1}\lim_{i\to\infty}\log(\|f\|_{i+1}/\|f\|_i)$$
exists and lies in $S\cup\{+\infty\}$, where $S$ is the common holomorphic
spectrum of the tangent cones at $p$. Moreover:
\begin{enumerate}
    \item if $\nu(f)=+\infty$, then $[f]_i$ converges weakly by sequence to $0$
    on every tangent-cone subsequence;
    \item if $\nu(f)=d<+\infty$, then $[f]_i$ converges strongly by sequence to
    a nonzero homogeneous holomorphic function of charge $d$ on the resulting
    tangent cone;
    \item $\nu$ is a semivaluation:
    $\nu(f+g)\geq \min\{\nu(f),\nu(g)\}$ and
    $\nu(fg)=\nu(f)+\nu(g)$.
\end{enumerate}
Consequently $I_\infty:=\{f\in\mathcal O_{X_\infty,p}:\nu(f)=+\infty\}$ is an
ideal, and $\nu$ descends to a genuine valuation on
$\mathcal O_{X_\infty,p}/I_\infty$.
\end{theorem}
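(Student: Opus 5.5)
We follow the Donaldson--Sun three-annulus argument \cite[Section~3]{DSII}, with the spectral rigidity of the preceding theorem as the key input. Fix $0\neq f\in\mathcal O_{X_\infty,p}$ and write $\lambda_i(f):=\|f\|_{i+1}/\|f\|_i$, so the claim is that $-\log_\lambda\lambda_i(f)$ converges. First, by Lemma~\ref{lem:local-holomorphic-estimates} together with the Bishop--Gromov volume bound, the normalized functions $[f]_i$ are uniformly bounded and uniformly Lipschitz on the balls $B_{Z_i}(p,R)$ for each fixed $R$, with constants that may depend on $R$ and on a bound for the ratios $\lambda_{i+k}(f)$. Hence, along any subsequence for which $Z_i$ converges to a tangent cone $C(Y)\in\mathcal C_p$, the functions $[f]_i$ subconverge locally uniformly to a holomorphic function $f_\infty$ on $C(Y)$. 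Harmonicity across the singular set is supplied by \cite[Proposition~10]{SLII}, exactly as in the grafting lemma.

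The main step is a three-annulus lemma that is uniform over $\mathcal C_p$. Fix $d\notin S$. Using the homogeneous decomposition on cones, any holomorphic $g$ on a cone in $\mathcal C_p$ satisfies the following dichotomy: if $\|g\|_{B_1}\ge\lambda^{-d}\|g\|_{B_{\lambda}}$ in the appropriate normalization, then the next annulus strictly improves. Concretely,
\[
\|g\|_{B_{\lambda^{-1}}}\le\lambda^{-d}\|g\|_{B_1}\ \Longrightarrow\ \|g\|_{B_{\lambda^{-2}}}\le\lambda^{-d-\epsilon}\|g\|_{B_{\lambda^{-1}}},
\]
and the reverse implication holds in the other direction. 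The constant $\epsilon=\epsilon(d)>0$ is uniform because the spectrum $S$ is common to all cones and discrete, since the Reeb vector is algebraic. We then transfer this dichotomy to $Z_i$ for large $i$ by a contradiction and compactness argument. Suppose it fails along a sequence. Normalize, pass to a limit using the estimates above and compactness of $\mathcal C_p$, and obtain a holomorphic function on a cone that violates the cone dichotomy. I expect this to be the main obstacle: one has to know that the limit of $[f]_i$ is nonzero and achieves the limiting ratios, so the $L^2$ convergence must be strong. That strength comes from the uniform $L^\infty$ and Lipschitz bounds combined with volume convergence, which we obtain through the polarized approximation.

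Given the dichotomy, the standard iteration applies. For each $d\notin S$, the sequence $\lambda_i(f)$ is eventually either always $\le\lambda^{-d}$ or always $\ge\lambda^{-d}$. Taking $d$ to range over the gaps of $S$ shows that $\limsup$ and $\liminf$ of $-\log_\lambda\lambda_i(f)$ lie in the same gap closure. We also check that any finite limit point occurs as the charge of a nonzero homogeneous limit, by decomposing $f_\infty$ and observing that a mixed-charge limit would contradict stability of the ratios. This shows that the limit exists and lies in $S\cup\{+\infty\}$. In case (2), with $\nu(f)=d$, the ratios tend to $\lambda^{-d}$, so the limit is a nonzero function with pure growth $d$, hence homogeneous of charge $d$, and the convergence is strong by the uniform estimates. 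In case (1), the ratios tend to $0$, and the uniform bounds force every subsequential limit to vanish on $B_1$, hence weakly to $0$.

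For the semivaluation property (3), the ultrametric inequality follows from $\|f+g\|_i\le\|f\|_i+\|g\|_i$ together with $\|f\|_i\approx\lambda^{-i\nu(f)+o(i)}$. Multiplicativity comes from (2): if both orders are finite, then $[f]_i[g]_i$ converges to $f_\infty g_\infty$, which is nonzero on the normal, hence irreducible, cone. Therefore $\|fg\|_i/(\|f\|_i\|g\|_i)$ stays bounded above and below, giving $\nu(fg)=\nu(f)+\nu(g)$. If one order is infinite, the $L^\infty$ bound on the other factor gives $\nu(fg)=\infty$. These properties make $I_\infty$ an ideal: it is closed under sums by (3), and under multiplication by arbitrary elements because $\nu\ge0$. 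Finally, $\nu$ is finite on the nonzero classes of the quotient, so it descends to a valuation on $\mathcal O_{X_\infty,p}/I_\infty$.
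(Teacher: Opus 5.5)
Your route is the paper's: the Donaldson--Sun three-annulus inequality on the cones in $\mathcal C_p$ is transferred to the rescalings $Z_i$ by a normalize-and-pass-to-a-tangent-cone contradiction argument. That gives a crossing dichotomy for each $d\notin S$, then identification of the limits as homogeneous, and finally irreducibility of the cone for multiplicativity. Using the growth rates $\|f\|_i=\lambda^{-i\nu(f)+o(i)}$ instead of the paper's sup-norm characterization works fine.

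However, the displayed cone dichotomy, which you call the main step, points the wrong way and is false as written. In the normalization used for $\|\cdot\|_i$, a function $g=\sum_{d'}g_{d'}$ on a cone satisfies $\|g\|_{B_r}^2=\sum_{d'}r^{2d'}\|g_{d'}\|_{B_1}^2$. Hence $\|g\|_{B_{\lambda^{-1}}}^2\le\|g\|_{B_1}\|g\|_{B_{\lambda^{-2}}}$, with equality only for homogeneous $g$, so the effective growth rate can only \emph{decrease} toward the vertex. For a counterexample, take $d_1<d<d_2$ in $S$ and $g=aF_{d_1}+bF_{d_2}$ with $a,b\neq0$ tuned so that $\|g\|_{B_{\lambda^{-1}}}=\lambda^{-d}\|g\|_{B_1}$. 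Strict Cauchy--Schwarz then gives $\|g\|_{B_{\lambda^{-2}}}>\lambda^{-d}\|g\|_{B_{\lambda^{-1}}}$, contradicting your conclusion $\|g\|_{B_{\lambda^{-2}}}\le\lambda^{-d-\epsilon}\|g\|_{B_{\lambda^{-1}}}$. Of the two implications you assert, only the opposite one holds: $\|g\|_{B_{\lambda^{-1}}}\ge\lambda^{-d}\|g\|_{B_1}$ forces $\|g\|_{B_{\lambda^{-2}}}>\lambda^{-d}\|g\|_{B_{\lambda^{-1}}}$, even with a margin depending only on $d$ and $S$.

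The fix is to drop the false half. The true half is exactly the paper's crossing lemma, and it already gives your eventual dichotomy: once $\lambda_i(f)\ge\lambda^{-d}$ at some large $i$, it stays $>\lambda^{-d}$ at all later $i$.

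It is also the only direction in which your transfer argument works. Normalize the middle norm, $\|h_\alpha\|_{\alpha+1}=1$. The hypothesis $\|h_\alpha\|_{\alpha+1}\ge\lambda^{-d}\|h_\alpha\|_\alpha$ then gives an \emph{upper} bound $\|h_\alpha\|_\alpha\le\lambda^{d}$ on the outermost ball. That bound supplies the interior estimates and survives weak limits by lower semicontinuity. The two inner norms converge strongly, so the failure $\|h_\alpha\|_{\alpha+2}\le\lambda^{-d}$ passes to the limit. The limit $H$ then satisfies $1=\|H\|_{B_{\lambda^{-1}}}^2\le\|H\|_{B_1}\|H\|_{B_{\lambda^{-2}}}\le 1$, so $H$ is homogeneous of charge $d\notin S$, a contradiction. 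In your reversed version the hypothesis only bounds the outer norm from below, which gives neither compactness nor a passage to the limit.

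A minor point: discreteness of $S$ comes from finite generation of $R(C(Y))$ by homogeneous elements of positive charge, not from algebraicity of the Reeb vector.
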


\begin{proof}
We use the three-annulus inequality in the form of \cite[Lemma 3.6]{DSII} adapted to our setting. Let
$B=\{r<1\}$ in a tangent cone $C(Y)$, and let $\Lambda$ denote dilation by
$\lambda$. If $F$ is $L^2$ and holomorphic on $B$, write
$F=\sum_{d\in S}F_d$ into homogeneous charge pieces. Then
$\Lambda.F=\sum_{d\in S}\lambda^dF_d$ and
$\Lambda^2.F=\sum_{d\in S}\lambda^{2d}F_d$. Since distinct charge spaces are
orthogonal in $L^2(B)$, Cauchy--Schwarz gives
$\|\Lambda.F\|_{L^2(B)}^2\leq \|F\|_{L^2(B)}\|\Lambda^2.F\|_{L^2(B)}$.
Note, in particular, that equality holds if and only if all nonzero $F_d$ have the same charge, i.e. if and
only if $F$ is homogeneous.

We next use the corresponding crossing lemma. Fix $\bar d\notin S$. Then there
is $i_0=i_0(\bar d)$ such that for all $j>i\geq i_0$ and every nonzero holomorphic
function $h$ defined on $B_i$, if $\|h\|_{i+1}\geq \lambda^{-\bar d}\|h\|_i$,
then $\|h\|_{j+1}>\lambda^{-\bar d}\|h\|_j$. Indeed, if this failed, after
normalizing by $\|h_\alpha\|_{\alpha+1}=1$ and passing to a tangent-cone limit,
the estimation lemma gives locally uniform convergence to a nonzero holomorphic
function $H$ satisfying
$\|H\|_{L^2(B)}\leq \lambda^{\bar d}$,
$\|\Lambda.H\|_{L^2(B)}=1$, and
$\|\Lambda^2.H\|_{L^2(B)}\geq \lambda^{-\bar d}$. These inequalities force
equality in the three-annulus inequality, so $H$ is homogeneous of charge
$\bar d$, contradicting $\bar d\notin S$.

Apply the crossing lemma to
$a_i(f):=-(\log\lambda)^{-1}\log(\|f\|_{i+1}/\|f\|_i)$. For every
$\bar d\notin S$, the sequence $a_i(f)$ eventually stays on one side of
$\bar d$. Since $S$ is discrete, this implies that $a_i(f)$ has a limit in
$S\cup\{+\infty\}$. This proves the existence of $\nu(f)$.

The convergence statements are the same as \cite[Corollary 3.8]{DSII}. If
$\nu(f)=+\infty$, any nonzero weak limit of $[f]_i$ would have finite homogeneous
charge by the three-annulus argument, contradicting $a_i(f)\to+\infty$. If
$\nu(f)=d<+\infty$, then on every tangent-cone subsequence, any weak limit
$F$ of $[f]_i$ satisfies
$\lambda^{-2d}\|\Lambda^2.F\|_{L^2(B)}
=\lambda^{-d}\|\Lambda.F\|_{L^2(B)}
=\|F\|_{L^2(B)}$.
Thus equality holds in the three-annulus inequality, and $F$ is homogeneous of
charge $d$. The estimation lemma gives uniform Lipschitz bounds on smaller balls,
and the convergence of the $L^2$ norms gives strong convergence.

It remains to prove the semivaluation properties. By the local $L^2$--sup
estimate, the above $L^2$ definition agrees with the sup-norm characterization
$\nu(h)=\lim_{r\to0}(\log r)^{-1}\log\sup_{B(p,r)}|h|$. Therefore
$\sup_{B(p,r)}|f+g|\leq \sup_{B(p,r)}|f|+\sup_{B(p,r)}|g|$ gives
$\nu(f+g)\geq \min\{\nu(f),\nu(g)\}$.

Similarly, $\sup_{B(p,r)}|fg|\leq \sup_{B(p,r)}|f|\sup_{B(p,r)}|g|$ gives
$\nu(fg)\geq \nu(f)+\nu(g)$. For the reverse inequality, first assume
$\nu(f)=d_f<+\infty$ and $\nu(g)=d_g<+\infty$. Choose a tangent-cone subsequence
along which $[f]_i$ and $[g]_i$ converge strongly to nonzero homogeneous
holomorphic functions $F$ and $G$ of charges $d_f$ and $d_g$. The tangent cone is
normal and irreducible, hence its holomorphic function ring is a domain, so
$FG\neq0$. Choose a regular point $x$ with $F(x)G(x)\neq0$, and choose
$x_i\in B_i$ converging to $x$. Then for all large $i$,
$|[f]_i(x_i)[g]_i(x_i)|\geq c>0$. Hence
$\sup_{B_i}|fg|\geq c\|f\|_i\|g\|_i$. Using the sup-norm characterization gives
$\nu(fg)\leq d_f+d_g$. Therefore $\nu(fg)=\nu(f)+\nu(g)$ in the finite case.
If one of $\nu(f),\nu(g)$ is $+\infty$, the inequality
$\nu(fg)\geq\nu(f)+\nu(g)$ already gives $\nu(fg)=+\infty$.

Hence $\nu$ is a semivaluation.
\end{proof}

\section{Filtration and degenerations}

In this section we construct the analogue of the Donaldson--Sun intermediate degeneration
\cite[Section~3]{DSII}
\[
W=\operatorname{Spec}\operatorname{gr}_\nu\mathcal O_{X_\infty,p},
\]
and track the log boundary through this first degeneration. We then introduce
an incidence space $\mathcal P$ in which $(W,\Delta_W)$ and
$(C(Y),\Delta_C)$ lie in a common equivariant parameter space.

Let $S \subset \mathbb R_{\geq 0}$ be the holomorphic spectrum of a tangent cone
at $p$. By the holomorphic rigidity theorem as proven before, both $S$ and the
Hilbert function $h:S\rightarrow\mathbb Z$ given by
$h(d)=\mu_d:=\dim R_d(C(Y))$ are independent of the tangent cone
$C(Y)\in\mathcal C_p$.

Let $T$ be the compact torus in $\operatorname{Aut}(C(Y))$ which is obtained as
the closure of the one-parameter flow generated by the Reeb flow. We can then
decompose $R(C(Y))$ as the $T$-weight spaces:
$R(C(Y))=\bigoplus_{\chi\in X^*(T)}R_\chi(C(Y))$. The Reeb vector field $\xi$
then assigns to each $T$-weight $\chi$ the charge $\langle \chi,\xi\rangle$,
thus $R_d(C(Y))=\bigoplus_{\langle\chi,\xi\rangle=d}R_\chi(C(Y))$.

By the affine algebraicity theorem, $C(Y)$ is a normal affine variety; hence its
coordinate ring $R(C(Y))$ is finitely generated. In the Donaldson--Sun setting
this is contained in \cite[Theorem~1.2]{DSII}, and under the Ricci lower bound
hypotheses used here it follows from \cite[Theorem~1.1]{SLII}. Thus pick $D$ such that
$R(C(Y))$ is generated by $E_D(C(Y)):=\bigoplus_{d\leq D}R_d(C(Y))$. Choosing
a $T$-homogeneous basis of $E_D(C(Y))$ gives us a $T$-equivariant affine embedding
$\Phi:C(Y)\rightarrow\mathbb C^N$. Under this embedding, the pushforward
$\Phi_*(\xi_{C(Y)})$ is a diagonal vector field on $\mathbb C^N$, with weights
coming from the Reeb charges.

The discreteness of $S$ also follows at this point. Indeed, choose homogeneous
generators $x_1,\dots,x_N$ of $R(C(Y))$ with positive charges $w_1,\dots,w_N$.
Then every charge occurring in $R(C(Y))$ lies in the additive semigroup
$\sum_j\mathbb Z_{\geq0}w_j$. Since each $w_j>0$, this semigroup has only
finitely many elements in every bounded interval $[0,M]$. Hence
$S=\{0=d_0<d_1<d_2<\cdots\}$ is discrete.

For $k\geq0$ define $I_k:=\{f\in A:\nu(f)\geq d_k\}$ and $I_{>d_k}:=\{f\in A:\nu(f)>d_k\}$.
Since there is no spectral value strictly between $d_k$ and $d_{k+1}$, we have
$I_{>d_k}=I_{k+1}$. Equivalently $I_k/I_{k+1}$ is the graded piece of charge
$d_k$. The semivaluation property gives $I_jI_k\subset I_\ell$ whenever
$d_\ell\leq d_j+d_k$.

\begin{definition}
Let $R_p:=\operatorname{gr}_\nu A:=\bigoplus_{k\geq0}I_k/I_{k+1}$, and let
$W:=\operatorname{Spec}R_p$.
\end{definition}

Observe that $W$ is an affine scheme graded by the semigroup $S$. After finite
generation is proved below, $W$ is an affine variety.

We recall the adapted-basis input from \cite[Section~3.2]{DSII}. Let
$A=\mathcal O_{X_\infty,p}$, and let $P\subset A$ be a finite-dimensional
subspace. 

Write $B_i$ for the unit ball in the rescaling of $(X_\infty,p)$ by
$\lambda^i$, so $B_i$ is the original ball $B(p,\lambda^{-i})$. For
$f\in P$, let $\|f\|_i:=\|f\|_{L^2(B_i)}$. The comparison map
$\Lambda_i:P|_{B_{i-1}}\to P|_{B_i}$ is induced by restricting the same germ to
the next rescaled ball. On tangent-cone limits, $\Lambda_i$ becomes the genuine
dilation action.

\begin{definition}[Adapted sequence of bases]
Let $P\subset A$ be finite-dimensional, with $\dim P=m$. An \emph{adapted
sequence of bases} is a sequence of bases
$\{G_i^1,\dots,G_i^m\}$ of $P$ such that:

\begin{enumerate}
\item $\|G_i^a\|_i=1$ for all $a$, and
$\int_{B_i}G_i^a\overline{G_i^b}\to0$ for $a\neq b$.

\item For each $a$,
$\Lambda_i(G_{i-1}^a)=\mu_{ia}G_i^a+p_i^a$, where
$p_i^a\in\operatorname{Span}\{G_i^1,\dots,G_i^{a-1}\}$ and
$\|p_i^a\|_i\to0$.

\item There are charges $d_1\leq\cdots\leq d_m$ in the holomorphic spectrum
such that, with our convention $B_i=B(p,\lambda^{-i})$,
$\mu_{ia}\to\lambda^{-d_a}$. Moreover the triangular error can be chosen inside
the equal-charge block:
$p_i^a\in\operatorname{Span}\{G_i^b:b<a,\ d_b=d_a\}$.
\end{enumerate}
\end{definition}

We now have the following results on the adapted complements and finite generation
of the associated graded ring from \cite[Section~3.2]{DSII}. We summarize and adapt the proof here for reference. We will need the weight decomposition in the adapted components part in the
subsequent sections where we keep track of how the cycles converge in the limit.

\begin{lemma}[Adapted complements]
For every $k\geq0$ there is a decomposition $I_k=I_{k+1}\oplus J_k$ such that
$\dim J_k=\mu_k:=\dim R_{d_k}(C(Y))$, and $J_k$ admits an adapted sequence of
bases whose tangent-cone limits form a basis of $R_{d_k}(C(Y))$.
\end{lemma}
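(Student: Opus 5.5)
The plan follows \cite[Section~3.2]{DSII}, using three results already established: the metric order theorem (existence of $\nu$, and strong convergence of $[f]_i$ to a homogeneous function of charge $\nu(f)$), rigidity of the Hilbert function on $\mathcal C_p$, and the grafting lemma. Fix $k$. The inequality $\dim I_k/I_{k+1}\le\mu_k$ is proved first, and then a complement of the right dimension is constructed.

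\textbf{Upper bound.} Let $f_1,\dots,f_m\in I_k$ be linearly independent modulo $I_{k+1}$. Every nonzero element of their span $P$ has $\nu=d_k$, since $\nu\ge d_k$ on $I_k$ and $I_{>d_k}=I_{k+1}$. At each scale $i$, apply Gram--Schmidt in $L^2(B_i)$ to $P$, and pass to a tangent-cone subsequence. By the metric order theorem and the $L^2$--$L^\infty$ estimate of Lemma~\ref{lem:local-holomorphic-estimates}, the orthonormal frames converge to orthonormal homogeneous functions of charge $d_k$ in $R_{d_k}(C(Y))$.

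The step that needs care is showing the limits are independent. If they were not, some unit-norm combination $h_i=\sum c^a_i f_a$ would converge to $0$. One then runs the crossing lemma for $\bar d$ slightly above $d_k$ on $h_i$, viewed as elements of $P$. This is a finite-dimensional family, so one can use compactness of the unit sphere of coefficients together with the uniform (in $P$) version of the three-annulus crossing argument. The conclusion is that every element of $P$ decays at a rate bounded uniformly near $d_k$, which forces the limit to be nonzero. Hence $m\le\mu_k$.

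\textbf{Lower bound.} To realize all of $R_{d_k}(C(Y))$, take a homogeneous basis $F_1,\dots,F_{\mu_k}$ on a tangent cone $C(Y)$ reached along scales $\lambda^{i}$. Transplant the $F_a$ to holomorphic functions on $B_i\subset X_\infty$. The tool is the same $\bar\partial$/Hörmander-with-cutoff argument as in the grafting lemma, but applied between the rescaled $X_\infty$ and $C(Y)$ rather than between two cones; the capacity cutoffs from \cite[Appendix, Proposition 5.1]{SLI} are available there via the local structure near $S_2\setminus S_4$. This gives germs $g_a$ with $[g_a]_i\to F_a$. The $g_a$ may have $\nu(g_a)\ne d_k$ in principle. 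So decompose each $g_a$ using the already-constructed finite-dimensional pieces $J_0,\dots,J_{k-1}$ (proceeding by induction on $k$) and remove the components of charge $<d_k$. Their tangent-cone limits are orthogonal to $F_a$ and have smaller charge, so this subtraction changes the limit only by $o(1)$; afterwards $\nu\ge d_k$. Strong convergence to $F_a$ then forces $\nu=d_k$ exactly. Independence of the limits gives $\dim I_k/I_{k+1}\ge\mu_k$. Because of the rigidity of the Hilbert function, this bound does not depend on which tangent cone was used.

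\textbf{Adapted bases.} Choose $J_k$ to be the span of these lifts, which complements $I_{k+1}$ by the dimension count. For the adapted sequence, run Gram--Schmidt on $J_k$ in $L^2(B_i)$ successively in $i$. The restriction map $\Lambda_i$ on tangent-cone limits is dilation, which acts on $R_{d_k}$ by the scalar $\lambda^{-d_k}$. Hence the transition matrices are, up to $o(1)$, upper triangular with diagonal entries tending to $\lambda^{-d_k}$, and since all charges in $J_k$ are equal the triangular error stays in the equal-charge block. This is \cite[Proposition~3.13]{DSII} verbatim.

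\textbf{Main obstacle.} I expect the main obstacle to be the transplantation step, because it needs holomorphic functions on the non-cone space $X_\infty$ converging to a prescribed cone function with controlled $L^2$ error uniformly across the codimension-two conical locus.
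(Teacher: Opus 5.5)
Your overall route is the paper's, which is in turn \cite[Proposition~3.14]{DSII}: induction on $k$, an upper bound $\dim\le\mu_k$ from limits of orthonormalized bases, grafting of a basis of $R_{d_k}(C(Y))$ for the lower bound, and Gram--Schmidt for the adapted basis. In the upper bound, your compactness-of-the-unit-sphere version of the crossing lemma is the right tool for the $i$-dependent frame vectors, since it gives strong convergence and charge-$d_k$ homogeneity of their limits. The paper instead takes a maximal $J\in\mathcal J_k$ and then needs a second relative-order argument to get $I_k=I_{k+1}\oplus J_k$. Your direct two-sided count of $\dim I_k/I_{k+1}$ gives that splitting immediately, which is slightly cleaner.

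The lower-bound step, however, does not go through as written. Grafting gives, for each scale $i$, a function $g_a^{(i)}$ on $B_i$ whose normalization is close to $F_a$ \emph{at that scale only}. It does not give a fixed germ $g_a$ with $[g_a]_i\to F_a$. If it did, the metric-order theorem would already force $\nu(g_a)=d_k$ and nothing would need subtracting; producing such germs is essentially what the lemma asserts. So ``strong convergence forces $\nu=d_k$'' is not available.

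The claim that removing the $J_0\oplus\cdots\oplus J_{k-1}$ part changes things only by $o(1)$ is also not justified by orthogonality of its limits to $F_a$. In the algebraic splitting $g_a^{(i)}=p^{(i)}+r^{(i)}$ with $r^{(i)}\in I_k$, nothing you say prevents $p^{(i)}$ from being large in $L^2(B_i)$ and cancelled there by $r^{(i)}$. An $L^2(B_i)$-orthogonal projection would be small, but it need not land in $I_k$.

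The missing input is the crossing lemma applied uniformly to these scale-dependent functions. Take $\bar d'\in(d_{k-1},d_k)\setminus S$. Every nonzero $r\in I_k$ satisfies $\|r\|_{j+1}<\lambda^{-\bar d'}\|r\|_j$ for all $j\ge i_0(\bar d')$, since otherwise the crossing lemma would give $\nu(r)\le\bar d'<d_k$. Hence normalized limits of $r^{(i)}$ carry no charge below $d_k$. They therefore cannot cancel the limits of $p^{(i)}$, which converge strongly and have charge $<d_k$ by the induction hypothesis. This forces $\|p^{(i)}\|_i=o(\|g_a^{(i)}\|_i)$.

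Then fix one large $i$ and take $\bar d\in(d_k,d_{k+1})\setminus S$. Apply the crossing lemma at that single scale to each $r_a^{(i)}$ and to every unit combination $\sum_a c_a r_a^{(i)}$. This gives $\nu\le d_k$, hence $\nu(r_a^{(i)})=d_k$ and linear independence modulo $I_{k+1}$. This is the ``relative order'' step the paper imports from \cite[Proposition~3.12]{DSII}. With it inserted, and granting the same grafting input on the rescaled $X_\infty$ that the paper uses, your argument goes through.
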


\begin{proof}
This is the analogue of \cite[Proposition 3.14]{DSII}. The proof uses only the
analytic inputs established above: the metric order, the crossing lemma, the
grafting lemma, and the local $L^2$ estimates.

We proceed by induction on $k$. For $k=0$, take $J_0=\mathbb C$. Suppose the
result is known for all $j<k$. Let $\mathcal J_k$ be the set of finite-dimensional
subspaces $J\subset I_k$ such that $J\cap I_{k+1}=0$ and $J$ admits an adapted
sequence of bases with degree set $\{d_k,\dots,d_k\}$. By the convergence of
adapted bases to homogeneous functions, every such $J$ has $\dim J\leq\mu_k$.
Choose $J_k$ maximal.

We claim $\dim J_k=\mu_k$. If not, take an adapted basis of $J_k$ and pass to a
tangent-cone limit. Its limits span a proper subspace of $R_{d_k}(C(Y))$. Choose
$f\in R_{d_k}(C(Y))$ orthogonal to this span. By the grafting lemma, $f$ is
approximated by holomorphic functions $f_i$ on the rescaled balls. The crossing
lemma implies that, for $i$ large, the metric order of $f_i$ is at most $d_k$,
and after projecting away the previously constructed pieces $\oplus_{j<k}J_j$,
the relative order is still at most $d_k$. Therefore \cite[Proposition~3.12]{DSII}
produces an adapted basis for $J_k\oplus\mathbb C\{F\}$ with degree $d_k$ for
some fixed grafted function $F\in I_k$. Also $F\notin I_{k+1}+J_k$, because its
tangent-cone limit has a nonzero component orthogonal to the limits of $J_k$.
This contradicts maximality. Hence $\dim J_k=\mu_k$.

It remains to show $I_k=I_{k+1}\oplus J_k$. Let $f\in I_k$. After subtracting an
element of $J_k$, we may assume that $f$ is $L^2$-orthogonal to $J_k$ on a large
rescaled ball. If the image of $f$ in $I_k/I_{k+1}$ were nonzero, the same
relative-order argument would produce one more adapted degree-$d_k$ element
orthogonal to $J_k$, contradicting $\dim J_k=\mu_k$. Hence the remainder lies in
$I_{k+1}$, proving the decomposition.
\end{proof}

\begin{lemma}[Finite generation of the associated graded ring]
There exists $D\gg1$ such that $R_p=\operatorname{gr}_\nu A$ is generated by $\bigoplus_{d_k\leq D}I_k/I_{k+1}$. Consequently $W=\operatorname{Spec}R_p$ admits a $T$-equivariant affine embedding into the same weight-decomposed vector space as $C(Y)$.
\end{lemma}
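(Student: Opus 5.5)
The plan follows the Donaldson--Sun argument \cite[Section~3.3]{DSII}, using the adapted complements lemma as the main input. First I identify the graded pieces. By the adapted complements lemma, $I_k/I_{k+1}\cong J_k$ and $\dim J_k=\mu_k=\dim R_{d_k}(C(Y))$. Hence the Hilbert function of $R_p$ agrees with the Hilbert function $h$ of $R(C(Y))$. By the spectral rigidity theorem, $h$ does not depend on the choice of tangent cone. Fix $D$ so that $R(C(Y))$ is generated by $E_D(C(Y))$, as in the discussion preceding the definition of $W$. Then choose the adapted bases of $J_k$ for $d_k\le D$, and put them together into a single finite-dimensional space $P=\bigoplus_{d_k\le D}J_k\subset A$ carrying an adapted sequence of bases $\{G_i^a\}$. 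Their tangent-cone limits form a homogeneous basis of $E_D(C(Y))$. These limits define the embedding $\Phi$, so the indexing set, the charges $d_a$, and the weights of the diagonal action on $\mathbb C^N$ are all shared.

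Next I compare the relations on both sides. Let $\mathcal I_W\subset\mathbb C[x_1,\dots,x_N]$ be the kernel of the graded map $x_a\mapsto[G^a]\in I_{k(a)}/I_{k(a)+1}$. Let $\mathcal I_C$ be the kernel of $x_a\mapsto g_a$, where $g_a$ is the limit on the cone. I would prove two things. (i) Every weighted-homogeneous polynomial $Q$ of charge $d$ with $Q\in\mathcal I_W$ has a limit-side counterpart. The reason is that $Q(G^1,\dots,G^N)\in I_{>d}$, so by the semivaluation property, the crossing lemma and convergence of $[f]_i$ (the metric order theorem), the rescaled function $Q(G_i)$ tends to $0$ relative to charge $d$. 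Hence $\operatorname{in}(\mathcal I_W)$ degenerates into $\mathcal I_C$, giving $\dim(\mathbb C[x]/\mathcal I_W)_d\ge\dim(\mathbb C[x]/\mathcal I_C)_d=h(d)$. (ii) In the other direction, the subalgebra $R'_p\subset R_p$ generated by the degree-$\le D$ pieces has Hilbert function at least $h(d)$ in each degree, by (i). Since $R_p$ has Hilbert function exactly $h(d)$, the inclusion $R'_p\subset R_p$ is an equality degree by degree. This is the finite generation statement.

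The main obstacle is step (i): one must show that relations in $\operatorname{gr}_\nu A$ pass to relations on the limit, and that no relation is lost. Concretely, I need that if $\nu(Q(G))>d$ then the normalized limit $\lim\lambda^{id}Q(G_i)$ vanishes, uniformly in the choice of tangent cone. I would first try the three-annulus and crossing lemma with $\bar d\in(d,d')\setminus S$. Combined with the uniform $L^\infty$ estimates of Lemma~\ref{lem:local-holomorphic-estimates}, this should force the limit to have charge $\ge d'>d$, and therefore to vanish as a charge-$d$ combination of the $g_a$. The triangular structure inside equal-charge blocks (condition (3) of the adapted basis definition) is what keeps the errors $p_i^a$ from contributing at charge $d$.

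Finally, the equivariant embedding follows directly. The generators $[G^a]$ of $R_p$ are homogeneous of charge $d_a$, so $W=\operatorname{Spec}R_p\hookrightarrow\mathbb C^N$ with the same diagonal weights. The torus $T$ acts on $W$ through the grading, by restricting the $T$-action on $\mathbb C^N$ that already preserves $\Phi(C(Y))$. Thus $W$ and $C(Y)$ lie in the same weight-decomposed vector space, and they have the same Hilbert function.
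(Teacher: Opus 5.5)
Your proposal is correct and is essentially the paper's (that is, Donaldson--Sun's) argument. Your step (i) says that charge-$d$ relations among the generators degenerate into the cone relations $\mathcal I_C$, since by the crossing lemma a nonzero charge-$d$ limit would force metric order $<d'$; this is the dual (kernel) form of the paper's splitting $I_k=T_{k,i}\oplus I_{k+1}$ with $T_{k,i}=F_i^*Q_k$, and in both versions the dimension count against $\dim I_k/I_{k+1}=\mu_k$ gives generation in charges $\le D$. The only cleanup is notational: index the relation ideal by the level $i$ (the ideals $\mathcal I_{W,i}$ of the classes $[G_i^a]$ are $G_\xi$-translates of one another, because each $G_i^a$ is a change of basis inside the fixed spaces $J_k$), and drop the factor $\lambda^{id}$, since $G_i^a$ is already normalized at scale $i$.
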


\begin{proof}
Choose $D$ so that $R(C(Y))$ is generated by
$E_D(C(Y))=\bigoplus_{d_k\leq D}R_{d_k}(C(Y))$, and set
$k_0=\max\{k:d_k\leq D\}$. Choose adapted bases for $P:=\bigoplus_{0<k\leq k_0}J_k$.
These bases define holomorphic maps $F_i:B_i\to\mathbb C^N$ converging to the
homogeneous embedding $\Phi:C(Y)\hookrightarrow\mathbb C^N$.

For each $k$, let $S_k$ be the space of weighted homogeneous polynomials on
$\mathbb C^N$ of charge $d_k$, and let $V_k:=\ker\bigl(S_k\to R_{d_k}(C(Y))\bigr)$.
Choose a complement $S_k=V_k\oplus Q_k$, so $Q_k\simeq R_{d_k}(C(Y))$.
Let $T_{k,i}\subset A$ be the pullback of $Q_k$ by $F_i$. Then, for $i$ large, by similar adapted basis arguments, we get $I_k=T_{k,i}\oplus I_{k+1}$. Since $R(C(Y))$ is generated in charges $\leq D$, the
same splitting implies that every graded piece $I_k/I_{k+1}$ of $R_p$ is
generated by the pieces with $d_k\leq D$. Hence $R_p$ is finitely generated.
\end{proof}

Now, let $G_\xi$ be the reductive subgroup of $GL(\mathbb{C}^N)$ which preserves the $T$-weight decomposition. We can think of it more concretely as let $\mathbb{C}^N = \bigoplus_{\chi} V_\chi$ be the $T$-weight decomposition determined by the embedding; then $G_\xi = \prod_{\chi} GL(V_\chi)$. 

Let $\operatorname{Hilb}_h^T$ be the multigraded Hilbert scheme parametrizing $T$-invariant affine subschemes of $\mathbb C^N$ with the fixed Hilbert function $h(d)=\mu_d.$ The action of $G_\xi$ thus also induces actions on $\mathbb{CP}^N$, and $\operatorname{Hilb}_h^T$. 

The $T$-grading is positive in the sense of
\cite[Corollary~1.2]{HaimanSturmfels}. Indeed, the coordinate functions of the
chosen homogeneous embedding have strictly positive Reeb charges. If a
nonconstant monomial had $T$-weight zero, pairing its weight with the Reeb
vector $\xi$ would give zero; on the other hand this pairing is the sum of its
nonnegative exponents times the strictly positive coordinate charges, hence is
strictly positive. Thus the only monomial of $T$-weight zero is $1$.
Consequently \cite[Corollary~1.2]{HaimanSturmfels} implies that
$\operatorname{Hilb}_h^T$ is projective.

\begin{lemma}[Ambient Donaldson--Sun degeneration]
\label{lem:ambient-dsii-degeneration}
The affine cones $W=\operatorname{Spec}\operatorname{gr}_\nu A$ and $C(Y)$
define points $[W],[C(Y)]\in\operatorname{Hilb}_h^T$. Moreover, after choosing
the Donaldson--Sun adapted bases \cite[Section~3.2]{DSII} attached to the tangent-cone subsequence, there are
elements $g_\alpha\in G_\xi$ such that
$g_\alpha\cdot[W]\to[C(Y)]$ in $\operatorname{Hilb}_h^T$.
\end{lemma}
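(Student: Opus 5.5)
The plan is to follow \cite[Section~3.3]{DSII} closely; the log boundary plays no role in this ambient statement.

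\textbf{Step 1: $[W]$ and $[C(Y)]$ are points of $\operatorname{Hilb}_h^T$.} For $C(Y)$ this is immediate. The embedding $\Phi$ is by a $T$-homogeneous basis of $E_D(C(Y))$, so the ideal $I_{C(Y)}$ is $T$-homogeneous. The Hilbert function of $\mathbb C[x_1,\dots,x_N]/I_{C(Y)}$ in charge $d$ is $\dim R_d(C(Y))=\mu_d$.

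For $W$, use the finite generation lemma. Choose a basis of $\bigoplus_{0<k\le k_0}J_k$ compatible with the adapted bases, and send the coordinate $x_a$ to the class of the corresponding element in $I_{k(a)}/I_{k(a)+1}$. This gives a surjection $\mathbb C[x_1,\dots,x_N]\to R_p$. Its kernel is homogeneous for the charge grading.

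To make $W$ $T$-equivariant I have to refine the charge grading to the $T$-weight grading. I would do this by transporting the $T$-weights along the identification $J_k\simeq R_{d_k}(C(Y))$ given by the tangent-cone limits of the adapted bases. By the adapted complements lemma, $\dim I_k/I_{k+1}=\dim J_k=\mu_k$. So the Hilbert function of $R_p$ is $h$ once the weight refinement is fixed, and $[W]\in\operatorname{Hilb}_h^T$.

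There is a subtlety: $R_p$ is a priori only $S$-graded. If this refinement is awkward, I would instead work in the charge-graded Hilbert scheme. That scheme is still projective by \cite[Corollary~1.2]{HaimanSturmfels}, since the charges are positive. I would then replace $G_\xi$ by the group preserving the charge decomposition, which contains the $G_\xi$ above.

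\textbf{Step 2: construct $g_\alpha$.} Let $r_\alpha=\lambda^{-i_\alpha}$ be the scales realizing $C(Y)$. The adapted basis $\{G^a_{i_\alpha}\}$ defines maps $F_{i_\alpha}:B_{i_\alpha}\to\mathbb C^N$. As $\alpha\to\infty$ these converge to $\Phi$ on the unit ball by property (3) and the local estimates lemma.

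I compare $\{G^a_{i_\alpha}\}$ with the fixed basis $\{G^a_{0}\}$ of $P$. By property (2), the two are related by an upper-triangular matrix $M_\alpha$. Its diagonal is $\prod_{i\le i_\alpha}\mu_{ia}$, and its off-diagonal part lies only inside the equal-charge blocks. Hence $M_\alpha$ is block diagonal for the charge decomposition, and I set $g_\alpha:=M_\alpha^{-1}\in G_\xi$, or in the charge-block group if Step 1 forced that.

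\textbf{Step 3: convergence.} The image $g_\alpha\cdot[W]$ is the graded initial scheme of the ideal of polynomial relations among the $G^a_{i_\alpha}$. Take a homogeneous relation of charge $d_k$ vanishing on $W$. It lifts to a polynomial $Q$ with $Q(G_0)\in I_{k+1}$. By the definition of $\nu$, the rescaled function $Q(G_{i_\alpha})$ has $L^2$ norm on $B_{i_\alpha}$ tending to $0$ relative to charge $d_k$. Passing to the limit, $Q$ vanishes on $\Phi(C(Y))$, so every limit point of $g_\alpha\cdot[W]$ has ideal contained in $I_{C(Y)}$.

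Projectivity of $\operatorname{Hilb}_h^T$ gives subsequential limits. All these points share the Hilbert function $h$, so the containment of ideals is an equality, and $g_\alpha\cdot[W]\to[C(Y)]$.

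\textbf{Main obstacle.} The hardest part is Step 3. It requires controlling polynomial relations uniformly up to the generating degree, which means uniform crossing-lemma estimates for the finitely many charges up to the degree generating the ideal. I would handle this with \cite[Proposition~3.15]{DSII}, using the grafting lemma in place of the two-sided Ricci input of \cite{DSII}.
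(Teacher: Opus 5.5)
Your proposal is correct in outline and uses the same framework as the paper. The element $g_\alpha$ is the inverse of the change-of-adapted-basis matrix $M_\alpha$. That matrix is block diagonal for the charge decomposition because of the equal-charge triangularity of adapted bases; your Step 2 makes explicit what the paper only asserts. Convergence then rests on the fact that elements of $I_{k+1}$ cannot produce a nonzero charge-$d_k$ limit at the scales $i_\alpha$.

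Where you differ is the direction of the convergence step.
\begin{itemize}
\item The paper fixes a relation $f\in V_k$ of $C(Y)$. It uses the splitting $I_k=T_{k,\alpha}\oplus I_{k+1}$ to write $F_\alpha^*f=g'_\alpha+F_\alpha^*h_\alpha$, shows $h_\alpha\to0$, and so exhibits explicit relations $f-h_\alpha$ of $W_\alpha$ converging to $f$. The dimension count is left implicit.
\item You instead show that every Grassmannian limit of charge-$d_k$ relations of $g_\alpha W$ vanishes on $\Phi(C(Y))$. You then conclude from equality of Hilbert functions and projectivity of $\operatorname{Hilb}_h^T$.
\end{itemize}
Your route avoids the $T_{k,\alpha}$-splitting at this step, though it still enters through the finite generation lemma behind your Step 1. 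The paper's route produces approximating equations of the form ``adapted leading term plus higher order'', and that form is reused later when the boundary components are tracked.

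Three things need repair.

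\emph{Step 3 uses the wrong relations.} As written, you take a fixed relation $Q$ of $W$ with $Q(G_0)\in I_{k+1}$ and study $Q(G_{i_\alpha})$. But $Q(G_{i_\alpha})=Q(M_\alpha^{-1}G_0)$ need not lie in $I_{k+1}$. If the argument worked for a fixed $Q$, it would show the ideal of the embedded $W$ is contained in that of $\Phi(C(Y))$, forcing a trivial second degeneration. The relations of $g_\alpha W$ are $Q\circ M_\alpha$. So you must work with normalized $\alpha$-dependent relations $Q_\alpha$ in the charge-$d_k$ part of the ideal of $g_\alpha W$, with $Q_\alpha\to Q_\infty$. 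For these, $Q_\alpha(G_{i_\alpha})$ is a multiple of the fixed germ $Q(G_0)\in I_{k+1}$.

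\emph{The definition of $\nu$ is not enough.} It controls a fixed germ as $i\to\infty$, not the moving germs $Q_\alpha(G_{i_\alpha})$ at the scale $i_\alpha$. The uniform input you need is the crossing lemma proved inside the metric-order theorem. For $\bar d\in(d_k,d_{k+1})$ there is $i_0$ such that every nonzero $h\in I_{k+1}$ satisfies $\|h\|_{i+1}<\lambda^{-\bar d}\|h\|_i$ for all $i\geq i_0$. Hence a nonzero limit $Q_\infty\circ\Phi$ would decay from $B$ to $\lambda^{-1}B$ at rate at least $\lambda^{-\bar d}$. This is impossible for a homogeneous function of charge $d_k<\bar d$. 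The crossing lemma is already available in the paper, so no new grafting input or DSII citation is needed here.

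\emph{The Step 1 worry is not a real issue.} Since $T$ is the closure of the Reeb flow, distinct characters of $T$ have distinct pairings with $\xi$. So $T$-weight spaces are exactly charge spaces, $G_\xi$ already is the charge-block group, and a charge-homogeneous ideal is automatically $T$-invariant.

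Finally, $F_{i_\alpha}\to\Phi$ holds only after passing to a subsequence and allowing a unitary $K_\xi$ correction, which the paper absorbs into $g_\alpha$; you should do the same.
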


\begin{proof}
Choose $D$ large enough so that the coordinate ring $R(C(Y))$ is generated by
$E_D(C(Y))=\bigoplus_{0<d<D}R_d(C(Y))$. Let
$d_1<\cdots<d_{k_0}<D$ be the corresponding charges. By
\cite[Proposition~3.14]{DSII}, the metric filtration
$I_k=\{f\in A:\nu(f)\geq d_k\}$ admits decompositions
$I_k=I_{k+1}\oplus J_k$, where each $J_k$ has an adapted sequence of bases of
charge $d_k$. Set $P:=\bigoplus_{0<k\leq k_0}J_k$.

The adapted bases of $P$ define holomorphic maps
$F_\alpha:B_\alpha\to\mathbb C^N$. After passing to a subsequence, and after
absorbing the compact $K_\xi$ ambiguity, Donaldson--Sun obtain
$F_\alpha\to\Phi:C(Y)\to\mathbb C^N$ locally uniformly, where $\Phi$ is the
homogeneous embedding by an orthonormal basis of $E_D(C(Y))$. The same adapted
bases also embed $W=\operatorname{Spec}\operatorname{gr}_\nu A$ into
$\mathbb C^N$; denote the image by $W_\alpha$. Equivalently,
$W_\alpha=g_\alpha W$ for some $g_\alpha\in G_\xi$.

Let $S_k$ be the space of weighted homogeneous polynomials of weighted degree
$d_k$ on $\mathbb C^N$. Let $V_k\subset S_k$ be the kernel of the restriction
map $S_k\to R_{d_k}(C(Y))$, and fix a complement $S_k=V_k\oplus Q_k$. For
large $\alpha$, let $T_{k,\alpha}:=F_\alpha^*Q_k\subset A$. By
\cite[Lemma~3.15]{DSII}, $I_k=T_{k,\alpha}\oplus I_{k+1}$.

We now recall the key step in \cite[Proposition~3.16]{DSII}. Take
$f\in V_k$. Since $F_\alpha^*f\in I_k$, the splitting
$I_k=T_{k,\alpha}\oplus I_{k+1}$ gives a unique decomposition
$F_\alpha^*f=g_\alpha'+F_\alpha^*h_\alpha$, with
$g_\alpha'\in I_{k+1}$ and $h_\alpha\in Q_k$. The proposition shows that
$h_\alpha\to0$ in the finite-dimensional space $Q_k$. Indeed, if not, after
normalizing one can assume $\|h_\alpha\|=1$ and pass to a limit
$h_\infty\neq0$. Since $F_\alpha\to\Phi$ and $f$ vanishes on $\Phi(C(Y))$, the
limits satisfy $g_\infty+h_\infty=0$. Thus $g_\infty$ is a nonzero homogeneous
function of charge $d_k$. But $g_\alpha'\in I_{k+1}$ has metric order
strictly larger than $d_k$, so its nonzero tangent-cone limit cannot have charge
$d_k$. This contradiction proves $h_\alpha\to0$.

Therefore, for every relation $f\in V_k$ on $C(Y)$, there are relations
$f_\alpha:=f-h_\alpha$ cutting out $W_\alpha$ in weighted degree $d_k$, with
$f_\alpha\to f$. Taking a basis of $V_k$ for all $k$ up to the bounded degree
which generates the multigraded Hilbert ideal, the defining homogeneous pieces
of the ideals of $W_\alpha$ converge to those of $C(Y)$. Hence
$[W_\alpha]\to[C(Y)]$ in the multigraded Hilbert scheme. Since
$W_\alpha=g_\alpha W$, this is the desired convergence
$g_\alpha\cdot[W]\to[C(Y)]$.
\end{proof}

\begin{corollary}The affine variety $W$ is normal.\end{corollary}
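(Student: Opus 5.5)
We follow the strategy of Donaldson--Sun \cite[Section~3]{DSII}: deduce normality of $W$ from normality of $C(Y)$ by semicontinuity along a degeneration. The ambient Donaldson--Sun degeneration, Lemma~\ref{lem:ambient-dsii-degeneration}, gives elements $g_\alpha\in G_\xi$ with $g_\alpha\cdot[W]\to[C(Y)]$ in the projective multigraded Hilbert scheme $\operatorname{Hilb}_h^T$. Hence $[C(Y)]$ lies in the closure of the $G_\xi$-orbit of $[W]$. Since $G_\xi$ is a reductive algebraic group acting algebraically on $\operatorname{Hilb}_h^T$, we will replace the sequence $g_\alpha$ by an algebraic one-parameter family. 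Concretely, we invoke the standard curve-selection lemma for orbit closures, or Luna's/Hilbert--Mumford in the form used in \cite[Section~3]{DSII}, to obtain a smooth pointed curve $(B,0)$ and a morphism $B\to\overline{G_\xi\cdot[W]}$. This morphism sends the general point into the orbit $G_\xi\cdot[W]$ and sends $0$ to $[C(Y)]$. Pulling back the universal family gives a flat family $\mathcal W\to B$ of $T$-invariant affine subschemes of $\mathbb C^N$. Its general fibre is isomorphic to $W$ and its central fibre is $C(Y)$.

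Next we use openness of normality in flat families. By Theorem~\ref{thm:tangent-cone-log-structure}, $C(Y)$ is a normal affine variety, in particular reduced and irreducible. For a flat family of finite type over a curve, the set of points of $B$ with geometrically normal fibre is open (EGA IV, 12.2.4). This holds after restricting to the $T$-invariant neighbourhood of the vertex, which suffices because every closed $T$-orbit closure passes through the vertex under the positive grading shown before the lemma. Since the central fibre is normal, nearby fibres are normal. Those fibres are isomorphic to $W$, so $W$ is normal, and in particular $R_p=\operatorname{gr}_\nu A$ is an integrally closed domain. This also upgrades $\nu$ to a genuine valuation, since $I_\infty$ must then be zero.

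One subtlety is that the properness/openness statement is local on the total space, while $W$ and $C(Y)$ are only affine. We handle this via the $\mathbb C^*$-action generated by a rational approximation of $\xi$. The non-normal locus of each fibre is closed and $T$-invariant, so if nonempty its closure contains the vertex. It therefore suffices to check normality in a neighbourhood of the vertex section of $\mathcal W$, and there EGA openness applies directly.

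The main obstacle is the first step: extracting an algebraic curve from the analytic sequence $g_\alpha$, since the $g_\alpha$ may diverge in $G_\xi$. We first try the simplest route, which avoids curves: the locus $\mathcal N\subset\operatorname{Hilb}_h^T$ of points whose fibre is normal is open and $G_\xi$-invariant. It contains $[C(Y)]$, so $g_\alpha\cdot[W]\in\mathcal N$ for large $\alpha$. By invariance, $[W]\in\mathcal N$. This needs only openness of normality on the universal family over the Hilbert scheme, which is flat by construction, and the vertex reduction above. We expect this version to be the cleanest.
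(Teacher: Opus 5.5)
Your final argument is exactly the paper's proof: the locus of normal fibres of the flat universal family over $\operatorname{Hilb}_h^T$ is open and $G_\xi$-invariant and contains $[C(Y)]$, so it contains $g_\alpha\cdot[W]$ for large $\alpha$ and hence contains $[W]$; the curve-selection detour is unnecessary. Your reduction to the vertex section, using a contracting one-parameter subgroup of $T$ to make the normal-fibre locus open even though the affine universal family is not proper, supplies a justification that the paper leaves implicit. Your side remark that normality forces $I_\infty=0$ does not follow from this argument, but the corollary does not need it.
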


\begin{proof}
By the previous lemma, after replacing $W$ by embedded copies $W_i$ in its
$G_\xi$-orbit, we have $[W_i]\to[C(Y)]$ in $\operatorname{Hilb}^T$.
The universal family over this Hilbert scheme is flat. Since $C(Y)$ is normal and normality is open in flat families, all fibers over a sufficiently small neighborhood of $[C(Y)]$ are normal. Thus $W_i$ is normal for $i$ large. Since $W_i$ is obtained from $W$ by an element of $G_\xi$, $W$ is normal.
\end{proof}

Let $(X_\infty,\Delta_{X_\infty})$ be the pair obtained from the weak conical KE equation. By the proof of the structure theorem, if $\mathcal N$ is the uniform denominator for the approximating family $\mathcal F$, then $\mathcal N\Delta_{X_\infty}$ is an integral Weil divisor and $\mathcal N(K_{X_\infty}+\Delta_{X_\infty})$ is Cartier. 

Since $X_\infty$ is normal and Noetherian, every Weil divisor has a unique finite decomposition as a $\mathbb Z$-linear combination of prime Weil divisors, i.e. irreducible codimension-one subvarieties. Thus, near $p$, after discarding components not passing through $p$, we may write $\Delta_{X_\infty}=\sum_a(1-\beta_a)D_a$, where each $D_a$ corresponds to a height-one prime ideal $\mathfrak p_a\subset A:=\mathcal O_{X_\infty,p}$.

If $D_a$ happens to be Cartier at $p$, say $\mathfrak p_a=(f_a)$, then the first degeneration is easy to describe: $\operatorname{in}_\nu(\mathfrak p_a)=(\operatorname{in}_\nu f_a)$, and $D_{a,W}$ is the codimension-one part of $\operatorname{div}_W(\operatorname{in}_\nu f_a)$. But in general $D_a$ is only a prime Weil divisor. At the generic point of $D_a$, it is Cartier because $X_\infty$ is normal, equivalently $\mathcal O_{X_\infty,\eta_a}$ is a DVR. However, a local equation at the generic point need not extend to an element of $\mathcal O_{X_\infty,p}$. Thus the correct local datum at $p$ is the height-one prime ideal $\mathfrak p_a$, not a single defining function. 

The components $D_a$ need not be $\mathbb Q$-Cartier. Thus the first
degeneration must be defined from the height-one prime ideal
$\mathfrak p_a$, rather than from a chosen local defining equation.
Moreover, degeneration need not preserve primeness: a prime divisor may split
into several codimension-one components or acquire multiplicity in the
associated graded ring. The resulting object is therefore naturally a Weil
cycle. We now make this construction precise.

\begin{definition} [Divisors in $W$]
For each $j$ with $p\in D_j$, let $\mathfrak p_j\subset A$ be the height-one prime ideal defining $D_j$ at $p$. Define the initial ideal $\operatorname{in}_\nu(\mathfrak p_j)
        =
        \bigoplus_{d\in S}
        \frac{\mathfrak p_j\cap I_d+I_{>d}}{I_{>d}}
        \subset R_p.
$
Then define $D_{j,W}
        :=
        \left[
        \operatorname{Spec}
        \left(R_p/\operatorname{in}_\nu(\mathfrak p_j)\right)
        \right]_{n-1}.
$
Finally, set
\[
\Delta_W:=\sum_{j:p\in D_j}(1-\beta_j)D_{j,W}.
\]
\end{definition}

Choose an integer $N_\Delta>0$ such that
\[
q_j:=N_\Delta(1-\beta_j)\in\mathbb Z
\]
for every component under consideration, and set
\[
\Gamma_W:=N_\Delta\Delta_W
=
\sum_{j:p\in D_j}q_jD_{j,W}.
\]

Let $d$ be the degree of the projective closure $\overline{\Gamma_W}\subset
\mathbb P^N$. Let $\operatorname{Chow}_{n-1,d}^T$ denote the projective Chow variety parametrizing $T$-invariant effective $(n-1)$-dimensional cycles of
degree $d$ in $\mathbb P^N$.

We retain only the pure codimension-one cycle because the log boundary is a
$\mathbb{Q}$-Weil divisor. This forgets embedded components, nilpotent structure, and
higher-codimension scheme structure, but it retains the support and
generic multiplicities needed for the boundary coefficients.

\begin{definition}[Hilbert--Chow incidence space]
Let
\[
    \mathcal P
    :=
    \left\{
    ([Z],[\Gamma])\in
    \operatorname{Hilb}_h^T\times \operatorname{Chow}_{n-1,d}^T
    :
    \operatorname{Supp}\Gamma\subset \overline Z
    \right\}.
\]
Here $\overline Z\subset\mathbb P^N$ denotes the projective closure of the
affine scheme $Z\subset\mathbb C^N$.
\end{definition}

By the positivity argument above, $\operatorname{Hilb}_h^T$ is projective.
The projective Chow variety of effective cycles of fixed dimension and degree is
standard; see \cite[Chapter~I]{KollarRationalCurves}. Hence the closed
$T$-fixed locus $\operatorname{Chow}_{n-1,d}^T$ is projective as well.
and the incidence condition
$\operatorname{Supp}\Gamma\subset\overline Z$
is closed in families. Thus $\mathcal{P}$ is projective. Moreover, the action of $G_\xi$ on $\mathbb C^N$ induces
actions on both factors and preserves the incidence condition, so
$\mathcal P$ is a projective $G_\xi$-scheme.

Let $x_W:=([W],[\Gamma_W])\in \mathcal P$, and $g_\alpha W\to C(Y)$ be the convergence from Lemma~\ref{lem:ambient-dsii-degeneration} in $\operatorname{Hilb}_h^T$. Since $g_\alpha\Gamma_W$ is a sequence of effective $T$-invariant codimension-one cycles of fixed degree $d$, projectivity of
$\operatorname{Chow}_{n-1,d}^T$ implies that, after passing to a subsequence, $g_\alpha\Gamma_W\longrightarrow \Gamma_C^{\mathrm{alg}}$
in $\operatorname{Chow}_{n-1,d}^T$. We define the central boundary cycle using this. 

\begin{definition}[Algebraic central boundary cycle]
For such a subsequential Chow limit, define $\Gamma_C^{\mathrm{alg}}
    :=
    \lim_{\alpha\to\infty}g_\alpha\Gamma_W,
    \qquad
    \Delta_C^{\mathrm{alg}}
    :=
    \frac{1}{N_\Delta}\Gamma_C^{\mathrm{alg}}.
$
\end{definition}

Note that, a priori, $\Gamma_C^{\mathrm{alg}}$ may depend on the chosen
Chow-convergent subsequence. At this stage it is only a candidate central
boundary cycle. The point of the comparison theorem below is precisely to show
that every such limit agrees with $N_\Delta\Delta_C^{\mathrm{met}}$.

By Lemma~\ref{lem:ambient-dsii-degeneration}, $g_\alpha W\to C(Y)$ in
$\operatorname{Hilb}_h^T$, and by construction
$g_\alpha\Gamma_W\to \Gamma_C^{\mathrm{alg}}$ in
$\operatorname{Chow}_{n-1,d}^T$. Since
$\operatorname{Supp}(g_\alpha\Gamma_W)\subset \overline{g_\alpha W}$ for every
$\alpha$, closedness of the incidence condition gives
$x_C^{\mathrm{alg}}:=([C(Y)],[\Gamma_C^{\mathrm{alg}}])\in\mathcal P$.
Moreover $x_C^{\mathrm{alg}}\in\overline{G_\xi\cdot x_W}$.

The following warning is important. Hilbert--Chow compactness alone does not
identify $\Gamma_C^{\mathrm{alg}}$ with the metric blow-up of the boundary. If
$U\subset W$ is an arbitrary cycle and $g_\alpha(W,U)\to(C(Y),U')$, there is no
reason for $U'$ to be a metric blow-up of anything on $X_\infty$. The
identification is special to cycles which come from filtered ideals in
$A=\mathcal O_{X_\infty,p}$, because the adapted-basis maps $g_\alpha$ are the
change-of-adapted-basis maps. In that case the equations defining
$g_\alpha U$ are the leading terms of the rescaled analytic equations on
$X_\infty$, so the Chow limit and the metric limit are cut out by the same
limiting equations.

Write
\[
\Delta_{X_\infty}=\sum_j a_jD_j
\]
near $p$, and recall the integer $N_\Delta$ fixed above. Thus
$q_j:=N_\Delta a_j\in\mathbb Z$, and we set
\[
\Gamma_{X_\infty}:=N_\Delta\Delta_{X_\infty}
=
\sum_j q_jD_j.
\]

For a component $D_j$, let $\mathfrak p_j\subset A$ be its height-one ideal.
Define $\operatorname{in}_\nu(\mathfrak p_j)\subset\operatorname{gr}_\nu A$ to
be the homogeneous ideal generated by all $\operatorname{in}_\nu(f)$ with
$f\in\mathfrak p_j$. The first degeneration of $D_j$ is
$D_{j,W}:=[\operatorname{Spec}(\operatorname{gr}_\nu A/
\operatorname{in}_\nu(\mathfrak p_j))]_{n-1}$, and
Thus the previously defined cycle satisfies
\[
\Gamma_W=\sum_j q_jD_{j,W}.
\]

\begin{lemma}[Weil boundary components and the two-step DSII degeneration]
\label{lem:weil-boundary-two-step}
Let the adapted-basis maps $g_\alpha$ define the second degeneration
$g_\alpha W\to C(Y)$ attached to the chosen tangent-cone subsequence.
After passing to a Chow-convergent subsequence,
$g_\alpha\Gamma_W\to\Gamma_C^{\mathrm{alg}}$. Then
$\Gamma_C^{\mathrm{alg}}$ is the codimension-one cycle obtained by taking the
metric blow-up limits of the divisor components $D_j$ along the same
tangent-cone subsequence and weighting them by $q_j$.

Equivalently, if $E$ is an irreducible component of
$\operatorname{Supp}\Gamma_C^{\mathrm{alg}}$ and $q\in E$ is a generic smooth
point away from all other components and higher-codimension strata, then
$\operatorname{coeff}_E(\Gamma_C^{\mathrm{alg}})=\sum_j q_jm_{j,E}$, where
$m_{j,E}$ is the generic multiplicity with which the metric blow-up of $D_j$
contains $E$.
\end{lemma}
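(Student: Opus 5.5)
The proof will compare both cycles through their local equations, at generic points of the limit support, and then match multiplicities by intersection with transverse discs. Fix a component $D_j$ with height-one prime $\mathfrak p_j$. Since $\operatorname{gr}_\nu A$ is finitely generated, I will pick finitely many $f_{j,1},\dots,f_{j,s}\in\mathfrak p_j$ whose initial forms $\operatorname{in}_\nu f_{j,l}$ generate $\operatorname{in}_\nu(\mathfrak p_j)$. The Donaldson--Sun adapted-basis argument from the proof of Lemma~\ref{lem:ambient-dsii-degeneration} is then applied not to relations $f\in V_k$ but to the elements $f_{j,l}$. Writing $f_{j,l}$ in the adapted coordinates $F_\alpha$ as a weighted polynomial plus an element of $I_{k+1}$, I expect the normalized functions $[f_{j,l}]_\alpha$ to converge, by the metric-order theorem (part (2)), to nonzero homogeneous functions $\Phi_{j,l}$ on $C(Y)$. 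These limits should coincide with the limits of the pushed-forward initial forms $g_\alpha\cdot\operatorname{in}_\nu f_{j,l}$ on $g_\alpha W$. This is the precise sense in which the equations of $g_\alpha D_{j,W}$ are leading terms of rescaled analytic equations of $D_j$. Consequently the supports agree: $\operatorname{Supp}\Gamma_C^{\mathrm{alg}}$ lies in the common zero set of the $\Phi_{j,l}$, and the metric blow-up of $D_j$, which by Lemma~\ref{lem:boundary-support-convergence} is the Kuratowski limit of $D_j$, also lies in this zero set, because the $[f_{j,l}]_\alpha$ vanish on the rescaled $D_j$ and converge locally uniformly by Lemma~\ref{lem:local-holomorphic-estimates}.

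To match multiplicities, I fix an irreducible component $E$ and a generic point $q\in E$ as in the statement, lying in $\Delta_{C(Y)}^{reg}\cap S'(C(Y))$, which is dense by Lemma~\ref{lem:boundary-support-convergence}. I then take a small holomorphic transverse disc $T\ni q$ meeting $\operatorname{Supp}\Gamma_C^{\mathrm{alg}}\cup\Delta_{C(Y)}$ only at $q$.

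On the algebraic side, Chow convergence gives $\operatorname{coeff}_E(\Gamma_C^{\mathrm{alg}})=\lim_\alpha \#(g_\alpha\Gamma_W\cap T_\alpha)$, counted with multiplicity, for nearby transverse discs $T_\alpha$. This count can be computed by Poincar\'e--Lelong, as the mass of $\sum_j q_j\, i\partial\bar\partial\log|\cdot|$ of local generators restricted to $T_\alpha$.

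On the metric side, the grafting lemma and the Liu--Sz\'ekelyhidi charts let me pull $T$ back to discs $T_i$ in the rescaled $X_\infty$. Along these, $\sum_j q_j\#(D_j\cap T_i)$ is controlled as in Lemma~\ref{lem:tubular-trapping}: the holonomy identity gives $\sum_j q_j\#(D_j\cap T_i)\to N_\Delta(1-\beta_E)$, with $\beta_E$ the cone angle of $C(Y)$ along $E$. In parallel, the Weierstrass polynomials of $D_j$ in these charts converge, so Rouch\'e's theorem on $\partial T_i$ gives $\#(D_j\cap T_i)\to m_{j,E}$.

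Finally, both intersection counts must be computed from the same family of functions. Near $q$ the relevant holomorphic functions on $T_i$ and on $T_\alpha$ converge uniformly, with nonvanishing limit on $\partial T$, to the same limit, namely the restriction of a generator of the limiting ideal cut out by the $\Phi_{j,l}$. Rouch\'e's theorem then equates their zero counts. Summing over $j$ with weights $q_j$ gives $\operatorname{coeff}_E(\Gamma_C^{\mathrm{alg}})=\sum_j q_jm_{j,E}$.

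\textbf{Main obstacle.} The key difficulty is that $D_j$ is only Weil, so no single $f\in\mathfrak p_j$ cuts out $D_j$ with multiplicity one near $p$. The limits $\Phi_{j,l}$ might therefore vanish to extra order along $E$, or pick up spurious components. I would handle this by working at the generic point of $E$. There a suitable combination $\sum_l c_l f_{j,l}$ has initial form vanishing to order exactly the multiplicity of $D_{j,W}$ along the corresponding component, since $\operatorname{gr}_\nu A$ is a normal domain and hence a DVR at that generic point. The remaining task is to show that the metric zero count of this combination on $T_i$ equals $m_{j,E}$ and not more. Here I would use that the extra zeros of $\sum_l c_l f_{j,l}$ lie off $D_j$, and, by the trapping argument of Lemma~\ref{lem:tubular-trapping}, escape $T_i$ for generic $c$, since the finite angle set forbids boundary accumulating at $q$ except along $E$.
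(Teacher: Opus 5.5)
Your first step is the same as the paper's. You lift generators of $\operatorname{in}_\nu(\mathfrak p_j)$ to $f_{j\ell}\in\mathfrak p_j$, write $f_{j\ell}=F_\alpha^*q_{j\ell,\alpha}+r_{j\ell,\alpha}$ with $\nu(r_{j\ell,\alpha})>\nu(f_{j\ell})$, and conclude that the rescaled $D_j$ and $g_\alpha D_{j,W}$ have the same limiting equations. Note that this only places both limits inside $V(\Phi_{j,1},\dots,\Phi_{j,s})$, so ``the supports agree'' is not yet proved.

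The genuine gap is the multiplicity comparison through a single generic element $h_c=\sum_l c_lf_{j,l}$. This reduction creates residual divisors. For generic $c$, write $\operatorname{div}(h_c)=D_j+R_c$ near $p$. Whenever $D_j$ is not Cartier at $p$ (the case you are worried about), $R_c$ is nonzero and passes through $p$, because its class in $\operatorname{Cl}(\mathcal O_{X_\infty,p})$ is $-[D_j]\neq0$. Its blow-up limit lies in $V(\sum_lc_l\Phi_{j,l})$, and nothing in your argument keeps it off $E$.

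Lemma~\ref{lem:tubular-trapping} cannot supply this. Trapping uses only the density bound $\le 1-1/N_{\mathcal F}$ at points of $\operatorname{Supp}\Delta$, which stops \emph{boundary} points from converging to regular points of the limit. $R_c$ is not boundary and carries no cone angle. Its points are generically regular points of $X_\infty$, and such points can accumulate at the conical point $q$ without any contradiction.

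The same problem on the algebraic side is not addressed at all. Normality of $W$ fixes the order of $\sum_lc_l\bar f_{j,l}$ along the components of $D_{j,W}$ on $W$. However, the residual part of $\operatorname{div}_W(\sum_lc_l\bar f_{j,l})$ can be carried onto $E$ by $g_\alpha$, so $\operatorname{ord}_E(\sum_lc_l\Phi_{j,l})$ may exceed $\operatorname{coeff}_E\lim_\alpha g_\alpha D_{j,W}$. Rouch\'e therefore only equates the two \emph{total} zero counts: $\#(D_j\cap T_i)+\#(R_c\cap T_i)$ on one side, and the algebraic count including its own residual on the other. The identity $\operatorname{coeff}_E(\Gamma_C^{\mathrm{alg}})=\sum_jq_jm_{j,E}$ does not follow.

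The paper does not reduce to one function. It keeps the whole ideal $(f_{j\ell})_\ell$. On the metric side, the common zero set of this ideal is $D_j$ itself once the lifts generate $\mathfrak p_j$, so there is no residual. On the algebraic side it is $g_\alpha\operatorname{Spec}(R_p/\operatorname{in}_\nu\mathfrak p_j)$. The paper then reads off both multiplicities at a generic point of $E$ as the transverse order of the single limiting ideal $(q_{j\ell,\infty})_\ell$. Choosing the $f_{j\ell}$ as lifts of a generating set of $\operatorname{in}_\nu(\mathfrak p_j)$ is what is meant to make this limiting ideal the right one on both sides. A transverse-disc argument would have to be run at this ideal level, for instance via continuity of the Hilbert--Chow cycle in the flat family, not for a generic element.

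Finally, the holonomy limit $\sum_jq_j\#(D_j\cap T_i)\to N_\Delta(1-\beta_E)$ plays no role in this lemma. It is the input for Theorem~\ref{thm:alg-met-boundary-agree}. It constrains only the weighted sum, not the individual $m_{j,E}$, and it does not involve $\Gamma_C^{\mathrm{alg}}$.
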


\begin{proof}
Fix $j$. The quotient filtration on $A/\mathfrak p_j$ gives
$\operatorname{gr}_\nu(A/\mathfrak p_j)\cong
\operatorname{gr}_\nu A/\operatorname{in}_\nu(\mathfrak p_j)$. Thus $D_{j,W}$
is the pure codimension-one part of the Rees degeneration of the germ
$(D_j,p)$. This formulation uses the ideal $\mathfrak p_j$, so no local
principal equation for $D_j$ is required.

Choose homogeneous generators $\bar f_{j\ell}$ of
$\operatorname{in}_\nu(\mathfrak p_j)$ in the degrees needed for the fixed
Hilbert--Chow embedding, and lift them to elements
$f_{j\ell}\in\mathfrak p_j$ with
$\operatorname{in}_\nu(f_{j\ell})=\bar f_{j\ell}$. Write
$e_{j\ell}:=\nu(f_{j\ell})$. For each $\alpha$, the adapted-basis property gives
weighted homogeneous polynomials $q_{j\ell,\alpha}$ such that
$f_{j\ell}=F_\alpha^*q_{j\ell,\alpha}+r_{j\ell,\alpha}$, with
$\nu(r_{j\ell,\alpha})>e_{j\ell}$.

The equations $q_{j\ell,\alpha}=0$ cut out $g_\alpha D_{j,W}$ inside
$g_\alpha W$, up to embedded and higher-codimension components. On the other
hand, after rescaling the original analytic germ $D_j$, the equations
$f_{j\ell}=0$ have the same leading equations $q_{j\ell,\alpha}=0$, since the
errors $r_{j\ell,\alpha}$ have strictly larger metric order. Passing to the
tangent-cone limit, $q_{j\ell,\alpha}$ converges to a homogeneous function
$q_{j\ell,\infty}$ on $C(Y)$. Therefore both the Chow limit of
$g_\alpha D_{j,W}$ and the metric blow-up of $D_j$ are cut out, in pure
codimension-one, by the same limiting ideal
$(q_{j\ell,\infty})_\ell$ on $C(Y)$.

Taking the weighted sum over $j$ gives the statement for
$\Gamma_W=\sum_j q_jD_{j,W}$. The coefficient formula follows by evaluating at a
generic smooth point of an irreducible component $E$: at such a point the
codimension-one cycle multiplicity is exactly the transverse order of vanishing
of the limiting ideal, and the total coefficient is the sum of the contributions
$q_jm_{j,E}$.
\end{proof}

\begin{theorem}[Algebraic and metric boundaries agree]
\label{thm:alg-met-boundary-agree}
We have $\Gamma_C^{\mathrm{alg}}=N_\Delta\Delta_C^{\mathrm{met}}$ as
codimension-one Chow cycles. Equivalently,
$\Delta_C^{\mathrm{alg}}=\Delta_C^{\mathrm{met}}$ as $\mathbb Q$-Weil divisors
on $C(Y)$.
\end{theorem}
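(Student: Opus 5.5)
The plan is to compare the two cycles component by component, using Lemma~\ref{lem:weil-boundary-two-step} for the algebraic side and Lemma~\ref{lem:boundary-support-convergence} together with the local structure theory for the metric side. Recall that $\Delta_C^{\mathrm{met}}=\sum_E(1-\beta_E)E$, where $E$ ranges over the irreducible codimension-one components of the metric singular set of $C(Y)$, and $\beta_E$ is the cone angle at a generic point of $E$ (Theorem~\ref{thm:tangent-cone-log-structure}). By Lemma~\ref{lem:weil-boundary-two-step}, the cycle $\Gamma_C^{\mathrm{alg}}$ is the $q_j$-weighted sum of the metric blow-up limits of the $D_j$. Its coefficient along $E$ is $\sum_j q_jm_{j,E}$. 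The statement therefore reduces to two claims:
\begin{enumerate}
\item the supports agree, $\operatorname{Supp}\Gamma_C^{\mathrm{alg}}=\Delta_{C(Y)}$;
\item for every component $E$ we have $\sum_j q_jm_{j,E}=N_\Delta(1-\beta_E)$.
\end{enumerate}

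The supports are handled first. By the proof of Lemma~\ref{lem:weil-boundary-two-step}, the pure codimension-one part of the metric blow-up of $D_j$ is cut out by the limiting homogeneous equations $q_{j\ell,\infty}$. The Kuratowski convergence in Lemma~\ref{lem:boundary-support-convergence} then identifies the union of these limits with $\Delta_{C(Y)}$. Part (1) of that lemma gives the inclusion of limits of boundary points into $\Delta_{C(Y)}$. Part (2) shows that every point of $\Delta_{C(Y)}$ is a limit of boundary points. To pass from set-theoretic limits to the zero set of the limiting ideal, I would use uniform Lipschitz bounds (Lemma~\ref{lem:local-holomorphic-estimates}) on the rescaled $[f_{j\ell}]_\alpha$. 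These bounds give locally uniform convergence, and Hurwitz-type reasoning then shows that the zero loci converge.

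Next I would compute the multiplicities at a generic point. Fix a generic point $q\in E$ lying in $\Delta_{C(Y)}^{reg}\cap S'(C(Y))$ and avoiding the other components; such points exist by density in Lemma~\ref{lem:boundary-support-convergence}. Take a small holomorphic transverse disk $T$ through $q$, with $\partial T$ in the regular set. On $T$, the multiplicity $m_{j,E}$ equals the number of intersection points of the rescaled $D_j$ with approximating disks $T_\alpha$, counted with multiplicity. This count can be read either as a winding number of the leading equation (a Rouch\'e argument) or through Poincar\'e--Lelong applied to $\log|f|$ of a local equation. The latter is legitimate because at generic points of $D_j$ the pieces are Cartier and the rescaled functions converge uniformly near $\partial T$. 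I would then run the holonomy computation from the proof of Lemma~\ref{lem:tubular-trapping} on $T_\alpha\subset(X_\infty,r_\alpha^{-2}\omega_\infty)$. Here $X_\infty$ itself carries a conical metric with angles $\beta_j$ along the $D_j$, by Lemma~\ref{lem:local-structure}, and the curvature term is $O(r_\alpha^2)$ by the Chern--Levine--Nirenberg area bound. This gives
\[
\exp\Bigl(2\pi\sqrt{-1}\sum_j m_{j,E}(1-\beta_j)\Bigr)\longrightarrow \exp\bigl(2\pi\sqrt{-1}(1-\beta_E)\bigr).
\]
On the regular set near $\partial T$ the convergence is smooth by Anderson's theorem, so the holonomies converge.

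The main obstacle is the branch ambiguity: holonomy only determines $\sum_j m_{j,E}(1-\beta_j)$ modulo $\mathbb Z$. To fix the branch I would instead compare volume densities. By Lemma~\ref{lem:tubular-trapping}, near $q$ the rescaled volume forms have the form $|U|^2\prod_j|f_j|^{2\beta_j-2}dV$ with $U$ bounded above and below. Taking Bedford--Taylor limits as in Lemma~\ref{lem:local-structure}, the limiting density transverse to $E$ is $|u|^{-2\sum_j m_{j,E}(1-\beta_j)}$, because the roots merge to $u=0$ with total multiplicities $m_{j,E}$. Comparing with the metric model $|u|^{2\beta_E-2}$ of the structure theorem yields the equality of exponents exactly, not merely modulo $\mathbb Z$; the local integrability of the limiting density, which follows from the non-collapsing volume bound, gives the same constraint. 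Multiplying by $N_\Delta$ gives $\sum_j q_jm_{j,E}=N_\Delta(1-\beta_E)$. Combined with the support equality, this proves $\Gamma_C^{\mathrm{alg}}=N_\Delta\Delta_C^{\mathrm{met}}$ for every Chow-convergent subsequence.
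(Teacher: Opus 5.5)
Your proposal is correct and follows essentially the paper's route: identify $\Gamma_C^{\mathrm{alg}}$ with the $q_j$-weighted blow-up cycle via Lemma~\ref{lem:weil-boundary-two-step}, match supports using the Kuratowski convergence of Lemma~\ref{lem:boundary-support-convergence}, and match coefficients at a generic point of each component $E$. The coefficient match compares the transverse intersection count with the exponent of the limiting Monge--Amp\`ere density, i.e.\ the total angle defect of the branches collapsing to $E$. The holonomy step and the Hurwitz argument are both unnecessary: the density comparison alone fixes the exponent exactly (this is how the paper concludes), and Lemma~\ref{lem:weil-boundary-two-step} already identifies the codimension-one blow-up with the zero locus of the limiting ideal (Hurwitz would not, in any case, transfer directly to ideals with several generators).
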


\begin{proof}
Let $\Gamma_C^{\mathrm{br}}$ denote the codimension-one cycle-theoretic blow-up
limit of $\Gamma_{X_\infty}=N_\Delta\Delta_{X_\infty}$ along the chosen
tangent-cone subsequence. Thus $\Gamma_C^{\mathrm{br}}$ is obtained by taking
the local analytic limits of the rescaled divisor cycles and retaining their
pure codimension-one part.

The adapted-basis comparison gives
$\Gamma_C^{\mathrm{alg}}=\Gamma_C^{\mathrm{br}}$. Indeed, for each prime
component $D_j$ with ideal $\mathfrak p_j\subset A=\mathcal O_{X_\infty,p}$, the
first degeneration is
$\operatorname{Spec}(\operatorname{gr}_\nu A/\operatorname{in}_\nu\mathfrak p_j)$.
The adapted-basis maps $g_\alpha$ are the change-of-adapted-basis maps, so the equations
cutting out $g_\alpha D_{j,W}$ are exactly the leading adapted equations for
the rescaled analytic germ $D_j$. Hence the Chow limit of $g_\alpha D_{j,W}$ is
the codimension-one blow-up limit of $D_j$. Taking the weighted sum over $j$
gives $\Gamma_C^{\mathrm{alg}}=\Gamma_C^{\mathrm{br}}$.

It remains to identify $\Gamma_C^{\mathrm{br}}$ with
$N_\Delta\Delta_C^{\mathrm{met}}$. By
Lemma~\ref{lem:boundary-support-convergence}, the supports of the rescaled
$\Delta_{X_\infty}$ converge locally in the Kuratowski sense to
$\Delta_C^{\mathrm{met}}$. Therefore
$\operatorname{Supp}\Gamma_C^{\mathrm{br}}=\operatorname{Supp}\Delta_C^{\mathrm{met}}$.

Now compare coefficients. Let $E$ be an irreducible component of this common
support, and choose a generic point
$q\in E^{reg}\cap C(Y)^{reg}\cap(S_2(C(Y))\setminus S_4(C(Y)))$
away from all other components. Choose local coordinates
$(u,v_1,\dots,v_{n-1})$ near $q$ with $E=\{u=0\}$, and choose a small transverse
disk $T=\{v=v_0,\ |u|<\epsilon\}$ meeting $E$ once and no other component.

By the structure theorem, the metric boundary coefficient $b_E$ is characterized
by
$\omega_C^n=e^G|u|^{-2b_E}dV$ near $q$, with $G$ bounded. Equivalently, in the
approximating rescaled pairs, if the branches of
$\Delta_{X_\infty}$ meeting the corresponding disks $T_\alpha$ converge to
$E$, and if these branches come from components $D_{j_\ell}$ with coefficients
$a_{j_\ell}$ and transverse multiplicities $m_\ell$, then the local
Monge--Amp\`ere equation gives
$b_E=\sum_\ell a_{j_\ell}m_\ell$.
This is the same coefficient calculation as in the tubular trapping/holonomy
lemma: the exponent of the limiting factor $|u|^{-2b_E}$ is the total angle
defect of the branches collapsing to $E$.

On the other hand, the coefficient of $\Gamma_C^{\mathrm{br}}$ along $E$ is
computed by intersection with the same generic transverse disk $T$. Since
$\Gamma_{X_\infty}=\sum_j q_jD_j$ with $q_j=N_\Delta a_j$, the intersections
with $T_\alpha$ have total multiplicity
$\sum_\ell q_{j_\ell}m_\ell$. Passing to the cycle limit gives
$\operatorname{coeff}_E(\Gamma_C^{\mathrm{br}})
=\sum_\ell q_{j_\ell}m_\ell
=N_\Delta\sum_\ell a_{j_\ell}m_\ell
=N_\Delta b_E$.

Thus every component $E$ has the same coefficient in
$\Gamma_C^{\mathrm{br}}$ and $N_\Delta\Delta_C^{\mathrm{met}}$. Hence
$\Gamma_C^{\mathrm{br}}=N_\Delta\Delta_C^{\mathrm{met}}$. Since
$\Gamma_C^{\mathrm{alg}}=\Gamma_C^{\mathrm{br}}$, the theorem follows.
\end{proof}
For the local slice argument below we work with the reduced projective orbit
closure $P_W:=\overline{G_\xi\cdot x_W}_{\mathrm{red}}$; no nonreduced scheme
structure is used in this step.
We now record the Donaldson--Sun equivariant projective-slice argument in the
form needed for uniqueness of the tangent cone; see the proof of
\cite[Theorem~1.3]{DSII}.
\begin{lemma}[Local orbit rigidity from an equivariant projective slice]
Let $G$ be a complex reductive group acting on a projective variety $P$, and let
$Z\subset P$ be a connected compact subset. Suppose
$Z\subset\overline{G\cdot x}$ for some fixed point $x\in P$. Let $o\in Z$ such
that
\begin{itemize}
    \item the orbit $G\cdot o$ is closed;
    \item $H=\operatorname{Stab}_G(o)$ is reductive, and
    $\dim H$ is minimal among points in $Z$;
    \item an $H$-invariant equivariant projective slice $S$ exists at $o$, as in
    the proof of \cite[Theorem~1.3]{DSII};
    \item if $z\in Z$ is sufficiently close to $o$ and $g\cdot z=s\in S$, then
    $o\in\overline{H\cdot s}$.
\end{itemize}
Then every $z\in Z$ sufficiently close to $o$ lies in $G\cdot o$.
\end{lemma}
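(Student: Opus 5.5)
The argument is the Luna-slice argument from the proof of \cite[Theorem~1.3]{DSII}, stated abstractly. Fix $z\in Z$ close to $o$. Since $z\in\overline{G\cdot x}$ and $Z$ is connected and compact, we first use the slice to move $z$ into $S$. Concretely, the equivariant projective slice $S$ at $o$ comes with an étale-type map $G\times_H S\to P$ whose image contains a $G$-saturated neighborhood $U$ of $G\cdot o$. This is Luna's étale slice theorem, applicable because $G\cdot o$ is closed and $H$ is reductive; in the projective setting it is obtained after embedding $P$ equivariantly and passing to the affine cone, as in Donaldson--Sun. Thus for $z$ sufficiently close to $o$ there is $g\in G$, close to the identity, with $s:=g\cdot z\in S$ close to $o$. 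By the fourth hypothesis, $o\in\overline{H\cdot s}$.

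Next we compare stabilizer dimensions. Upper semicontinuity of $z\mapsto\dim\operatorname{Stab}_G(z)$ gives $\dim\operatorname{Stab}_G(z)\le\dim H$ for $z$ near $o$. Minimality of $\dim H$ on $Z$ gives the reverse inequality, hence equality. Since $s=g\cdot z$, we have $\dim\operatorname{Stab}_G(s)=\dim H$. Now $\operatorname{Stab}_G(s)\cap H=\operatorname{Stab}_H(s)$, and for points of the slice, $\operatorname{Stab}_G(s)\subset H$. This containment is the standard slice property: $G\times_H S\to P$ is injective on stabilizers near $o$. Therefore $\dim\operatorname{Stab}_H(s)=\dim H$, so the orbit $H\cdot s$ has dimension zero. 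Because $H$ is reductive, it has finitely many components, and $H\cdot s$ is therefore a finite set, in particular closed.

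Combining this with $o\in\overline{H\cdot s}=H\cdot s$ gives $o=h\cdot s$ for some $h\in H$. Hence $z=g^{-1}h^{-1}\cdot o\in G\cdot o$, which is the claim.

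The main obstacle is the slice step: producing, from the hypothesis ``an $H$-invariant projective slice exists as in \cite{DSII}'', both the surjectivity of $G\times_H S$ onto a neighborhood of $o$ and the containment of stabilizers $\operatorname{Stab}_G(s)\subset H$ for $s\in S$ near $o$. I would extract both from Luna's étale slice theorem. This requires working on an affine $G$-stable neighborhood, obtained either by linearizing the $G$-action on an ample bundle on $P$ and restricting to the affine cone over the semistable locus, or by using that $G\cdot o$ is closed. Properties are then transferred back to $P$ by continuity. Everything else is semicontinuity and reductivity bookkeeping.
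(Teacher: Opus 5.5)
Your argument is correct, provided the slice $S$ has the standard Luna property $\operatorname{Stab}_G(s)\subset H$ for $s\in S$ near $o$. It finishes differently from the paper's proof.

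Both proofs start the same way: move $z$ into the slice, $s=g\cdot z\in S$, and use the fourth hypothesis to get $o\in\overline{H\cdot s}$.

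The paper then finishes in one step. Since $\overline{H\cdot s}\subset\overline{G\cdot z}$, if $z\notin G\cdot o$ then $G\cdot o$ lies in the boundary $\overline{G\cdot z}\setminus G\cdot z$. That boundary consists of orbits of strictly smaller dimension, so $\dim\operatorname{Stab}_G(o)>\dim\operatorname{Stab}_G(z)$, contradicting the minimality of $\dim H$ on $Z$.

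You instead use upper semicontinuity to get $\dim\operatorname{Stab}_G(s)=\dim H$. You then use the containment $\operatorname{Stab}_G(s)\subset H$ to make $H\cdot s$ finite, hence closed, hence containing $o$. Since $H$ fixes $o$, this even gives $s=o$ exactly.

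The cost of your route is that it relies on semicontinuity and, more seriously, on a property of $S$ that is not among the lemma's hypotheses. Your suggested way of securing that property in the projective setting is delicate. The lift of a closed orbit in $P$ to the affine cone need not be closed: if $G$ fixes $o$ but acts on the line over $o$ by a nontrivial character, the lifted orbit is punctured. So Luna's theorem does not apply verbatim there. The paper's route needs only the fourth hypothesis, the minimality assumption, and the orbit-closure dimension drop.

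The containment is dispensable even in your framework. Once you have $\dim\operatorname{Stab}_G(s)=\dim H$ and $o\in\overline{G\cdot s}$, the same dimension-drop fact gives $o\in G\cdot s$ directly. In fact minimality alone, without semicontinuity, already suffices.
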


\begin{proof}
Move $z$ into the slice, say $s=g\cdot z\in S$. By the slice-closure
assumption, $o\in\overline{H\cdot s}\subset\overline{G\cdot z}$. If
$z\notin G\cdot o$, then $G\cdot o$ is a boundary orbit in
$\overline{G\cdot z}$, so $\dim G\cdot o<\dim G\cdot z$. Equivalently,
$\dim\operatorname{Stab}_G(o)>\dim\operatorname{Stab}_G(z)$, contradicting the
choice of $o$ with minimal $G$-stabilizer dimension on $Z$. Therefore
$z\in G\cdot o$.
\end{proof}

\begin{theorem}[Uniqueness of the log tangent cone]
For the fixed point $p\in X_\infty$, the log metric tangent cone
$(C(Y),\Delta_C,\omega_C)$ is unique.
\end{theorem}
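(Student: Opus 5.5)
Our plan is to follow the Donaldson--Sun argument for \cite[Theorem~1.3]{DSII}, now run in the incidence space $\mathcal P$ rather than in $\operatorname{Hilb}_h^T$ alone. Let $Z\subset\mathcal P$ be the set of points $x_C=([C(Y)],[N_\Delta\Delta_C^{\mathrm{met}}])$ obtained from all tangent cones $C(Y)\in\mathcal C_p$, each embedded by an orthonormal homogeneous basis of $E_D(C(Y))$ as in the ambient degeneration lemma. By the rigidity of the holomorphic spectrum and the Hilbert function, every such cone defines a point of the same $\operatorname{Hilb}_h^T$. The Chow degree of the boundary is fixed because it equals the degree of $\Gamma_W$, by Theorem~\ref{thm:alg-met-boundary-agree}. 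Lemma~\ref{lem:ambient-dsii-degeneration} combined with Theorem~\ref{thm:alg-met-boundary-agree} gives $x_C=\lim g_\alpha x_W$, so $Z\subset\overline{G_\xi\cdot x_W}$. We then show $Z$ is compact and connected. Compactness and connectedness of $\mathcal C_p$ are \cite[Lemma~3.2]{DSII}. Continuity of $C(Y)\mapsto x_C$, modulo the compact group $K_\xi=\prod U(V_\chi)$, comes from the grafting lemma for the Hilbert part and from Lemma~\ref{lem:boundary-support-convergence} together with the coefficient computation in Theorem~\ref{thm:alg-met-boundary-agree} for the Chow part. We therefore work with the $K_\xi$-saturation of $Z$.

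Next we verify the hypotheses of the local orbit rigidity lemma at every $o=x_C\in Z$. First, the orbit $G_\xi\cdot o$ is closed and $\operatorname{Stab}(o)$ is reductive. We prove this by a log Kempf--Ness/Matsushima argument. The conical Ricci-flat cone metric on $(C(Y),\Delta_C,\xi)$, which is klt by Theorem~\ref{thm:tangent-cone-log-structure}, is a zero of a moment map for $K_\xi$ on the orbit. Concretely, the embedding by an $L^2$-orthonormal basis makes the Hermitian form $\int_{B}\Phi\Phi^*$ proportional to the identity on each block. Hence $o$ is polystable, and the stabilizer is the complexification of $\operatorname{Stab}_{K_\xi}(o)$. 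Reductivity of $\operatorname{Aut}(C(Y),\Delta_C,\xi)$ also follows from the conic Bando--Mabuchi lemma via the standard Matsushima argument. Second, we choose $o\in Z$ with $\dim\operatorname{Stab}$ minimal, which is possible by upper semicontinuity. Third, the Luna slice $S$ at $o$ exists because $\mathcal P$ is projective and $H$ is reductive; we take an $H$-invariant linearization in a projective embedding of $\mathcal P$.

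The main obstacle is the slice-closure hypothesis: if $z\in Z$ is near $o$ and $g z=s\in S$, then $o\in\overline{H\cdot s}$. For this we adapt \cite[Section~3.3]{DSII}, replacing the volume with the log normalization. Every $z$ near $o$ carries its own conical Ricci-flat metric, hence is itself $K$-polystable with vanishing log Futaki invariant on the hyperplane $H_\Delta$. Moreover, every point of $Z$ is a degeneration of the single point $x_W$. We would argue as follows. The Hilbert--Mumford criterion on $S$ produces a one-parameter subgroup $\lambda\subset H$ with $\lim\lambda(t)s\in H\cdot o$ or not. Vanishing of the log Futaki invariant, together with strict convexity of $V|_{H_\Delta}$ and algebraicity of $\xi_0$, excludes destabilizing directions, so the minimum of the $H$-moment-map norm on $\overline{H s}$ is attained at the polystable point $o$. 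This is the most delicate step, since it uses the boundary: the character $\chi_\Delta$ must be locally constant on $Z$, which follows from the integrality of the weight of $s_\Delta$ and the continuity of $Z$.

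Granted these points, the rigidity lemma shows that $Z\cap G_\xi\cdot o$ is open in $Z$; it is closed by the closedness of the orbit. Connectedness then gives $Z\subset G_\xi\cdot o$. Hence all log cones $(C(Y),\Delta_C,\xi)$ are isomorphic by $T$-equivariant linear maps preserving $\xi$. Finally, the conic uniqueness lemma for Ricci-flat cone metrics identifies $\omega_C$ up to $\operatorname{Aut}(C(Y),\Delta_C,\xi)^0$, which gives uniqueness of the log metric tangent cone.
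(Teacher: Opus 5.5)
Your outline matches the paper's: the incidence space $\mathcal P$, the compact connected set $Z\subset\overline{G_\xi\cdot x_W}$, a point $o$ of minimal stabilizer dimension with closed orbit and reductive stabilizer, a Luna slice, the local orbit rigidity lemma, connectedness, and the conic uniqueness lemma. The gap is in the step you identify as the crux, the slice-closure hypothesis $o\in\overline{H\cdot s}$, and your argument for it is circular. By Kempf--Ness, the minimum of the moment-map norm over $\overline{H\cdot s}$ is attained only on the closed orbit contained in $\overline{H\cdot s}$. Saying that this minimum is attained at $o$ is exactly the claim $o\in\overline{H\cdot s}$. Moreover, you have already asserted that every $z\in Z$ has a closed $G_\xi$-orbit. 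Then $H\cdot s$ is closed in the slice, and $o\in\overline{H\cdot s}$ is equivalent to $s\in H\cdot o$, which is the conclusion of the lemma itself. The inputs you invoke (vanishing log Futaki invariant, strict convexity of $V|_{H_\Delta}$, algebraicity of $\xi_0$, local constancy of $\chi_\Delta$) are properties of each cone separately. None of them says how $s$ and $o$ sit relative to each other in $S$, so they cannot produce this.

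The missing idea is the common source $x_W$, which you mention but never use. Since $o\in\overline{G_\xi\cdot x_W}$, the orbit $G_\xi\cdot x_W$ meets the saturated slice neighbourhood $U\simeq G_\xi\times_H S$ of $G_\xi\cdot o$. Hence some translate of $x_W$ is a point $s_W\in S$. Choose $U$ so that $G_\xi$-orbit closures in $U$ correspond to $H$-orbit closures in $S$ (shrink the slice, or use an analytic slice). Then both $o$ and $s=g\cdot z$ lie in $\overline{H\cdot s_W}$. $H$-invariant regular functions on the affine slice are constant on this closure, so $s$ and $o$ lie in the same fibre of $S\to S/\!/H$. Since $o$ is $H$-fixed, $\{o\}$ is the unique closed orbit in that fibre, and it lies in the closure of every orbit in the fibre; hence $o\in\overline{H\cdot s}$. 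If you genuinely have closed orbits for all $z\in Z$, the same affine-GIT fact gives $G_\xi\cdot z=G_\xi\cdot o$ at once: an orbit closure inside an affine $G_\xi$-stable neighbourhood contains exactly one closed orbit. This is the Donaldson--Sun mechanism the paper cites. It needs an affine $G_\xi$-stable neighbourhood of $G_\xi\cdot o$, which an $H$-invariant linearization does not provide by itself.

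Separately, your derivation that $G_\xi\cdot o$ is closed does not work. The Hermitian form $\int_B\Phi\Phi^*$ equals the identity for the $L^2$-orthonormal embedding of any K\"ahler cone metric, Ricci-flat or not. It is also not a moment map for the action on $\mathcal P$, so it cannot detect polystability. The paper instead appeals to log K-polystability of the conical Ricci-flat cone.
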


\begin{proof}
Let $\mathcal P$ be the Hilbert--Chow incidence space, and let
$\mathcal Z_p \subset \mathcal P$ be the compact connected set of log tangent cone
points
$x_C=([C(Y)],[N_\Delta\Delta_C])$.
Let
$x_W=([W],[N_\Delta\Delta_W])$
and let $G=G_\xi$.

By the Hilbert--Chow degeneration constructed above, every
$x_C\in \mathcal Z_p$ lies in $\overline{G\cdot x_W}$. Choose
$x_C\in \mathcal Z_p$ with minimal stabilizer dimension. By log polystability of
the conical Ricci-flat cone, the orbit $G\cdot x_C$ is closed. By log
Matsushima,
$H=\operatorname{Stab}_G(x_C)=\operatorname{Aut}(C(Y),\Delta_C,\xi)$
is reductive, so Luna's slice theorem applies at $x_C$.

The Donaldson--Sun projective-slice argument \cite[Theorem~1.3]{DSII} then gives local orbit rigidity: all log tangent
cone points sufficiently close to $x_C$ lie in $G\cdot x_C$. Applying the same
argument on the compact connected set $\mathcal Z_p$ shows that all log tangent
cone points lie in a single $G$-orbit. Thus any two tangent cones are
isomorphic as log affine cones with Reeb field.

Finally, by uniqueness of conical Ricci-flat Kähler cone metrics on a fixed log
affine cone $(C,\Delta,\xi)$, the corresponding metric cones are isometric.
Hence the tangent cone at $p$ is unique.
\end{proof}

\section{Special Test Configurations and Stable Degeneration}

The goal of this section is to upgrade the orbit-closure relation
$x_C\in\overline{G_\xi\cdot x_W}$ to a one-parameter degeneration of log Fano
cones by test configurations, and then to compare this construction with the
stable-degeneration framework of Li--Liu--Xu and Li--Wang--Xu.

We first construct the candidate one-parameter degeneration. The following
lemma produces the required one-parameter subgroup. The closedness of
$G_\xi\cdot x_C$ in $\mathcal P$ should be viewed as the finite-dimensional
GIT form of log polystability for the embedded log Fano cone.

\begin{corollary}[One-parameter orbit degeneration]
Assume that the orbit $G_\xi\cdot x_C$ is closed in $\mathcal P$ and that
$x_C\in\overline{G_\xi\cdot x_W}$. Then there exist $g\in G_\xi$ and a
one-parameter subgroup $\lambda:\mathbb C^*\to G_\xi$ such that
$\lim_{t\to0}\lambda(t)\cdot g x_W=x_C$.

Consequently, pulling back the universal Hilbert--Chow incidence family along
the morphism $\mathbb A^1\to \mathcal{P}$ determined by this one-parameter limit
gives a flat one-parameter degeneration of $W$ to $C(Y)$, together with a
relative codimension-one cycle degenerating $N_\Delta\Delta_W$ to
$N_\Delta\Delta_C$.
\end{corollary}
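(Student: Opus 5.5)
The plan is to apply the Hilbert--Mumford criterion in the relative form of Kempf, or equivalently Richardson's version for closed orbits, to the reductive group $G_\xi=\prod_\chi GL(V_\chi)$ acting on the projective $G_\xi$-scheme $\mathcal P$. First I will reduce to an affine situation. Choose a $G_\xi$-linearized very ample line bundle on $\mathcal P$; replacing it by a power, Sumihiro-type equivariant embedding gives a $G_\xi$-equivariant closed embedding $\mathcal P\hookrightarrow\mathbb P(V)$ for a finite-dimensional rational representation $V$. Lift $x_W$ and $x_C$ to nonzero vectors $\hat x_W,\hat x_C$ in the affine cone. The orbit $G_\xi\cdot x_C$ is closed in $\mathcal P$. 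After enlarging $G_\xi$ by the scalar $\mathbb C^*$ acting on $V$ (which acts trivially on $\mathbb P(V)$), the orbit of $\hat x_C$ in $V$ is closed away from $0$. The hypothesis $x_C\in\overline{G_\xi\cdot x_W}$ then says that the closed orbit $\widehat G\cdot\hat x_C$ meets the closure of the cone over $\widehat G\cdot\hat x_W$.

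Next I will invoke the Hilbert--Mumford--Richardson theorem. For a reductive group acting on an affine variety, if the closure of the orbit of $v$ contains a unique closed orbit $O$, then there exist $g$ and a one-parameter subgroup $\lambda$ such that $\lim_{t\to0}\lambda(t)g\cdot v$ exists and lies in $O$. This is Birkes' or Richardson's refinement of Kempf's theorem. Applying it to $\hat x_W$ and then projecting back gives $\lambda,g$ with $\lim_{t\to0}\lambda(t)gx_W=h\,x_C$ for some $h\in G_\xi$. Replacing $\lambda$ by $h^{-1}\lambda h$ and $g$ by $h^{-1}g$ gives exactly $x_C$. Here I must also make sure $\lambda$ lands in $G_\xi$ and not only in $\widehat G$. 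The scalar factor acts trivially on $\mathcal P$, so I can project it away, possibly after passing to a finite cover of the torus, which is harmless for one-parameter subgroups.

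Finally I will construct the degeneration. The orbit map $t\mapsto\lambda(t)gx_W$ extends, by the existence of the limit and the valuative criterion using projectivity of $\mathcal P$, to an equivariant morphism $\mathbb A^1\to\mathcal P$ sending $0\mapsto x_C$. Pulling back the universal family of $\operatorname{Hilb}_h^T$ gives a family over $\mathbb A^1$ with fibre $W$ for $t\ne0$ and $C(Y)$ at $t=0$. This family is flat because the universal family is flat. Pulling back the universal Chow family gives a relative cycle whose general fibre is $g\,N_\Delta\Delta_W$ and whose special fibre is $N_\Delta\Delta_C$. Theorem~\ref{thm:alg-met-boundary-agree} identifies this special fibre with the metric boundary.

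I expect the main obstacle to be the step "closed orbit in $\mathbb P(V)$ implies closed orbit of the lift". The projective orbit may be closed while the affine lift degenerates to $0$. The enlargement by scalars addresses this: closedness of $\widehat G\hat x_C$ in $V\setminus\{0\}$ is exactly projective closedness. I would then apply Richardson's theorem on $V\setminus\{0\}$, using the fact that $0\notin\overline{\widehat G\hat x_W}$ because the relevant orbit closure of $x_W$ meets a non-null closed orbit. A second subtlety is that the Chow factor is only a scheme up to seminormalization. For this I would work on $P_W$ with reduced structure, as the paper already does.
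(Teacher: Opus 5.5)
Your last paragraph, which extends $t\mapsto\lambda(t)gx_W$ to $\mathbb A^1\to\mathcal P$ and pulls back the universal Hilbert and Chow families, matches the paper. The paper obtains $\lambda$ and $g$ simply by quoting the Hilbert--Mumford--Birkes orbit-closure theorem on $\mathcal P$. Your attempt to justify that step through the affine cone does not work.

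\textbf{Where the cone argument fails.} Once you adjoin the scalars, $s\hat x_W\to0$ as $s\to0$. So $0\in\overline{\widehat G\cdot\hat x_W}$ always, and $\{0\}$ is the unique closed $\widehat G$-orbit in that closure. The cone $\widehat G\cdot\hat x_C$ is closed only in $V\setminus\{0\}$, not in $V$. Richardson's theorem applied to $\hat x_W$ therefore only gives a one-parameter subgroup sending $\hat x_W$ to the origin. That gives no control over the projective limit $\lim\lambda(t)[\hat x_W]$, and in particular says nothing about $x_C$. Your proposed repair fails on both counts:
\begin{enumerate}
\item the claim $0\notin\overline{\widehat G\hat x_W}$ is false, for the reason just given;
\item Richardson, Birkes and Kempf are theorems about affine varieties, whereas $V\setminus\{0\}$ is only quasi-affine.
\end{enumerate}

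\textbf{What an affine reduction would need.} You need a $G_\xi$-stable affine open neighbourhood of $x_C$ in $\mathcal P$. For instance, take $\mathcal P_s=\{s\neq0\}$ for a $G_\xi$-invariant section $s$ of a linearized ample bundle with $s(x_C)\neq0$. Then $s(x_W)\neq0$ automatically, since if $s$ vanished at $x_W$ it would vanish on $\overline{G_\xi x_W}\ni x_C$. Also $G_\xi\cdot x_C$ is closed in $\mathcal P_s$, and Richardson's theorem in $\mathcal P_s$ gives $\lambda$.

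\textbf{Why the stated hypothesis does not supply this.} The existence of such an $s$ is GIT semistability of $x_C$. The hypothesis that $G_\xi\cdot x_C$ is closed in the projective $\mathcal P$ does not provide it.
\begin{enumerate}
\item Such an orbit is complete, hence of the form $G_\xi/P$ with $P$ parabolic.
\item If $x_C$ is not $G_\xi$-fixed, then $P$ acts on the line $\mathbb C\hat x_C$ by a nontrivial character. Otherwise $G_\xi\cdot\hat x_C\cong G_\xi/P$ would be a positive-dimensional complete subvariety of the affine space $V$.
\item Consequently some one-parameter subgroup of $P$ drives $\hat x_C$ to $0$, so $x_C$ is unstable and no such $s$ exists.
\end{enumerate}

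To complete your argument you must either read the closedness hypothesis in the GIT sense, or give a genuinely projective orbit-closure argument. The GIT reading means the orbit is closed in the semistable locus, which is what the polystability heuristic is meant to supply. As written, the linearization step does not produce $\lambda$ and $g$.
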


\begin{proof}
By assumption, $G_\xi\cdot x_C$ is the closed orbit contained in
$\overline{G_\xi\cdot x_W}$. The Hilbert--Mumford--Birkes orbit-closure theorem
therefore gives $g\in G_\xi$ and a one-parameter subgroup
$\lambda:\mathbb C^*\to G_\xi$ such that
$\lim_{t\to0}\lambda(t)\cdot g x_W\in G_\xi\cdot x_C$. Composing $g$ with a
suitable element of $G_\xi$, we may arrange that the limit is exactly $x_C$.

The map $t\mapsto \lambda(t)\cdot g x_W$ for $t\neq0$ extends to a morphism
$\mathbb A^1\to \mathcal P$ by the existence of the above limit. Pulling back the
universal Hilbert family gives a flat family of affine schemes. Pulling back
the universal Chow cycle gives the corresponding relative codimension-one
cycle. The general fiber is $(gW,gN_\Delta\Delta_W)$ and the central fiber is
$(C(Y),N_\Delta\Delta_C)$.
\end{proof}

There is a natural way to refine the Hilbert--Chow construction so that the
boundary itself is retained as a divisor in the family. We briefly explain this
construction, and isolate the additional compatibility which is needed for the
stable-degeneration interpretation.

Let $\mathcal H$ be the Hilbert locus containing the projective closures of
$W$, $C(Y)$, and their $G_\xi$-translates, and let
$\mathfrak X\to\mathcal H$ be the universal family. After fixing
$N_\Delta$ with $\Gamma_Z:=N_\Delta\Delta_Z$ integral for all pairs under consideration, Koll\'ar's space $\operatorname{KDiv}_{d}(\mathfrak X/\mathcal H)$
parametrizes K-flat relative Mumford divisors of the prescribed degree
\cite{KollarFamiliesDivisors}. Over the normal-fiber locus this space is proper
over $\mathcal H$.

Thus the Hilbert--Chow degeneration constructed above may be lifted, after
restricting to the corresponding curve in the Hilbert space, to a family of
Mumford divisors. Indeed, away from the central point the family is obtained by
translating the fixed pair $(W,\Gamma_W)$, while the central Chow cycle is
$\Gamma_C=N_\Delta\Delta_C$ by
Theorem~\ref{thm:alg-met-boundary-agree}. Since the fibers $W$ and $C(Y)$ are
normal, the resulting KDiv limit recovers the same Weil divisor on the central
fiber.

\begin{remark} This does not by itself imply that the degeneration is log
$\mathbb Q$-Gorenstein. The missing information is carried by the reflexive
log-pluricanonical sheaves $\omega_Z^{[m]}(m\Delta_Z)$. Koll\'ar's hull theory gives a locally closed decomposition of the base on which the corresponding reflexive hulls are flat and commute with base change;
moreover there are locally closed loci on which these hulls are invertible
\cite[Cor.~3.30, Prop.~3.31]{KollarFamiliesGeneralType}. 

However, it is not automatic that the Rees degeneration and the
one-parameter degeneration arising from the metric filtration factor through
one of these loci. One must still control specialization of the
log-canonical reflexive sheaf.
\end{remark}

We therefore make the following compatibility assumption.

\begin{assumption}[Associated-graded log-canonical compatibility]
\label{ass}
For some sufficiently divisible $m$, the filtrations defining the two
Donaldson--Sun degenerations extend to the log-pluricanonical modules and
satisfy
$
\left(
\operatorname{gr}_{\nu}
(\omega_{X_\infty}^{[m]}(m\Delta_\infty)
\right)^{}
\cong
\omega_W^{[m]}(m\Delta_W),
$
and
$
\left(
\operatorname{gr}_{\eta}
\omega_W^{[m]}(m\Delta_W)
\right)^{}
\cong
\omega_{C(Y)}^{[m]}(m\Delta_C).
$
\end{assumption}

Under Assumption~\ref{ass}, $(W,\Delta_W)$ is log
$\mathbb Q$-Gorenstein and the second Donaldson--Sun degeneration is a relative
log $\mathbb Q$-Gorenstein degeneration of pairs.

\begin{lemma}[Relative log $\mathbb Q$-Gorensteinness]
\label{lem:relative-log-qg}
Let $(\mathcal W,\mathcal D)\to\mathbb A^1$ be the one-parameter family obtained from the one-parameter subgroup degeneration. Then $\mathcal W$ is normal and $K_{\mathcal W/\mathbb A^1}+\mathcal D$ is $\mathbb Q$-Cartier.
\end{lemma}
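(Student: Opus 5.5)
The plan is to realize $\mathcal W$ as an equivariant Rees-type family and read off both properties from the two ends of the degeneration, using Assumption~\ref{ass} to control the log-pluricanonical sheaf. Since the one-parameter subgroup $\lambda$ commutes with $T$ and $g x_W$ has constant fibres $gW\cong W$ over $t\neq0$, the family $\mathcal W\to\mathbb A^1$ is isomorphic over $\mathbb C^*$ to $W\times\mathbb C^*$, with central fibre $C(Y)$. Concretely, I would identify its coordinate ring with the Rees algebra $\bigoplus_k t^{-k}F^k R_p\subset R_p[t,t^{-1}]$ of the filtration $F$ on $R_p=\operatorname{gr}_\nu A$ induced by the weights of $\lambda$. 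Its associated graded is $R(C(Y))$, which is exactly the second degeneration $\operatorname{gr}_\eta$ appearing in Assumption~\ref{ass}. Flatness comes from the Hilbert-scheme construction in the preceding corollary.

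\textbf{Normality.} The family $\mathcal W$ is flat over the smooth curve $\mathbb A^1$. Its general fibres are normal by the corollary asserting that $W$ is normal, and its central fibre $C(Y)$ is normal by Theorem~\ref{thm:tangent-cone-log-structure}. I would then apply the standard criterion: a flat family over a regular one-dimensional base whose fibres are normal has normal total space. The argument checks Serre's conditions. The condition $S_2$ lifts from the fibres because $t$ is a nonzerodivisor. For $R_1$, codimension-one points of $\mathcal W$ either lie over $\mathbb C^*$, where $\mathcal W\cong W\times\mathbb C^*$, or are generic points of the central fibre; there $C(Y)$ is regular and $t$ is a local parameter. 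As a consequence the boundary cycle $\mathcal D$, defined as the closure of $\Gamma_W\times\mathbb C^*$ divided by $N_\Delta$, is a Weil divisor on $\mathcal W$. By Theorem~\ref{thm:alg-met-boundary-agree} and the KDiv discussion, its restriction to the central fibre is $\Delta_C$.

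\textbf{$\mathbb Q$-Cartier property.} Fix $m$ as in Assumption~\ref{ass} with $m\Delta$ integral, and consider the reflexive sheaf $\mathcal L:=\omega_{\mathcal W/\mathbb A^1}^{[m]}(m\mathcal D)$. The main step is to show that $\mathcal L|_{C(Y)}$ has reflexive hull $\omega_{C(Y)}^{[m]}(m\Delta_C)$ and that no torsion appears, meaning $\mathcal L$ commutes with restriction to $t=0$. I would obtain this by writing $\mathcal L$ itself as the Rees module of the filtration of $\omega_W^{[m]}(m\Delta_W)$. By Assumption~\ref{ass}, the associated graded of this module is $\omega_{C(Y)}^{[m]}(m\Delta_C)$. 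Since a Rees module is flat with central fibre the associated graded module, this gives the desired base-change property. Now $\omega_{C(Y)}^{[m]}(m\Delta_C)$ is invertible, generated by the section $s_\Delta^{m/N}$ from the proof of Theorem~\ref{thm:tangent-cone-log-structure} after taking $m$ divisible by $N$. A $T$-homogeneous generator of the central fibre then lifts to a section of $\mathcal L$ near $t=0$. By Nakayama's lemma this lift generates $\mathcal L$ on a neighbourhood of the central fibre. Because $\mathcal L$ is $\mathbb C^*$-equivariant and every orbit closure meets the central fibre, that neighbourhood can be taken to be all of $\mathcal W$. This makes $K_{\mathcal W/\mathbb A^1}+\mathcal D$ $\mathbb Q$-Cartier.

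\textbf{Main obstacle.} I expect the hardest point to be the identification of $\mathcal L$ with the Rees module, specifically that the reflexive hull on $\mathcal W$ does not acquire extra sections supported on the central fibre. I would handle this by using normality of $\mathcal W$. Both $\mathcal L$ and the Rees module are reflexive rank-one sheaves, and they agree away from a codimension-two set. That set consists of the union of the singular loci of the fibres together with the singular locus of the boundary, and away from it all sheaves are invertible and the identification is tautological. Reflexivity then forces the two sheaves to agree everywhere.
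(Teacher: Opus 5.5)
Your normality argument is the paper's: the paper cites the criterion that a flat morphism with normal fibres over a normal base has normal total space, and you check $R_1$ and $S_2$ by hand. For the $\mathbb Q$-Cartier statement, however, you take a genuinely different route.

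The paper transports a generator $\tau$ of $\omega_W^{[m]}(m\Delta_W)$ along the product structure over $\mathbb C^*$ and regards it as a rational section of $\mathcal L$. Its divisor has no horizontal part, so it equals $a\,\mathcal W_0$ because $\mathcal W_0=\operatorname{div}(t)$ is prime, and $t^{-a}\tau$ then trivializes $\mathcal L$. This is a class-group argument. It uses only normality of $\mathcal W$ and log $\mathbb Q$-Gorensteinness of the general fibre, and never the central-fibre isomorphism of Assumption~\ref{ass}. It does need the generator on all of $W$; this is available, since an invertible equivariant sheaf on the positively graded cone $W$ is trivial. With a generator only on some $V\subsetneq W$, the transported section can acquire horizontal components coming from $W\setminus V$ that specialize into $\mathcal W_0$.

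Your argument instead feeds in the second isomorphism of Assumption~\ref{ass}. Up to a twist, $\mathcal L$ is the Rees module, which is flat over $\mathbb A^1$ with central fibre $\omega^{[m]}_{C(Y)}(m\Delta_C)$. That sheaf is invertible by Theorem~\ref{thm:tangent-cone-log-structure}, so Nakayama gives invertibility at each point of $\mathcal W_0$. This uses a stronger input, but it is local along the central fibre and yields $\mathcal L|_{\mathcal W_0}\cong\omega^{[m]}_{C(Y)}(m\Delta_C)$ as part of the proof.

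Two steps need repair.

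First, your globalization claim that every $\mathbb C^*$-orbit closure meets the central fibre is false in general. The subgroup $\lambda$ takes values in $G_\xi$ and may have weights of both signs, so $\lambda(s)\cdot x$ need not converge as $s\to0$. Already for $\mathbb C^2$ with its standard Reeb field, $\operatorname{diag}(s,s^{-1})\in G_\xi=GL_2$, and in the product configuration the orbit $\{(s,s^{-1},s)\}$ of $(1,1,1)$ is closed and misses $\{t=0\}$.

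Use the Reeb torus instead. The locus $Z$ where $\mathcal L$ is not invertible is closed and $T_{\mathbb C}\times\mathbb C^*$-invariant. Since the Reeb field has positive weights on $\mathbb C^N$, the $T_{\mathbb C}$-orbit closure of any $(x,t)\in\mathcal W$ contains the vertex $(0,t)$. The $\mathbb C^*$-orbit closure of $(0,t)$ contains $(0,0)\in\mathcal W_0$, so $Z\cap\mathcal W_0=\emptyset$ forces $Z=\emptyset$. Equivalently, invertibility over $\mathcal W\setminus\mathcal W_0\cong W\times\mathbb C^*$ is just log $\mathbb Q$-Gorensteinness of $W$, which also follows from Assumption~\ref{ass}.

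Second, the identification in your ``main obstacle'' paragraph is not tautological at the generic point of $\mathcal W_0$. There $\mathcal L$ and the Rees module $\mathcal M$ are two invertible lattices over a DVR with uniformizer $t$ that agree after inverting $t$. Reflexivity therefore gives only $\mathcal L=t^a\mathcal M$ for some $a\in\mathbb Z$. This is harmless, because $\mathcal L\cong\mathcal M$ and $\mathcal L/t\mathcal L\cong\mathcal M/t\mathcal M$. The reflexivity of $\mathcal M$ that you need follows from your Nakayama step. If the filtration in Assumption~\ref{ass} is taken to be $F^k=\{s:t^{-k}s\in H^0(\mathcal W,\mathcal L)\}$, as is natural, then $\mathcal L$ is the Rees module on the nose and no codimension-two comparison is needed.
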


\begin{proof}
Flatness follows because $\mathcal W\to\mathbb A^1$ is pulled back from the universal Hilbert family. The nonzero fibers are isomorphic to the normal variety $W$, and the central fiber is the normal variety $C(Y)$. By Stacks Project, More on Morphisms, Definition 37.20.1 and Lemma 37.20.2, the morphism $\mathcal W\to\mathbb A^1$ is normal: it is flat and all geometric fibers are normal. Since $\mathbb A^1$ is normal, EGA IV, Proposition 6.8.1 implies that $\mathcal W$ is normal. Since the base $\mathbb A^1$ is normal and the morphism is flat with geometrically normal fibers, the total space $\mathcal W$ is normal.

It remains to show that $m(K_{\mathcal W/\mathbb A^1}+\mathcal D)$ is Cartier. This is local on $\mathcal W$. Over $\mathbb A^*$, the family is equivariantly isomorphic to $(W,\Delta_W)\times\mathbb A^*$. Hence $\omega_{\mathcal W/\mathbb A^1}^{[m]}(m\mathcal D)$ is locally free over $\mathbb A^*$.

Let $V$ be an open subset of $W$ on which $\omega_W^{[m]}(m\Delta_W)$ is generated by a nowhere-vanishing section $\tau_V$. Transport $\tau_V$ along the product trivialization over $\mathbb A^*$, and view the result as a rational section $\tau_V^*$ of $\omega_{\mathcal W/\mathbb A^1}^{[m]}(m\mathcal D)$ on the closure of $V\times\mathbb A^*$ in $\mathcal W$. Since $\tau_V^*$ is nowhere vanishing on every noncentral fiber, its divisor has no horizontal component. Here “nowhere-vanishing” means as a section of the log pluricanonical sheaf; viewed as an ordinary pluricanonical form, $\tau_V$ may have poles along
$\Delta_W$. The only possible vertical codimension-one component is the central fiber $\mathcal W_0$. Therefore $\operatorname{div}(\tau_V^*)=a_V\mathcal W_0$ for some integer $a_V$.

Since $\mathcal W_0=\operatorname{div}(t)$ is principal, the section $t^{-a_V}\tau_V^*$ has zero divisor. On a normal variety, a rational section of a rank-one reflexive sheaf with zero divisor gives a local trivialization. Thus $\omega_{\mathcal W/\mathbb A^1}^{[m]}(m\mathcal D)$ is locally free near the closure of $V\times\mathbb A^*$. Such opens cover a neighborhood of the central fiber by the one-parameter degeneration. Therefore $m(K_{\mathcal W/\mathbb A^1}+\mathcal D)$ is Cartier.
\end{proof}

\begin{theorem}[Special test configuration]
The family $(\mathcal W,\mathcal D,\xi;\eta)\to\mathbb A^1$ is a
$T$-equivariant special test configuration of the log Fano cone
$(W,\Delta_W,\xi)$ with central fiber $(C(Y),\Delta_C,\xi)$.

Moreover, $(W,\Delta_W,\xi)$ is K-semistable, and
$(C(Y),\Delta_C,\xi)$ is its K-polystable, hence stable, degeneration in the
sense of Li--Wang--Xu.
\end{theorem}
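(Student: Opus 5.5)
We will check, in order, the defining properties of a special test configuration of a log Fano cone and then compare with the Li--Wang--Xu framework. The family $(\mathcal W,\mathcal D)\to\mathbb A^1$ is the one produced by the one-parameter orbit degeneration corollary: it is pulled back from the universal Hilbert--Chow incidence family along $t\mapsto\lambda(t)\cdot gx_W$. Since $\lambda$ takes values in $G_\xi=\prod_\chi GL(V_\chi)$, which commutes with $T$, the $T$-action and the Reeb field $\xi$ extend fibrewise. The $\mathbb C^*$-action $\eta$ generated by $\lambda$ commutes with $T$, so the family is $T\times\mathbb C^*$-equivariant. Over $\mathbb A^1\setminus\{0\}$ it is equivariantly the product $(W,\Delta_W)\times\mathbb A^*$. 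Lemma~\ref{lem:relative-log-qg} gives normality of $\mathcal W$ and that $K_{\mathcal W/\mathbb A^1}+\mathcal D$ is $\mathbb Q$-Cartier. By Theorem~\ref{thm:alg-met-boundary-agree} and the KDiv discussion, the central fibre is the pair $(C(Y),\Delta_C)$. For specialness it remains that this central fibre is a klt log Fano cone with $\xi$ in its Reeb cone. Klt-ness follows from the end of the proof of Theorem~\ref{thm:tangent-cone-log-structure}, where the log-canonical measure of $s_\Delta$ agrees with the finite metric volume measure. The Reeb condition holds because $\xi$ has positive weights on all coordinates of $\mathbb C^N$. Under Assumption~\ref{ass}, the log-pluricanonical generator $s_\Delta$ is the central limit of the corresponding generator on $W$, so the normalization $\chi_\Delta(\xi)=nN$ is preserved along the family.

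For K-semistability of $(W,\Delta_W,\xi)$ and the identification of the degeneration, we use the standard Li--Wang--Xu criterion. If a log Fano cone admits a special test configuration whose central fibre is K-polystable with the induced Reeb field, then the original cone is K-semistable, and the central fibre is its unique polystable degeneration. The polystability of $(C(Y),\Delta_C,\xi)$ comes from the existence of the weak conical Ricci-flat cone metric, Theorem~\ref{thm:tangent-cone-log-structure}(3). This passes through the log version of Collins--Sz\'ekelyhidi / Li--Wang--Xu: a Ricci-flat cone metric implies K-polystability, here in the conical form via the log Ding functional $D_\Delta$. Its convexity and the log Futaki vanishing were recorded before. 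The log Futaki invariant of the test configuration $\eta$ equals $\mathrm{Fut}_{(C(Y),\Delta_C,\xi)}(\eta)$ on the central fibre. It vanishes by the log Futaki variation lemma, because $\xi$ is a critical point of $V|_{H_\Delta}$ on the central fibre, where the action of $\eta$ is contained in $\operatorname{Aut}(C(Y),\Delta_C,\xi)$. Semicontinuity of the normalized volume then gives $\widehat{\mathrm{vol}}(\xi;W,\Delta_W)=\widehat{\mathrm{vol}}(\xi;C(Y),\Delta_C)$. Minimality on $C(Y)$ gives K-semistability of $W$.

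Finally, to match ``stable degeneration'' we still need one identification. The valuation $\nu$ from the metric order is a minimizer of $\widehat{\mathrm{vol}}$ on $(X_\infty,\Delta_\infty,p)$, and $W$ is its associated graded object. We would argue this by noting that the volume of $\nu$ equals $V(\xi)$, by the Hilbert function equality from the rigidity theorem. Any valuation of smaller normalized volume would, via Assumption~\ref{ass} and the Li--Xu degeneration, produce a K-semistable cone of smaller volume degenerating to the same polystable cone, which is a contradiction. Uniqueness of the polystable degeneration (Li--Wang--Xu) then identifies $(C(Y),\Delta_C,\xi)$. The main obstacle is the polystability step in the conical, singular-cone setting. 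I would first try to reduce it to \cite{DatarSzekelyhidi}-type properness of the log Ding functional together with reductivity of $\operatorname{Aut}(C(Y),\Delta_C,\xi)$. Failing that, I would approximate the divisor coefficients and invoke the smooth-boundary case.
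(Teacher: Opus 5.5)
Your proposal follows the paper's route: you check the special-test-configuration axioms via the orbit-degeneration corollary, Lemma~\ref{lem:relative-log-qg}, Theorem~\ref{thm:alg-met-boundary-agree} and the klt property of the cone; you get K-polystability of $(C(Y),\Delta_C,\xi)$ from its conical Ricci-flat cone metric (the paper simply cites \cite[Theorem~2.9]{ChiLiWeightedSolitons}, so no properness or coefficient-approximation argument is needed); you deduce K-semistability of $(W,\Delta_W,\xi)$ from the special-degeneration criterion (the paper cites \cite[Proposition~5.5]{LLXGuidedTour}); and you conclude by Li--Wang--Xu uniqueness of the K-polystable degeneration. Your extra steps carry no weight and should be dropped rather than repaired, for three reasons: (i) Futaki vanishing is not needed, and criticality of $V|_{H_\Delta}$ would not give it anyway, since that only controls directions in $\mathfrak t_\xi$, while $\eta$ lies outside $\mathfrak t_\xi\otimes\mathbb C$ whenever $(W,\Delta_W)\not\cong(C(Y),\Delta_C)$; (ii) in the volume comparison, the equality $\widehat{\mathrm{vol}}(\xi;W,\Delta_W)=\widehat{\mathrm{vol}}(\xi;C(Y),\Delta_C)$ comes from flatness and the preserved log-canonical weight, and semicontinuity is what supplies $\widehat{\mathrm{vol}}(o,W)\geq\widehat{\mathrm{vol}}(o,C(Y))$; (iii) your third paragraph is not part of the statement, which concerns the polystable degeneration of $(W,\Delta_W,\xi)$, and as written it is circular, because it assumes a competing minimizer degenerates to the same polystable cone.
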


\begin{proof}
We first verify the definition of a special test configuration of a log Fano
cone.

The family $\mathcal W\to\mathbb A^1$ is affine and flat, since it is obtained
from the one-parameter orbit closure in the affine Hilbert scheme, together with
the pullback of the universal family. Its general fiber is isomorphic to $W$,
and its central fiber is $C(Y)$. The relative boundary $\mathcal D$ is the
cycle-theoretic closure of the moving boundary over $\mathbb A^*$; it contains
no fiber component, its general fiber is $\Delta_W$, and by the boundary
comparison theorem its central fiber is $\Delta_C$.

The one-parameter subgroup $\lambda:\mathbb C^*\to G_\xi$ gives a
$\mathbb C^*$-action on $(\mathcal W,\mathcal D)$ covering the standard action
on $\mathbb A^1$. With the sign convention of Li--Wang--Xu
\cite[Definition~2.23]{LiWangXu}, the generating
vector field is denoted $\eta$ and satisfies $\pi_*\eta=-t\partial_t$. Since
$\lambda\subset G_\xi$ preserves the $T$-weight decomposition, this
$\mathbb C^*$-action commutes with the fiberwise $T$-action. Hence the Reeb
field $\xi$ extends over the family.

By the normality lemma, $\mathcal W$ is normal. By the relative log
$\mathbb Q$-Gorenstein lemma, $K_{\mathcal W/\mathbb A^1}+\mathcal D$ is
$\mathbb Q$-Cartier. Finally, the central fiber $(C(Y),\Delta_C)$ is klt by the
cone-side structure theorem. Therefore $(\mathcal W,\mathcal D,\xi;\eta)$ is a special test configuration
in the sense of Li--Wang--Xu \cite[Definition~2.23]{LiWangXu}.

We next prove the stability assertions. The central fiber
$(C(Y),\Delta_C,\xi)$ carries the weak conical Ricci-flat K\"ahler cone metric
constructed above. By the local Yau--Tian--Donaldson theorem for log Fano cone
singularities, equivalently the Ricci-flat cone/Ding-polystability theorem for
log Fano cones, $(C(Y),\Delta_C,\xi)$ is K-polystable. In the boundary-free case
this is \cite[Theorem~1.1]{CollinsSzekelyhidi} or
\cite[Corollary~A.4]{LiWangXu}; in the log Fano cone setting we use
\cite[Theorem~2.9]{ChiLiWeightedSolitons}.

Since K-polystability implies K-semistability, the central fiber is
K-semistable. We may therefore apply \cite[Proposition~5.5]{LLXGuidedTour} to
the special degeneration
$(W,\Delta_W,\xi)\rightsquigarrow(C(Y),\Delta_C,\xi)$. This gives that
$(W,\Delta_W,\xi)$ is K-semistable.

Now Li--Wang--Xu's uniqueness theorem for K-polystable degenerations of log
Fano cones applies: a K-semistable log Fano cone admits a special degeneration
to a K-polystable log Fano cone, and that K-polystable central fiber is unique
up to isomorphism. Hence the K-polystable cone
$(C(Y),\Delta_C,\xi)$ obtained above is the stable degeneration of
$(W,\Delta_W,\xi)$.
\end{proof}

\section{Further questions}

We conclude by recording three questions suggested by the points at which the
hypotheses above enter the argument.

First, the comparison with stable degeneration in the final section still
depends on Assumption~\ref{ass}. It would be interesting to know
whether this compatibility is automatic for the metric filtration, or whether
it follows from natural singularity-theoretic assumptions on the pairs
involved. 

A second question concerns cone angles approaching $2\pi$.  A basic input in
the proof of Theorem~3.3 is the uniform density gap coming from the fixed
finite set of cone angles; this allows the divisorial conical locus to be
separated from the higher-codimension metric singularities. This mechanism
degenerates when some $\beta_i\to1$.  In the setting of a single smooth
divisor $D_i\in|-\lambda K_{X_i}|$, Chen--Donaldson--Sun treat this regime in
\cite{CDSIII}. Their second approach
uses the stronger smooth approximation for which the Ricci lower bound
approaches the Einstein lower bound, followed by Ricci flow and the
Tian-Wang pseudolocality theorem. It would be interesting to understand to what
extent these arguments extend to general log pairs, where the boundary need
not be a single smooth divisor, and in particular when several boundary
coefficients tend to zero simultaneously.

Finally, one can ask how essential the polarization hypothesis is for the
algebraic structure of the limit.  Liu--Sz\'ekelyhidi \cite[Theorem~1.1]{SLI} show that a
non-collapsed Gromov--Hausdorff limit of polarized K\"ahler manifolds with a
uniform Ricci lower bound is a normal projective variety. Suppose instead that each $X_i$ is smooth and
projective and that
$\operatorname{Ric}(\omega_i)\geq-C\omega_i$,
$\operatorname{diam}(X_i,\omega_i)\leq D$, and
$\operatorname{Vol}(X_i,\omega_i)\geq v>0$, but that no uniformly controlled
polarization compatible with the metrics is assumed.  Can a genuinely
singular Gromov--Hausdorff limit fail to be homeomorphic to any complex
projective variety?  It would also be interesting to ask whether such a
phenomenon can occur for conical metrics with a fixed cone angle.

\appendix
\section{A lower-Ricci K\"ahler limit with nonunique tangent cones}
\label{app:nonunique-tangent-cones}

The construction is a K\"ahler version of the mechanism in Perelman's positive-Ricci example with nonunique asymptotic cone and its systematic formulation by Colding--Naber \cite{PerelmanNonunique,ColdingNaberTangents}.  In the context of the present paper, it shows that the polarized lower-Ricci framework supplied by Lemma~\ref{lem:polarized-approximation} does not by itself imply the uniqueness conclusion of Theorem~\ref{thm:main-log-ds}.  The K\"ahler--Einstein equation used in the body of the paper is therefore a genuine rigidity input.

\begin{theorem}[Nonuniqueness under a lower Ricci bound]
\label{thm:lower-ricci-nonunique}
There is a sequence of smooth K\"ahler metrics $\omega_j$ on $\mathbf{CP}^2$, all in one fixed integral K\"ahler class, and constants $C,v>0$ such that $\operatorname{Ric}(\omega_j)\geq-C\omega_j$ and $\operatorname{Vol}_{\omega_j}(B_{\omega_j}(p,1))\geq v$ for a fixed point $p\in\mathbf{CP}^2$.  After passing to a subsequence, $(\mathbf{CP}^2,\omega_j,p)$ converges in the pointed Gromov--Hausdorff sense to a compact metric space $(X,d,p)$ whose tangent cone at $p$ is not unique.  In fact the tangent-cone set contains a nontrivial continuous family of pairwise nonisometric K\"ahler cones with the same underlying affine variety $\mathbb C^2$, the same Hopf/Reeb action, and the same volume density.
\end{theorem}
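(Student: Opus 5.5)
We will realize the limit metric as a $U(2)$-invariant Kähler metric on $\mathbf{CP}^2$ that, near $p=[1:0:0]$, is a Calabi-type cone whose angle oscillates slowly in $\log r$, in the spirit of Perelman and Colding--Naber. On $\mathbb C^2\setminus\{0\}$ we use the Calabi ansatz $\omega=i\partial\bar\partial F(s)$ with $s=\log|z|^2$. The metric is then determined by the profile $\tau=F'(s)$, a function of the moment-map variable. In these coordinates the metric splits into a radial/Hopf part of size $\phi(\tau)=F''(s)$ and a transverse part $\tau\,\omega_{FS}$ on $\mathbf{CP}^1$. Ricci lower bounds and volume can be read off explicitly: one has $\operatorname{Ric}$ given in terms of $\phi,\phi',\phi''$ and $\tau$. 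The model cone $\phi(\tau)=c\,\tau$ gives a flat cone over a squashed (Berger) $S^3$, with $c$ the squashing parameter. We will take $c=1$ as the flat $\mathbb C^2$ and $c\in[c_-,c_+]$ as nearby cones. These all have the same affine structure $\mathbb C^2$, the same Hopf Reeb field, and volume density depending on $c$. To keep the density fixed, we will instead vary a second parameter: a one-parameter family of $U(2)$-invariant Kähler cones with fixed Reeb field and fixed volume but varying link metric. Concretely, we perturb $\phi(\tau)=\tau\,h(\log\tau)$ with fixed average of $h$ over a period, which fixes the volume ratio.

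\textbf{Step 1 (the singular limit).} Define $\phi(\tau)=\tau\,(1+\epsilon\sin(\delta\log\tau))$ for small $\tau$, where $\epsilon,\delta$ are small. We glue this to the Fubini--Study profile for $\tau$ near the value corresponding to the class $c_1(\mathcal O(1))$. The gluing is done by interpolating $\phi$ on a compact $\tau$-interval while keeping $\phi>0$ and keeping the boundary behaviour at the maximal $\tau$ that closes up the line at infinity. This fixes the integral class. The scaling $\tau\mapsto\lambda\tau$ acts on tangent cones by shifting the phase of $\sin(\delta\log\tau)$. Because $\delta$ is small, $\phi$ is asymptotically scale-invariant, so blow-ups along $\lambda_k\to0$ with phases $\theta_k\to\theta$ converge to the cone with the constant profile $1+\epsilon\sin\theta$. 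Here we need the family of constant profiles to be honest cones. We will therefore adjust the ansatz so that the oscillation enters a genuinely scale-invariant parameter, namely the Berger squashing of the link, with the volume compensated by a second oscillating term. Different $\theta$ give nonisometric link metrics, which is visible, for instance, from the ratio of the Hopf fibre length to the base diameter. The set of tangent cones is thus the full circle of phases.

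\textbf{Step 2 (curvature and noncollapse).} Using the explicit Ricci formula for the Calabi ansatz, the terms involving $\tau\phi'$ and $\tau^2\phi''$ are $O(\epsilon)$ times the scale-invariant curvature size $\tau^{-1}$. On the flat model the curvature is zero, so we only get $\operatorname{Ric}\ge -C\epsilon\,\tau^{-1}\omega$. This is the main obstacle: a bound by $C\tau^{-1}$ is not uniform. We will first try to fix it by choosing the link deformation inside a family of cones with nonnegative Ricci curvature, namely Berger spheres with squashing below the Einstein value, which still have $\operatorname{Ric}>0$ on the link. With slow oscillation ($\delta\to0$ relative to $\epsilon$) the error terms are $O(\epsilon\delta)\tau^{-1}$ and are dominated by the positive curvature of the cone. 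This is the Perelman mechanism. Noncollapse then follows from Bishop--Gromov once $\operatorname{Ric}\ge-C$, together with a uniform lower bound on the link volume.

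\textbf{Step 3 (smooth approximants).} Finally, we smooth the vertex by replacing $\phi$ on $\tau<1/j$ with the profile $\phi=\tau$ of the flat metric, interpolated over a $\log$-long region. The long interpolation region makes the Ricci error small, and flatness near the origin gives a smooth Kähler metric $\omega_j$ on $\mathbf{CP}^2$. The class is unchanged because the profile is unchanged near the top. The Ricci bound is uniform in $j$, and $\omega_j\to\omega_\infty$ smoothly away from $p$, with uniformly controlled diameters. Hence we get pointed Gromov--Hausdorff convergence to $X$, whose tangent cones at $p$ are the continuous family from Step 1.
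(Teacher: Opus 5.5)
There is a genuine gap. No $U(2)$-invariant (Calabi-ansatz) construction can have nonunique tangent cones, so Step~1 cannot be realized.

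Up to scaling, the only $U(2)$-invariant Kähler cone on $\mathbb C^2$ whose Reeb field generates the Hopf action is $\sqrt{-1}\partial\bar\partial|z|^{2a}$, i.e.\ your profile $\phi=a\tau$. For this cone:
\begin{itemize}
\item the link is the Berger sphere with Hopf fibres scaled by $a$ and base scaled by $\sqrt a$, so the Sasakian condition ties squashing to volume;
\item the Reeb field is $a^{-1}$ times the Hopf generator;
\item the volume density is $a^2$.
\end{itemize}
So the family you ask for, ``with fixed Reeb field and fixed volume but varying link metric'', does not exist: fixing the Reeb field fixes $a$. There is also no second scale-invariant parameter with which to ``compensate'' the volume. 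Since $\operatorname{Ric}\geq -C$ and Bishop--Gromov force all tangent cones at $p$ to have the same density, your circle of squashed cones cannot occur.

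Your own Ricci formula shows this directly. For $\phi=\tau h(u)$ with $u=\log\tau$, the eigenvalue of $\operatorname{Ric}$ on the radial--Hopf line is $-(2h_u+h_{uu})/\tau$. This is \emph{linear} in the deformation speed, because there is no volume-preserving direction. The bound $\operatorname{Ric}\geq -C\omega$ gives $(e^{2u}h_u)_u\leq Ce^{3u}$. Integrating from $u=-\infty$ and using $h>0$ gives $h_u\leq Ce^{u}$. Hence $h$ is essentially monotone, converges as $\tau\to0$, and the tangent cone is unique.

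Step~2 also misplaces the curvature reserve. Every metric cone has $\operatorname{Ric}(\partial_r,\cdot)=0$, and for a Kähler cone the whole radial--Reeb line is Ricci-null. The positivity of a Berger link with $a<1$ lives only on the horizontal line, where $\operatorname{Ric}=2(1-a)\pi^*\omega_0$. The modulation errors land exactly on the radial line, so that positivity cannot absorb them.

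Moreover, with $\delta$ fixed, $\sin(\delta\log\tau)$ is log-periodic. The metric is then discretely self-similar rather than asymptotically conical: blow-ups by the period reproduce the non-conical metric itself. Its $\tau^{-1}$ Ricci errors are scale-invariant, and letting ``$\delta\to 0$'' is not possible within a single metric.

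The missing ideas are those of the paper:
\begin{itemize}
\item Break the symmetry down to the Hopf $U(1)$ and vary the transverse metric on $\mathbf{CP}^1$ inside a fixed Kähler class, $\omega_{a,t}=a\omega_0+t\sqrt{-1}\partial\bar\partial\psi$ with $\psi$ nonconstant. This keeps the Reeb field and contact volume fixed, and the cones are distinguished by the scalar-curvature variance. The radial error then becomes quadratic in the speed.
\item Modulate by $T(\log\sigma)$ with $\sigma=-\log|z|^2$, whose logarithmic speed decays like $\sigma^{-1}$.
\item Add a separate buffer $K\log\sigma$ to the logarithm of the potential. It contributes $2K\sigma^{-2}$ in the radial direction without changing any tangent cone.
\end{itemize}
Your radial cap is then usable only at scales inside flat intervals of $T$, where the metric is again $U(2)$-invariant. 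That is exactly where the paper places its caps.
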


We first record the plan of the construction.  The five steps below correspond to Subsections~\ref{subsec:appendix-riemannian}--\ref{subsec:appendix-compactification}.
\begin{enumerate}
\item We recall the Colding--Naber mechanism.  A volume-preserving deformation of a cone link enters the radial Ricci tensor only quadratically in its speed.  A small radial concavity can therefore pay for a deformation which changes infinitely often but becomes arbitrarily slow at small scales.
\item We construct a path of K\"ahler cones on $\mathbb C^2$ from the Hopf fibration $S^1\to S^3\to\mathbf{CP}^1$.  The transverse K\"ahler metric varies in a fixed cohomology class, so the contact volume is constant.  The cones are pairwise nonisometric after restricting to a sufficiently short parameter interval, even though the affine variety and Reeb field remain fixed.
\item We insert this parameter into the radial variable by means of $T(\log(-s))$, where $s=\log|z|^2$.  The resulting deformation freezes on every blow-up annulus but has several subsequential phases.  The main local obstruction is the radial Ricci error.  We add $K\log(-s)$ to the logarithm of the K\"ahler potential; Lemma~\ref{lem:appendix-slow-modulation} shows that its positive radial contribution dominates the mixed and second-derivative errors.
\item We then realize the singular metric as a Gromov--Hausdorff limit of smooth metrics.  A fixed smooth cap cannot simply be shrunk, since a negative Ricci error would scale like the inverse square of the cap radius.  On radii where the oscillation is frozen at $0$, the metric is radial, and we construct a scale-dependent cap by prescribing the radial Ricci potential and solving one matching equation.
\item Finally we place the local construction in a coordinate ball of $\mathbf{CP}^2$.  A fixed regularized-maximum interpolation joins the common outer part of the local metric to Fubini--Study.  Since this interpolation occurs at a fixed scale, it contributes one fixed Ricci lower-bound constant and leaves the global K\"ahler class unchanged.
\end{enumerate}

\subsection{The Riemannian mechanism}
\label{subsec:appendix-riemannian}

Let $Y^{m-1}$ carry a smooth family $h(\tau)$ with $\operatorname{Ric}(h(\tau))\geq(m-2)h(\tau)$ and pointwise volume form independent of $\tau$.  For $g=dr^2+R(r)^2h(f(r))$, the second fundamental form of the level sets, viewed as an endomorphism, is $A=(R'/R)I+(f'/2)h^{-1}\dot h$.  The volume condition is $\operatorname{tr}_h\dot h=0$, so the part of $A$ which changes the shape of the link is trace-free.  Using $\operatorname{Ric}_{rr}=-\operatorname{tr}(A')-\operatorname{tr}(A^2)$ and differentiating the trace-free identity gives $\operatorname{Ric}_{rr}=-(m-1)R''/R-(f')^2|\dot h|_h^2/4$.  Thus a link deformation costs radial Ricci quadratically in its speed, while a slight concavity $R''<0$ supplies a positive radial reserve.  This is the mechanism in \cite{ColdingNaberTangents}; Perelman's earlier cohomogeneity-one example realizes the same idea by letting the shape of the $S^3$ orbits drift slowly while a separate warping term controls Ricci curvature \cite{PerelmanNonunique}.

The parameter must vary slowly enough to freeze under one blow-up, but it must also fail to converge as $r\to0$.  The choice $f(r)=T(\log|\log r|)$ with $T$ periodic has both properties.  One has $r|f'(r)|=O(|\log r|^{-1})\to0$, and for every fixed $A>1$ the oscillation of $f$ on $A^{-1}r_j<r<Ar_j$ tends to zero as $r_j\to0$.  At the same time $f$ retains the full periodic cluster set along sufficiently separated sequences of scales.  This scale separation is the only ingredient needed later to produce different tangent-cone subsequences.

\subsection{A fixed-volume family of Hopf cone links}
\label{subsec:appendix-hopf}

Let $\pi:\mathbb C^2\setminus\{0\}\to\mathbf{CP}^1$ be the Hopf projection and put $s=\log|z|^2$.  Normalize the Fubini--Study form $\omega_0$ by $\operatorname{Ric}(\omega_0)=2\omega_0$ and $\sqrt{-1}\partial\bar\partial s=\pi^*\omega_0$.  Fix $a\in(0,1)$ and choose a nonconstant $\psi\in C^\infty(\mathbf{CP}^1)$ whose scalar-curvature linearization at $a\omega_0$ is nonzero.  After rescaling $\psi$ we may assume that its $C^4$ norm is as small as needed below.  For $b$ near $a$ and $t\in[0,\varepsilon]$, set $\omega_{b,t}=b\omega_0+t\sqrt{-1}\partial\bar\partial\psi$, $\Phi_{b,t}=\exp(bs+t\psi\circ\pi)$, and $\Omega_{b,t}=\sqrt{-1}\partial\bar\partial\Phi_{b,t}$.

The choice $a<1$ gives a strict horizontal Ricci margin.  Constant rescaling does not change the Ricci form, so $\operatorname{Ric}(b\omega_0)-2b\omega_0=2(1-b)\omega_0$.  After decreasing $\varepsilon$, the size of $\psi$, and the allowed interval for $b-a$, there is $\delta>0$ such that $\omega_{b,t}\geq\delta\omega_0$ and $\operatorname{Ric}(\omega_{b,t})-2\omega_{b,t}\geq4\delta\omega_0$ for all relevant $(b,t)$.

Since $\Phi_{b,t}(\lambda z)=|\lambda|^{2b}\Phi_{b,t}(z)$, the potential is homogeneous.  The corresponding homothetic vector field is a gradient field, and after fixing the cone radius by a constant multiple of $\Phi_{b,t}^{1/2}$ the form $\Omega_{b,t}$ is a K\"ahler cone metric.  Its link is a regular Sasaki metric on the Hopf bundle.  For fixed $b=a$, the Hopf circle and its Reeb field do not depend on $t$, while the transverse K\"ahler form is $\omega_{a,t}$.

We next describe the Ricci tensor of these frozen cones.  Here ``frozen'' means that the transverse parameter is treated as a constant; later, at a given radial scale, the slowly varying metric will be compared with the cone obtained by freezing the value of that parameter and ignoring its radial derivatives.  In a local holomorphic Hopf trivialization, write $\Phi=e^F$, put $D=F_s^2+F_{ss}$, and let $\Sigma$ be the horizontal Schur complement of the Hermitian matrix of $\sqrt{-1}\partial\bar\partial e^F$.  The determinant factors as $\det(\sqrt{-1}\partial\bar\partial e^F)=e^{2F}D\det\Sigma$ up to the harmless holomorphic Jacobian of the fiber coordinate.  When $F=bs+t\psi$, one has $D=b^2$, and the quadratic $t^2|\partial\psi|^2$ terms cancel in the Schur complement, leaving $\Sigma=\omega_{b,t}$.  Applying $-\sqrt{-1}\partial\bar\partial\log\det$ gives $\operatorname{Ric}(\Omega_{b,t})=\pi^*(\operatorname{Ric}(\omega_{b,t})-2\omega_{b,t})$.  The radial--Reeb complex line therefore has zero Ricci curvature, while the horizontal complex line has the uniform positive margin above.

If $\eta_t$ is the normalized contact form on the link of $\Omega_{a,t}$, then $d\eta_t$ is a fixed multiple of $\pi^*\omega_{a,t}$ and the period of $\eta_t$ on a Hopf fiber is independent of $t$.  Fiber integration gives
\begin{equation}\label{eq:appendix-contact-volume}
\int_{S^3}\eta_t\wedge d\eta_t
 =c\int_{\mathbf{CP}^1}\omega_{a,t}
 =ca\int_{\mathbf{CP}^1}\omega_0.
\end{equation}
The link volume is therefore independent of $t$ because $[\omega_{a,t}]=a[\omega_0]$.  This fixed-volume property will play the same role as $\operatorname{tr}_h\dot h=0$ in the Riemannian model, namely it prevents a first-order volume defect from appearing in the radial Ricci calculation.

Note that the cones are nonisometric.  Let $S_t$ denote the scalar curvature of $\omega_{a,t}$ and let $\overline S$ be its average, which is independent of $t$ because the K\"ahler class is fixed.  Our choice of $\psi$ gives $\dot S_0\neq0$, and hence the scalar-curvature variance $\mathcal V(t)=\int_{\mathbf{CP}^1}(S_t-\overline S)^2\omega_{a,t}$ satisfies $\mathcal V(t)=c_0t^2+O(t^3)$ with $c_0>0$.  After decreasing $\varepsilon$, $\mathcal V$ is strictly increasing on $(0,\varepsilon)$, so the cones $\Omega_{a,t}$ are pairwise nonisometric there.  The underlying affine variety $\mathbb C^2$, the holomorphic dilation, the Hopf/Reeb action, and the contact volume remain fixed throughout this family.

\subsection{Slow modulation and the radial Ricci buffer}
\label{subsec:appendix-modulation}

Choose a smooth periodic function $T:\mathbb R\to[0,\varepsilon]$ whose image is all of $[0,\varepsilon]$ and which is constant on a nonempty interval at each endpoint.  Write $\sigma=-s$.  On a sufficiently small punctured ball set
\begin{equation}\label{eq:appendix-main-potential}
F(s,[z])=as+K\log\sigma+T(\log\sigma)\psi([z]),
\qquad
\Omega=\sqrt{-1}\partial\bar\partial e^F.
\end{equation}
Now we explain what the three terms do.
\begin{itemize}
\item The term $as$ gives the homogeneous cone scale $|z|^{2a}$.
\item The term $T(\log\sigma)\psi$ moves through the family constructed above.  If $\tau(s)=T(\log\sigma)$, then $\tau^{(k)}(s)=O(\sigma^{-k})$.  Thus the phase changes infinitely often as $s\to-\infty$, but becomes asymptotically constant on every bounded window in the translated logarithmic variable.
\item The term $K\log\sigma$ supplies radial Ricci curvature.  Its first derivative is $-K/\sigma\to0$, so it does not change a tangent cone, while its second derivative is $-K/\sigma^2$.  In the Ricci form this produces the favorable leading term $2K\sigma^{-2}$ in the radial direction.
\end{itemize}
We now verify that the last term supplies enough radial positivity without disturbing the frozen transverse geometry.

\begin{lemma}[Slow modulation]
\label{lem:appendix-slow-modulation}
After fixing the small family above, one can choose $K$ and then $\sigma_0$ such that $\Omega$ is K\"ahler on $\{\sigma>\sigma_0\}$ and $\operatorname{Ric}(\Omega)\geq0$ there.
\end{lemma}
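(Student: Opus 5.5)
We compute everything in the local Hopf trivialization, using the factorization of $\det(\sqrt{-1}\partial\bar\partial e^F)$ from Subsection~\ref{subsec:appendix-hopf}. Write $F=as+K\log\sigma+\tau(s)\psi$ with $\tau(s)=T(\log\sigma)$, so that $\tau^{(k)}=O(\sigma^{-k})$. The radial derivatives are
\[
F_s=a-K\sigma^{-1}+\tau'\psi, \qquad F_{ss}=-K\sigma^{-2}+\tau''\psi .
\]

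\textbf{Step 1: the determinant.} The radial entry is
\[
D=F_s^2+F_{ss}=a^2-2aK\sigma^{-1}+O(K^2\sigma^{-2})+O(\sigma^{-1}),
\]
which is positive once $\sigma_0\gg K$. The mixed radial--horizontal entries are $F_s\,\partial\psi\,\tau+\tau'\partial\psi$. In the Schur complement, the part proportional to $F_s$ cancels exactly as in the frozen computation. What remains is
\[
\Sigma=\omega_{b(s),\tau(s)}+E,
\]
where $b(s)=F_s-\tau'\psi$ is essentially $a-K\sigma^{-1}$, and $E$ collects terms of size $O(|\tau'|+|\tau''|)=O(\sigma^{-1})$. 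Since $\omega_{b,t}\ge\delta\omega_0$ uniformly, $\Omega$ is K\"ahler for $\sigma>\sigma_0$.

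\textbf{Step 2: the Ricci form.} We have
\[
\operatorname{Ric}(\Omega)=-\sqrt{-1}\partial\bar\partial\bigl(2F+\log D+\log\det\Sigma\bigr).
\]
This splits into three blocks.

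In the \emph{horizontal block}, the frozen part gives
\[
\pi^*\bigl(\operatorname{Ric}(\omega_{b,\tau})-2\omega_{b,\tau}\bigr)\ge4\delta\,\pi^*\omega_0 .
\]
All corrections carry at least one $s$-derivative of $\tau$ or of $K\log\sigma$, so they are $O_K(\sigma^{-1})$. For $\sigma_0$ large they are absorbed by the margin $4\delta$.

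In the \emph{radial block}, the relevant quantity is $-\partial_s^2$ of
\[
2F+\log D+\log\det\Sigma .
\]
From $2F$ we get $-2F_{ss}=2K\sigma^{-2}-2\tau''\psi$. Because $[\omega_{b,\tau}]$ is fixed when $b$ is frozen, the fiber integral of $\log\det\Sigma$ has no first-order variation in $\tau$. Pointwise, the term $-\partial_s^2\log\det\Sigma$ equals a trace term $\operatorname{tr}(\Sigma^{-1}\dot\Sigma)\tau''$ plus a quadratic term $-|\dot\Sigma|^2(\tau')^2$, and the quadratic term has size $O(\sigma^{-2})$ with a constant independent of $K$. This is the Colding--Naber structure: the slow modulation only costs radial Ricci quadratically in its speed.

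\textbf{Step 3: the main obstacle.} The difficulty is the terms linear in $\tau''$, namely $-2\tau''\psi$ and the trace term, both of size $O(\sigma^{-2})$. These do not carry a favorable sign, and they have the same order as the buffer $2K\sigma^{-2}$. My plan is not to seek cancellation. Instead, I bound them by $C\sigma^{-2}$, where $C$ depends only on $T$, $\psi$ and $\delta$, and is independent of $K$. Similarly $\partial_s^2\log D=O(K\sigma^{-3})+O(\sigma^{-2})$, again with the $\sigma^{-2}$ constant independent of $K$. Choosing $K>C$ then makes the radial entry at least $K\sigma^{-2}>0$.

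In the \emph{mixed radial--horizontal block}, the entries are $O(\sigma^{-1})$ and involve one derivative of $\tau$. By Cauchy--Schwarz, positivity of the whole form requires
\[
|\text{mixed}|^2\le(\text{radial})(\text{horizontal}),
\]
that is, $O(\sigma^{-2})\le K\sigma^{-2}\cdot4\delta$. This again holds for $K$ large.

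\textbf{Order of choices.} Fix $\psi$, $\varepsilon$ and $\delta$ first, then $K$, and finally $\sigma_0=\sigma_0(K)$ large enough to swallow all $O_K(\sigma^{-1})$ and $O(K^2\sigma^{-3})$ errors. The step I would check most carefully is the claim that the $\tau''$-linear terms have $K$-independent constants. If this fails, I would instead assume that $T$ is chosen with small $C^2$ norm relative to $\delta$.
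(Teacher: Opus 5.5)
Your proposal is correct and takes essentially the same route as the paper's proof. Both compare with the frozen cone $\Omega_{a,\tau(s)}$ through the determinant factorization and bound the horizontal drift by $O_K(\sigma^{-1})$, the mixed block by a $K$-independent multiple of $\sigma^{-1}$ plus $O_K(\sigma^{-2})$, and the radial remainder from below by $-C(\psi,T)\sigma^{-2}-C_K\sigma^{-3}$; then $K$ is chosen to beat the $K$-independent $\sigma^{-2}$ losses, including the Schur-complement loss, and finally $\sigma_0\gg K$.

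One harmless slip: $-\partial_s^2\log\det\Sigma$ actually equals $-\tau''\operatorname{tr}_\Sigma\sqrt{-1}\partial\bar\partial\psi+(\tau')^2|\sqrt{-1}\partial\bar\partial\psi|_\Sigma^2+O_K(\sigma^{-3})$, so the quadratic term has the favorable sign, and your fiber-integral remark is never used. Since you bound these terms in absolute value, neither point affects the argument.
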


\begin{proof}
We use the determinant factorization from the frozen calculation.  The slowly varying metric is compared at each $s$ with the frozen cone $\Omega_{a,\tau(s)}$.  In a horizontal/radial complex frame, differentiating $-2F-\log D-\log\det\Sigma$ gives
\begin{equation}\label{eq:appendix-ricci-block}
\operatorname{Ric}(\Omega)=
\begin{pmatrix}
H_{\tau}+E_H&E_M\\
E_M^*&2K\sigma^{-2}+E_R
\end{pmatrix},
\qquad
H_{\tau}=\operatorname{Ric}(\omega_{a,\tau})-2\omega_{a,\tau}.
\end{equation}
The terms in this matrix have the following sizes.
\begin{itemize}
\item The frozen horizontal block satisfies $H_\tau\geq4\delta\omega_0$.
\item The drift of the horizontal coefficients gives $|E_H|\leq C_K\sigma^{-1}$.  This term may depend on $K$, but it is absorbed by taking $\sigma_0$ large after $K$ has been fixed.
\item The leading mixed term comes from the angular derivative of $\tau(s)\psi$ and satisfies $|E_M|\leq C_1\|\psi\|_{C^3}\sigma^{-1}+C_K\sigma^{-2}$.  The $K\log\sigma$ term is independent of the base variable, so it does not produce a leading $K\sigma^{-1}$ mixed term.
\item After the explicit buffer $2K\sigma^{-2}$ has been separated, the radial remainder satisfies $E_R\geq-C_2(\psi,T)\sigma^{-2}-C_K\sigma^{-3}$.  The terms of order $\sigma^{-2}$ come from $\tau''$ and $(\tau')^2$; derivatives involving the radial $K/\sigma$ perturbation enter one order lower.
\end{itemize}
For $\sigma$ sufficiently large, the horizontal block is at least $2\delta\omega_0$.  The Schur-complement loss from the mixed block is therefore at most $C_3\|\psi\|_{C^3}^2\sigma^{-2}+C_K\sigma^{-3}$.  The constants multiplying the leading $\sigma^{-2}$ loss are independent of $K$.  We first choose $K$ so that $2K>C_2(\psi,T)+C_3\|\psi\|_{C^3}^2+1$ and then choose $\sigma_0\gg K$ so that every $K$-dependent lower-order term and the horizontal error are absorbed.  The radial Schur complement is then nonnegative.  The same comparison with the positive frozen cones shows that $\Omega>0$ for $\sigma>\sigma_0$.
\end{proof}

The completion of this punctured metric adds a single point $p$.  Indeed $e^F\asymp e^{as}\sigma^K$, and the exponential decay makes both the radial length to $s=-\infty$ and the diameters of the level hypersurfaces tend to zero.

Let $s_j\to-\infty$ and assume $T(\log(-s_j))\to t_*$.  Set $D_j(w)=e^{s_j/2}w$ and $\lambda_j=e^{-as_j}(-s_j)^{-K}$, so the rescaled metric is $\lambda_jD_j^*\Omega$.  On a compact annulus write $\ell=\log|w|^2$.  Then $K\log(1-\ell/(-s_j))\to0$ and $T(\log(-s_j-\ell))\to t_*$ in every $C^k$ norm.  It follows that $\lambda_jD_j^*\Omega$ converges smoothly away from the vertex to $\Omega_{a,t_*}=\sqrt{-1}\partial\bar\partial\exp(a\log|w|^2+t_*\psi([w]))$.  The shrinking diameter of the innermost region upgrades this to pointed Gromov--Hausdorff convergence including the vertex.  Since the cluster set of $T$ is $[0,\varepsilon]$, the tangent-cone set contains the pairwise nonisometric family constructed in Subsection~\ref{subsec:appendix-hopf}.

\subsection{The cap and smooth Gromov--Hausdorff approximants}
\label{subsec:appendix-cap}

Now we seek to realize the completed metric as a Gromov--Hausdorff limit of smooth K\"ahler metrics.  A flat center causes no difficulty by itself.  The issue is joining it to the outer metric at a radius which tends to zero while keeping one Ricci lower bound.  If a fixed transition profile has $\operatorname{Ric}\geq-Cg$ and is shrunk by a factor $\rho$, the corresponding lower-bound constant is of order $C\rho^{-2}$.

Choose $S_j\to-\infty$ in the interior of flat intervals on which $T\equiv0$.  On a collar of $S_j$ the outer metric is radial, with potential $P_*(s)=e^{as}(-s)^K$.  For a radial potential $P=P(s)$ on $\mathbb C^2\setminus\{0\}$, one has $\det g_P=c e^{-2s}P'P''$.  It is convenient to set $G_P(s)=2s-\log(P'P'')$, so $\operatorname{Ric}(\sqrt{-1}\partial\bar\partial P)=\sqrt{-1}\partial\bar\partial G_P$ and $(P'^2)'=2e^{2s-G_P}$.  For $P_*$, direct differentiation gives $G_*'(s)=2(1-a)+2K/(-s)+O(s^{-2})$ and $G_*''(s)=2K/s^2+O(|s|^{-3})$, so both derivatives are positive sufficiently far down the end.

\begin{lemma}[Radial cap]
\label{lem:appendix-cap}
There are smooth strictly plurisubharmonic radial potentials $P_j$ and a constant $C$ independent of $j$ such that $P_j=P_*$ up to an additive constant on a collar of $S_j$, the metric $\sqrt{-1}\partial\bar\partial P_j$ is a positive multiple of the Euclidean metric near the origin, $\operatorname{Ric}(\sqrt{-1}\partial\bar\partial P_j)\geq-C\sqrt{-1}\partial\bar\partial P_j$, and the diameter of the capped region tends to zero.
\end{lemma}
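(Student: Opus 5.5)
We prescribe the Ricci potential rather than the Kähler potential, exploiting the radial formulas recorded just before the statement. For a radial potential $P(s)$ with $P',P''>0$, both $g_P=\sqrt{-1}\partial\bar\partial P$ and $\operatorname{Ric}(g_P)=\sqrt{-1}\partial\bar\partial G_P$ are diagonal in the radial/horizontal splitting. Their eigenvalues, up to the common factor $|z|^{-2}$, are $(P'',P')$ and $(G_P'',G_P')$. Hence $\operatorname{Ric}\geq0$ as soon as $G_P$ is nondecreasing and convex in $s$. Conversely, given any smooth $G$, the equation $(P'^2)'=2e^{2s-G}$ is solved by
\[
P'(s)^2=\int_{-\infty}^{s}2e^{2u-G(u)}\,du,
\]
provided the integral converges. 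Then $P'>0$, and $P''=e^{2s-G}/P'>0$, so $P$ is strictly plurisubharmonic with $G_P=G$. The plan is therefore to choose a convex nondecreasing $G_j$ that agrees with $G_*$ near $S_j$ and is constant near $s=-\infty$. This gives $\operatorname{Ric}\geq0$, so the lemma holds with any $C\geq0$ independent of $j$. The only remaining task is the single matching condition making $P_j'=P_*'$ on the collar.

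\emph{Construction of $G_j$.} Put $s_j:=S_j-1$, which lies in the collar where $T\equiv0$ and where $G_*',G_*''>0$ by the asymptotics of $G_*$. Let $\ell_j(u):=G_*(s_j)+G_*'(s_j)(u-s_j)$ be the tangent line. For a parameter $c<G_*(s_j)$, let $G_{j,c}$ equal $G_*$ on $[s_j,\infty)$, and on $(-\infty,s_j]$ let it be a smoothed maximum of $\ell_j$ and the constant $c$, with the smoothing supported near the corner. Then $G_{j,c}$ is convex and nondecreasing, with slope between $0$ and $G_*'(s_j)\approx2(1-a)<2$. It is constant for $u\ll s_j$, so the integral defining $P'^2$ converges.

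\emph{Matching equation.} We need $M(c):=\int_{-\infty}^{s_j}2e^{2u-G_{j,c}}\,du=P_*'(s_j)^2$; then $P_j'=P_*'$ for $s\geq s_j$ automatically, since both solve the same ODE there with the same value at $s_j$. The function $M$ depends continuously and monotonically on $c$ (larger $c$ makes $G_{j,c}$ larger). As $c\to-\infty$, $M(c)$ tends to $\int_{-\infty}^{s_j}2e^{2u-\ell_j}$, which is finite since the slope of $\ell_j$ is below $2$. Convexity of $G_*$ gives $G_*\geq\ell_j$, and $P_*'\to0$ at $-\infty$, so this limit is at least $\int_{-\infty}^{s_j}2e^{2u-G_*}=P_*'(s_j)^2$. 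As $c\uparrow G_*(s_j)$ with the corner pushed close to $s_j$, $M(c)$ becomes smaller than this value, which I would verify by comparing $G_{j,c}$ with $G_*$. The intermediate value theorem then produces $c_j$.

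I expect this step to be the main obstacle: making the endpoint comparison rigorous uniformly in the smoothing. If it fails, the fallback is to use the location of the corner as the free parameter in place of $c$, which gives a similar monotone one-parameter family.

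\emph{Remaining properties.} Where $G_j\equiv c_j$ we get $P_j'^2=e^{2s-c_j}+\mathrm{const}$, and the constant vanishes by the choice of lower limit. So $P_j'=e^{-c_j/2}e^{s}$ and $P_j=e^{-c_j/2}|z|^2+\mathrm{const}$, which is Euclidean and smooth across the origin. Finally, the radial length of the cap is $\int^{s_j}\sqrt{P_j''}\,ds$, and the level-set diameters are governed by $P_j'$. Both are controlled by $P_j'(s_j)=P_*'(s_j)\to0$ together with the bounded slope of $G_j$ (a Cauchy--Schwarz estimate of $\int\sqrt{P''}$ against the exponential weight). Hence the capped region has diameter tending to zero as $S_j\to-\infty$.
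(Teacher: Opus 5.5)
Your proposal is correct in outline and takes a different route from the paper. The paper fixes one convex nondecreasing extension $\bar G_j$ that agrees with $G_*$ all the way down to $A_j=2S_j$. The defect in the matching integral then comes only from $s\le A_j$ and is $O(e^{2aA_j}|A_j|^{2K})$, negligible against the collar mass $e^{2aS_j}|S_j|^{2K}$. The paper restores the matching with a bump $\alpha_j\chi_j$ in the collar, $|\alpha_j|=O(e^{2aS_j})$, and accepts a possible loss of convexity whose Ricci cost is $O(e^{aS_j}|S_j|^{-K})$ relative to the metric. You instead stay inside the class of convex nondecreasing Ricci potentials and solve the matching condition by monotonicity in a parameter and the intermediate value theorem. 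This works because every convex extension lying above $G_*$ (such as a plateau) carries too little mass, so you need a competitor lying below $G_*$, and your tangent line supplies it. Your route gives $\operatorname{Ric}\geq0$ on the cap with no perturbative estimate. The paper's route needs only continuity in $\alpha_j$ and no comparison argument.

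Two points need repair. First, as written $G_{j,c}$ is not smooth at $s_j$. There $\ell_j$ agrees with $G_*$ only to first order while $G_*''(s_j)>0$, so $G_{j,c}$ is only $C^{1,1}$ and the Ricci form of $P_j$ jumps. Do not fix this with a regularized maximum of a line and $G_*$: that lies above $G_*$ and destroys your lower endpoint inequality. Instead build the extension at the level of the derivative, as the paper does:
\begin{itemize}
\item Choose a constant $m_1\in(G_*'(s_j-1),G_*'(s_j-\tfrac12))$.
\item Let $h$ be a smooth nondecreasing function on $(-\infty,s_j]$ with $h\geq G_*'$, $h=G_*'$ near $s_j$, and $h\equiv m_1$ on $(-\infty,s_j-1]$; a regularized maximum of $G_*'$ and $m_1$ will do.
\item Set $\tilde G(u)=G_*(s_j)-\int_u^{s_j}h$.
\end{itemize}
Then $\tilde G$ is smooth, convex and nondecreasing, has slope $m_1<2$ at $-\infty$, and satisfies $\tilde G\leq G_*$, strictly on $(-\infty,s_j-1]$. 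Use $\widetilde{\max}(\tilde G,c)$ as your family.

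Second, the endpoint comparison you flag as the main obstacle is elementary and needs no uniformity in the smoothing. Since the regularized maximum is at least $c$, you have $M(c)\leq e^{2s_j-c}$. On the other hand, $G_*'\geq2(1-a)$ on $(-\infty,s_j]$ gives $P_*'(s_j)^2\geq a^{-1}e^{2s_j-G_*(s_j)}$. Hence $M(c)<P_*'(s_j)^2$ already at $c=G_*(s_j)-\tfrac12\log(1/a)$, where the plateau level is a definite amount below $G_*(s_j)$. A fixed small smoothing width therefore keeps $G_{j,c}=G_*$ near $s_j$ for all $c\le G_*(s_j)-\tfrac12\log(1/a)$. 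The strict inequality $\tilde G<G_*$ makes the other endpoint strict, and the intermediate value theorem gives a finite $c_j$.
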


\begin{proof}
Set $A_j=2S_j$.  We first extend the Ricci potential rather than the K\"ahler potential.  Choose a smooth function $\bar G_j$ on $(-\infty,S_j]$ which is constant for $s\leq A_j-1$, equals $G_*$ on $[A_j+1,S_j]$, and satisfies $\bar G_j'\geq0$ and $\bar G_j''\geq0$ everywhere.  Such an extension is obtained by choosing $\bar G_j'$ to be a smooth nonnegative nondecreasing function which is zero on the far left and agrees with $G_*'$ near $A_j+1$; the positivity of $G_*'$ and $G_*''$ for large $j$ makes the interpolation possible.

Recovering $P$ from $\bar G_j$ with the smooth-center condition would not in general give exactly the required value of $P_*'(S_j)$.  The missing mass comes only from $s\leq A_j$ and is $O(e^{2aA_j}|A_j|^{2K})$.  Choose a fixed bump $\chi_j$ supported in $(S_j-3,S_j-2)$, where $\bar G_j=G_*$, and put $G_j=\bar G_j+\alpha_j\chi_j$.  A unit change of $\alpha_j$ changes the matching integral at scale $e^{2aS_j}|S_j|^{2K}$.  The intermediate value theorem therefore gives $|\alpha_j|=O(e^{2aS_j})$ and allows us to impose
\begin{equation}\label{eq:appendix-cap-matching}
2\int_{-\infty}^{S_j}e^{2s-G_j(s)}\,ds=P_*'(S_j)^2,
\qquad
P_j'(s)=\left(2\int_{-\infty}^{s}e^{2t-G_j(t)}\,dt\right)^{1/2}.
\end{equation}
Choose the additive constant in $P_j$ so that $P_j=P_*$ at the end of the matching collar.  Since $G_j=G_*$ on that collar and the first equality in \eqref{eq:appendix-cap-matching} matches $P'$, the first-order equation for $P'^2$ shows that the two radial metrics agree throughout the collar.

Where $G_j$ is constant, the second formula in \eqref{eq:appendix-cap-matching} gives $P_j'=c_je^s$, hence $P_j=c_je^s+\mathrm{const}$, and the cap is exactly Euclidean up to scale near the center.  Before adding $\alpha_j\chi_j$ we have $\bar G_j',\bar G_j''\geq0$ everywhere, so the only possible negative Ricci contribution comes from the tiny correcting bump.  On its support the negative parts of $G_j'$ and $G_j''$ are $O(|\alpha_j|)$, while $P_j'$ and $P_j''$ are comparable to $e^{aS_j}|S_j|^K$.  Thus the ratios of the negative Ricci eigenvalues to the corresponding metric eigenvalues are $O(e^{aS_j}|S_j|^{-K})\to0$, which gives a uniform lower Ricci bound.  The radial and angular sizes of the cap are $O(e^{aS_j/2}|S_j|^{K/2})$, so its diameter tends to zero.
\end{proof}

Replace the region $\{s<S_j\}$ by the cap, leaving the common outer metric unchanged.  The deleted region in the singular metric and the inserted smooth cap both have diameter tending to zero.  Identifying the common exterior and collapsing the cap to the completion point therefore gives local Gromov--Hausdorff convergence.  The oscillating region has nonnegative Ricci by Lemma~\ref{lem:appendix-slow-modulation}, and the caps have the uniform lower bound from Lemma~\ref{lem:appendix-cap}.

\subsection{Compactification and polarization}
\label{subsec:appendix-compactification}

We now place the local construction in a fixed projective surface.  Choose an affine coordinate chart $U\simeq B_{2r_2}(0)\subset\mathbb C^2$ centered at a point $p\in\mathbf{CP}^2$, and choose $0<r_1<r_2$ so small that the oscillating potential is strictly plurisubharmonic on $B_{2r_2}\setminus\{0\}$ and the entire cap construction takes place in $B_{r_1}$ for all large $j$.  Let $\phi_{\mathrm{FS}}$ be a Fubini--Study potential on $U$ and let $\phi_{\mathrm{in}}=e^F$ denote the common singular potential outside the caps.

The different radial growth rates make a fixed potential gluing possible.  Since $a<1$ and $\psi$ is bounded, $\phi_{\mathrm{in}}/|z|^2\to\infty$ as $z\to0$.  We may therefore choose radii $0<r_0<r_1<r_2$, a fixed constant $\lambda>0$, and additive constants so that $\lambda\phi_{\mathrm{in}}$ lies strictly above $\phi_{\mathrm{FS}}$ on a neighborhood of $\partial B_{r_0}$ and strictly below it on a neighborhood of $\partial B_{r_2}$.  Applying a regularized maximum on the fixed annulus $B_{r_2}\setminus\overline{B}_{r_0}$ produces a smooth strictly plurisubharmonic function which agrees with $\lambda\phi_{\mathrm{in}}$ near $\partial B_{r_0}$ and with $\phi_{\mathrm{FS}}$ near $\partial B_{r_2}$.  This interpolation is chosen once and is independent of $j$.

Scale the local caps by the same fixed factor $\lambda$ and use them inside $B_{r_0}$.  The resulting local metric $\widehat\omega_j$ agrees with the common oscillating metric on the annulus outside the cap, with the fixed regularized-maximum metric farther out, and with $\omega_{\mathrm{FS}}$ near $\partial B_{r_2}$.  We can therefore define a global form by taking $\omega_j=\widehat\omega_j$ on $B_{r_2}$ and $\omega_j=\omega_{\mathrm{FS}}$ on $\mathbf{CP}^2\setminus B_{r_2}$.  Since the two forms agree on a collar of the boundary, this gives a smooth global K\"ahler metric.

The polarization is unchanged.  On the coordinate ball, $\omega_j-\omega_{\mathrm{FS}}=\sqrt{-1}\partial\bar\partial u_j$ for a smooth function $u_j$ which vanishes near $\partial B_{r_2}$; extending $u_j$ by zero gives a global function on $\mathbf{CP}^2$.  Hence $[\omega_j]=[\omega_{\mathrm{FS}}]=c_1(\mathcal O_{\mathbf{CP}^2}(1))$.  A fixed integral rescaling gives the same construction in $c_1(\mathcal O_{\mathbf{CP}^2}(k))$ if desired.

The Ricci lower bound is uniform on the compactification.  The deep oscillating region has nonnegative Ricci, the caps have the lower bound from Lemma~\ref{lem:appendix-cap}, and the regularized-maximum annulus is one fixed smooth K\"ahler metric on a fixed compact set, so its Ricci tensor has one fixed lower bound.  Outside the chart the metric is Fubini--Study.  The global Gromov--Hausdorff convergence follows from the local convergence because the metrics are identical outside the shrinking caps.  For noncollapse, choose a fixed annulus sufficiently close to the completed point that its distance from $p$ is less than $1$ in the limiting local metric.  For all large $j$ the same annulus lies in $B_{\omega_j}(p,1)$, its metric is independent of $j$, and it has positive fixed volume.  After discarding finitely many terms this gives the constant $v$ in Theorem~\ref{thm:lower-ricci-nonunique}.

The tangent cones at $p$ are unaffected by the fixed outer interpolation.  By Subsection~\ref{subsec:appendix-modulation}, every $t\in[0,\varepsilon]$ occurs as the frozen phase of a tangent-cone subsequence, and Subsection~\ref{subsec:appendix-hopf} shows that these cones are pairwise nonisometric.  This proves Theorem~\ref{thm:lower-ricci-nonunique}.

\begin{remark}
The example is compatible with the lower-Ricci algebraic structure used throughout the paper.  All of the tangent cones above have underlying affine variety $\mathbb C^2$ and the same Hopf/Reeb action, while their transverse K\"ahler metrics differ.  Thus the uniqueness conclusion of Theorem~\ref{thm:main-log-ds} cannot be recovered from polarization, noncollapse, and a lower Ricci bound alone.
\end{remark}


\begin{thebibliography}{99}

\bibitem{AndersonRicci}
M.~T. Anderson,
\newblock Convergence and rigidity of manifolds under Ricci curvature bounds,
\newblock \emph{Invent. Math.} \textbf{102} (1990), no.~2, 429--445.

\bibitem{BedfordTaylorCapacity}
E.~Bedford and B.~A. Taylor,
\newblock A new capacity for plurisubharmonic functions,
\newblock \emph{Acta Math.} \textbf{149} (1982), no.~1--2, 1--40.

\bibitem{BirkesOrbits}
D.~Birkes,
\newblock Orbits of linear algebraic groups,
\newblock \emph{Ann. of Math. (2)} \textbf{93} (1971), no.~3, 459--475.

\bibitem{CheegerColdingI}
J.~Cheeger and T.~H. Colding,
\newblock On the structure of spaces with Ricci curvature bounded below. I,
\newblock \emph{J. Differential Geom.} \textbf{46} (1997), no.~3, 406--480.

\bibitem{CheegerColdingII}
J.~Cheeger and T.~H. Colding,
\newblock On the structure of spaces with Ricci curvature bounded below. II,
\newblock \emph{J. Differential Geom.} \textbf{54} (2000), no.~1, 13--35.

\bibitem{CheegerColdingIII}
J.~Cheeger and T.~H. Colding,
\newblock On the structure of spaces with Ricci curvature bounded below. III,
\newblock \emph{J. Differential Geom.} \textbf{54} (2000), no.~1, 37--74.

\bibitem{CheegerJiangNaber}
J.~Cheeger, W.~Jiang, and A.~Naber,
\newblock Rectifiability of singular sets of noncollapsed limit spaces with
Ricci curvature bounded below,
\newblock \emph{Ann. of Math. (2)} \textbf{193} (2021), no.~2, 407--538.

\bibitem{CheegerNaberQuant}
J.~Cheeger and A.~Naber,
\newblock Lower bounds on Ricci curvature and quantitative behavior of singular sets,
\newblock \emph{Invent. Math.} \textbf{191} (2013), no.~2, 321--339.

\bibitem{CDSI}
X.-X.~Chen, S.~Donaldson, and S.~Sun,
\newblock K\"ahler--Einstein metrics on Fano manifolds. I:
Approximation of metrics with cone singularities,
\newblock \emph{J. Amer. Math. Soc.} \textbf{28} (2015), no.~1, 183--197.

\bibitem{CDSII}
X.-X.~Chen, S.~Donaldson, and S.~Sun,
\newblock K\"ahler--Einstein metrics on Fano manifolds. II:
Limits with cone angle less than $2\pi$,
\newblock \emph{J. Amer. Math. Soc.} \textbf{28} (2015), no.~1, 199--234.

\bibitem{CDSIII}
X.-X.~Chen, S.~Donaldson, and S.~Sun,
\newblock K\"ahler--Einstein metrics on Fano manifolds. III:
Limits as cone angle approaches $2\pi$ and completion of the main proof,
\newblock \emph{J. Amer. Math. Soc.} \textbf{28} (2015), no.~1, 235--278.

\bibitem{ColdingNaberTangents}
T.~H. Colding and A.~Naber,
\newblock Characterization of tangent cones of noncollapsed limits with
lower Ricci bounds and applications,
\newblock \emph{Geom. Funct. Anal.} \textbf{23} (2013), no.~1, 134--148.

\bibitem{CollinsSzekelyhidi}
T.~C. Collins and G.~Sz\'ekelyhidi,
\newblock Sasaki--Einstein metrics and $K$-stability,
\newblock \emph{Geom. Topol.} \textbf{23} (2019), no.~3, 1339--1413.

\bibitem{DatarSzekelyhidi}
V.~Datar and G.~Sz\'ekelyhidi,
\newblock K\"ahler--Einstein metrics along the smooth continuity method,
\newblock \emph{Geom. Funct. Anal.} \textbf{26} (2016), 975--1010.

\bibitem{DSI}
S.~Donaldson and S.~Sun,
\newblock Gromov--Hausdorff limits of K\"ahler manifolds and algebraic geometry,
\newblock \emph{Acta Math.} \textbf{213} (2014), no.~1, 63--106.

\bibitem{DSII}
S.~Donaldson and S.~Sun,
\newblock Gromov--Hausdorff limits of K\"ahler manifolds and algebraic geometry. II,
\newblock \emph{J. Differential Geom.} \textbf{107} (2017), no.~2, 327--371.

\bibitem{FGSW}
X.~Fu, B.~Guo, J.~Song, and J.~Wang,
\newblock Fundamental groups of compact K\"ahler varieties with nef anti canonical bundle,
\newblock preprint, arXiv:2602.07420 (2026).

\bibitem{GuoPhongSongSturmSobolev}
B.~Guo, D.~H. Phong, J.~Song, and J.~Sturm,
\newblock Sobolev inequalities on K\"ahler spaces,
\newblock preprint, arXiv:2311.00221 (2023).

\bibitem{HaimanSturmfels}
M.~Haiman and B.~Sturmfels,
\newblock Multigraded Hilbert schemes,
\newblock \emph{J. Algebraic Geom.} \textbf{13} (2004), no.~4, 725--769.

\bibitem{HeSunFrankel}
W.~He and S.~Sun,
\newblock Frankel conjecture and Sasaki geometry,
\newblock \emph{Adv. Math.} \textbf{291} (2016), 912--960.

\bibitem{KollarRationalCurves}
J.~Koll\'ar,
\newblock \emph{Rational Curves on Algebraic Varieties},
\newblock Ergebnisse der Mathematik und ihrer Grenzgebiete, 3.~Folge,
vol.~32, Springer-Verlag, Berlin, 1996.

\bibitem{KollarHullsHusks}
J.~Koll\'ar,
\newblock Hulls and husks,
\newblock preprint, arXiv:0805.0576 (2008).

\bibitem{KollarFamiliesDivisors}
J.~Koll\'ar,
\newblock Families of divisors,
\newblock preprint, arXiv:1910.00937 (2019).

\bibitem{KollarFamiliesGeneralType}
J.~Koll\'ar,
\newblock \emph{Families of Varieties of General Type},
\newblock Cambridge Tracts in Mathematics, vol.~231,
Cambridge University Press, Cambridge, 2023.

\bibitem{ChiLiWeightedSolitons}
C.~Li,
\newblock Notes on weighted K\"ahler--Ricci solitons and application to
Ricci-flat K\"ahler cone metrics,
\newblock preprint, arXiv:2107.02088 (2021).

\bibitem{LLXGuidedTour}
C.~Li, Y.~Liu, and C.~Xu,
\newblock A guided tour to normalized volume,
\newblock in \emph{Geometric Analysis: In Honor of Gang Tian's 60th Birthday},
Progress in Mathematics, vol.~333,
Birkh\"auser, Cham, 2020, 167--219.

\bibitem{LiWangXu}
C.~Li, X.~Wang, and C.~Xu,
\newblock Algebraicity of the metric tangent cones and equivariant $K$-stability,
\newblock \emph{J. Amer. Math. Soc.} \textbf{34} (2021), no.~4, 1175--1214.

\bibitem{LiXuValuations}
C.~Li and C.~Xu,
\newblock Stability of valuations: higher rational rank,
\newblock \emph{Peking Math. J.} \textbf{1} (2018), no.~1, 1--79.

\bibitem{SLI}
G.~Liu and G.~Sz\'ekelyhidi,
\newblock Gromov--Hausdorff limits of K\"ahler manifolds with Ricci curvature bounded below,
\newblock \emph{Geom. Funct. Anal.} \textbf{32} (2022), no.~2, 236--279.

\bibitem{SLII}
G.~Liu and G.~Sz\'ekelyhidi,
\newblock Gromov--Hausdorff limits of K\"ahler manifolds with Ricci curvature bounded below. II,
\newblock \emph{Comm. Pure Appl. Math.} \textbf{74} (2021), no.~5, 909--931.

\bibitem{MartelliSparksYau}
D.~Martelli, J.~Sparks, and S.-T.~Yau,
\newblock Sasaki--Einstein manifolds and volume minimisation,
\newblock \emph{Comm. Math. Phys.} \textbf{280} (2008), no.~3, 611--673.

\bibitem{PerelmanNonunique}
G.~Perelman,
\newblock A complete Riemannian manifold of positive Ricci curvature with
Euclidean volume growth and nonunique asymptotic cone,
\newblock in K.~Grove and P.~Petersen (eds.),
\emph{Comparison Geometry},
Math. Sci. Res. Inst. Publ., vol.~30,
Cambridge University Press, Cambridge, 1997, 165--166.

\bibitem{SzekelyhidiRCD}
G.~Sz\'ekelyhidi,
\newblock Singular K\"ahler--Einstein metrics and RCD spaces,
\newblock \emph{Forum Math. Pi} \textbf{13} (2025), e24.

\bibitem{WangZhuBakryEmery}
F.~Wang and X.~Zhu,
\newblock The structure of spaces with Bakry--\'Emery Ricci curvature bounded below,
\newblock \emph{J. Reine Angew. Math.} \textbf{757} (2019), 1--50.

\end{thebibliography}
\end{document}